\let\OLDthebibliography\thebibliography
\renewcommand\thebibliography[1]{
  \OLDthebibliography{#1}
  \setlength{\parskip}{0pt}
  \setlength{\itemsep}{0pt plus 0.3ex} }
\numberwithin{equation}{section}
\theoremstyle{plain}
\newtheorem{theorem}{Theorem}[section]
\newtheorem{lemma}[theorem]{Lemma}
\theoremstyle{definition}
\newenvironment{remark}{\pushQED{\qed} \remarkbase}{\popQED\endremarkbase}
\newcommand{\N}{{\mathbb N}}
\newcommand{\R}{{\mathbb R}}
\newcommand{\C}{{\mathbb C}}
\newcommand{\Z}{\mathbb Z}
\newcommand{\T}{{\mathbb T}}
\newcommand{\mA}{\mathcal{A}}
\newcommand{\mB}{\mathcal{B}}
\newcommand{\mF}{\mathcal{F}}
\newcommand{\mL}{\mathcal{L}}
\newcommand{\mS}{\mathcal{S}}
\newcommand{\mR}{\mathcal{R}}
\newcommand{\mM}{\mathcal{M}}
\newcommand{\mN}{\mathcal{N}}
\newcommand{\mT}{\mathcal{T}}
\renewcommand{\a}{\alpha}
\renewcommand{\b}{\beta}
\newcommand{\g}{\gamma}
\renewcommand{\d}{\delta}
\newcommand{\D}{\Delta}
\newcommand{\ph}{\varphi}
\newcommand{\lm}{\lambda}
\newcommand{\Lm}{\Lambda}
\newcommand{\om}{\omega}
\newcommand{\p}{\pi}
\newcommand{\s}{\sigma}
\renewcommand{\t}{\tau}
\renewcommand{\th}{\vartheta}
\newcommand{\gr}{\nabla}
\newcommand{\la}{\langle}
\newcommand{\ra}{\rangle}
\newcommand{\inv}{^{-1}}
\newcommand{\pa}{\partial}
\def\ba{\begin{aligned}}
\def\ea{\end{aligned}}
\def\beginm{\begin{multline}}
\def\endm{\end{multline}}
\def\xC{\C} 
\def\xZ{\Z} 
\newcommand{\ttf}{\mN} 
\newcommand{\ttL}{\mathcal{L}}   
\title{Exact controllability for quasi-linear perturbations of KdV}
\author{\small{Pietro Baldi, Giuseppe Floridia, Emanuele Haus}}
\date{} 
\begin{document}
\maketitle

\begin{small}
\textbf{Abstract.}
We prove that the KdV equation on the circle 
remains exactly controllable in arbitrary time with localized control, 
for sufficiently small data,
also in presence of quasi-linear perturbations, 
namely nonlinearities containing up to three space derivatives,
having a Hamiltonian structure at the highest orders.
We use a procedure of reduction to constant coefficients up to order zero (adapting \cite{BBM-Airy}),
classical Ingham inequality and HUM method 
to prove the controllability of the linearized operator. 
Then we prove and apply a modified version of the Nash-Moser implicit function theorems 
by H\"ormander \cite{Geodesy, Olli}. 
\emph{MSC2010:} 35Q53, 35Q93.
\end{small}

\bigskip

\emph{Contents.}
\ref{sec:intro} Introduction ---
\ref{sec:regu} Reduction of the linearized operator to constant coefficients ---
\ref{sec:obs} Observability ---
\ref{section:con} Controllability ---
\ref{sec:proof} Proofs ---
\ref{sec:WP} Appendix A. Well-posedness of linear operators ---
\ref{sec:NM} Appendix B. Nash-Moser theorem --- 
\ref{sec:tame} Appendix C. Tame estimates.

\section{Introduction}
\label{sec:intro}

A question in control theory for PDEs
regards the persistence of controllability under perturbations. 
In this paper we study the effect of \emph{quasi-linear} perturbations 
(namely nonlinearities containing derivatives of the highest order) 
on the controllability of the KdV equation.
We consider equations of the form
\begin{equation} \label{i1}
u_t + u_{xxx} + \ttf(x,u, u_x, u_{xx}, u_{xxx}) = 0
\end{equation}
on the circle $x \in \T := \R / 2\pi\Z$, with $t \in \R$, 
where $u = u(t,x)$ is real-valued, and $\ttf$ is a given real-valued nonlinear function 
which is at least quadratic around $u=0$.
For solutions of small amplitude, 
\eqref{i1} is a quasi-linear perturbation of the Airy equation $u_t + u_{xxx} = 0$, 
which is the linear part of KdV; then the KdV nonlinear term $u u_x$ 
can be included in $\mN$. 

Motivated by a question, which was posed in \cite{KP}, about the possibility 
of including the dependence on higher derivatives in nonlinear perturbations of KdV, 
equations of the form \eqref{i1} have recently been studied 
in \cite{BBM-Airy, BBM-auto, BBM-mKdV} in the context of KAM theory. 
In this paper we study \eqref{i1} from the point of view of control theory,
proving its exact controllability by means of an internal control, 
in arbitrary time, for sufficiently small data (Theorem \ref{thm:1}).

\medskip

Most of the known results about controllability of quasi-linear PDEs 
deal with first order quasi-linear hyperbolic systems 
of the form $u_t + A(u) u_x = 0$ 
(including quasi-linear wave, shallow water, and Euler equations), 
see for example Li and Zhang \cite{Li-Zhang}, 
Coron \cite{Coron} (chapter 6.2, and see also the many references therein), 
Li and Rao \cite{Li-Rao}, Coron, Glass and Wang \cite{CGW}, 
and recently Alabau-Boussouira, Coron and Olive \cite{ACO}.
Recent results for different kinds of quasi-linear PDEs 
are contained in Alazard, Baldi and Han-Kwan \cite{ABH} 
on the internal controllability of 2D gravity-capillary water waves equations, 
and Alazard \cite{Alaz} on the boundary observability of 2D and 3D (fully nonlinear) 
gravity water waves.
For a general introduction to the theory of control for PDEs see, for example,
Lions \cite{Lions}, Micu and Zuazua \cite{Micu-Zuazua}, Coron \cite{Coron},
while for important results in control for hyperbolic PDEs see, for example, 
Bardos, Lebeau and Rauch \cite{BLR}, Burq and G\'erard \cite{BG}, Burq and Zworski \cite{BZ}.
 
Regarding the KdV equation, the first controllability results are due to 
Zhang \cite{Zhang} and Russell \cite{Russell}. 
Among recent results, we mention the work by Laurent, Rosier and Zhang \cite{LRZ} for large data. 
A beautiful review on the literature on control for KdV can be found in \cite{RZ}. 
For more on KdV, see the rich survey \cite{Guan-Kuksin} by Guan and Kuksin, 
and the many references therein.

\subsection{Main result}

We assume that the nonlinearity $\ttf(x,u,u_x, u_{xx}, u_{xxx})$ 
is at least quadratic around $u=0$, namely the real-valued function 
$\ttf : \T \times \R^4 \to \R$ satisfies 
\begin{equation} \label{i2}
|\ttf(x, z_0, z_1, z_2, z_3)| \leq C |z|^2 
\quad \forall z = (z_0, z_1, z_2, z_3) \in \R^4, \ |z| \leq 1.
\end{equation}
We assume that the dependence of $\ttf$ on $u_{xx}, u_{xxx}$ is Hamiltonian, 
while no structure is required on its dependence on $u, u_x$. More precisely, we assume that 
\begin{equation} \label{i2.1}
\ttf(x,u,u_x, u_{xx}, u_{xxx}) 
= \ttf_1(x,u,u_x, u_{xx}, u_{xxx}) + \ttf_0(x,u,u_x)
\end{equation}
where 
\begin{equation}  \label{i6}
\begin{aligned} 
& \ttf_1(x,u,u_x, u_{xx}, u_{xxx}) 
= \pa_x \{ (\pa_u \mF)(x, u, u_x) \} 
- \pa_{xx} \{ (\pa_{u_x} \mF)(x, u, u_x) \}
\\
& \text{for some function $\mF : \T \times \R^2 \to \R$.}
\end{aligned}
\end{equation}
Note that the case $\ttf = \ttf_1$, $\ttf_0 = 0$ corresponds to the Hamiltonian equation 
$\pa_t u = \pa_x \gr H(u)$ where the Hamiltonian is 
\begin{equation} \label{i3}
H(u) = \frac12 \int_\T u_x^2 \, dx + \int_\T \mF(x,u,u_x) \, dx
\end{equation}
and $\gr$ denotes the $L^2(\T)$-gradient. The unperturbed KdV is the case $\mF = - \frac16 u^3$. 

\medskip

\emph{Notations}. 
For periodic functions $u(x)$, $x \in \T$, we expand 
$u(x) = \sum_{n \in \Z} u_n e^{inx}$, 
and, for $s \in \R$, we consider the standard Sobolev space of periodic functions
\begin{equation} \label{i7}
H^s_x := H^s(\T, \R) := \big\{ u : \T \to \R : 
\| u \|_s < \infty \big\}, \quad 
\| u \|_s^2 := \sum_{n \in \Z} |u_n|^2 \langle n \rangle^{2s},
\end{equation}
where $\langle n \rangle := (1 + n^2)^{\frac12}$. 
We consider the space $C([0,T],H^s_x)$ of functions $u(t,x)$ that are continuous in time 
with values in $H^s_x$. 
We will use the following notation for the standard norm in $C([0,T], H^s_x)$:
\begin{equation} \label{i8}
\| u \|_{T,s} 
:= \| u \|_{C([0,T],H^s_x)} := \sup_{t \in [0,T]} \| u(t) \|_s.
\end{equation}
For continuous functions $a : [0,T] \to \R$, we will denote 
\begin{equation} \label{1609.1}
| a |_T := \sup \{ |a(t)| : t \in [0,T] \}.
\end{equation}

\bigskip

\bigskip

\bigskip

\begin{theorem}[Exact controllability] \label{thm:1}
Let $T>0$, and let $\om \subset \T$ be a nonempty open set. 
There exist positive universal constants $r,s_1$ such that, 
if $\mN$ in \eqref{i1} is of class $C^r$ in its arguments 
and satisfies \eqref{i2}, \eqref{i2.1}, \eqref{i6}, 
then there exists a positive constant $\d_*$ depending on $T,\om,\mN$
with the following property. 

Let $u_{in}, u_{end} \in H^{s_1}(\T,\R)$ 
with 
\[
\| u_{in} \|_{s_1} + \| u_{end} \|_{s_1} \leq \d_*.
\] 
Then there exists a function $f(t,x)$ satisfying
\[
f(t,x) = 0 \quad \text{for all $x \notin \om$, for all $t \in [0,T]$,}
\]
belonging to $C([0,T],H^s_x) 
\cap C^1([0,T],H^{s-3}_x) 
\cap C^2([0,T],H^{s-6}_x)$
for all $s < s_1$,
such that the Cauchy problem
\begin{equation} \label{i9}
\begin{cases}
u_t + u_{xxx} + \ttf(x,u,u_x, u_{xx}, u_{xxx}) = f 
\quad \forall (t,x) \in [0,T] \times \T \\
u(0,x) = u_{in}(x) 
\end{cases}
\end{equation}
has a unique solution $u(t,x)$ belonging to $C([0,T], H^s_x) \cap C^1([0,T], H^{s-3}_x)
\cap C^2([0,T],H^{s-6}_x)$
for all $s < s_1$, which satisfies 
\begin{equation} \label{i10}
u(T,x) = u_{end}(x).
\end{equation}
Moreover, for all $s < s_1$,
\begin{multline} \label{stimetta}
\| u,f \|_{C([0,T],H^s_x)} + \| \pa_t u, \pa_t f \|_{C([0,T],H^{s-3}_x)} 
+ \| \pa_{tt} u, \pa_{tt} f \|_{C([0,T],H^{s-6}_x)} 
\\
\leq C_s (\| u_{in} \|_{s_1} + \| u_{end} \|_{s_1})
\end{multline}
for some $C_s > 0$ depending on $s,T,\om,\mN$. 
\end{theorem}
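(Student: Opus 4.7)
The plan is to cast exact controllability as the solvability of a nonlinear functional equation $F(u,f)=0$, where $F$ encodes both the evolution \eqref{i9} and the final condition \eqref{i10}, and then solve it by a Nash--Moser iteration, built on a controllability result for the linearized operator. More precisely, given a target pair $(u_{in},u_{end})$, I would seek $(u,f)$ with $f$ supported in $\omega$ and such that $u$ starts at $u_{in}$ and ends at $u_{end}$. Because the nonlinearity $\ttf$ contains three $x$-derivatives, the linearized operator
\[
\mL(u) h := h_t + h_{xxx} + (\pa_{u_{xxx}}\ttf)(x,u,\dots)\, h_{xxx} + (\pa_{u_{xx}}\ttf)\, h_{xx} + (\pa_{u_x}\ttf)\, h_x + (\pa_u \ttf)\, h
\]
loses derivatives when inverted, so a standard implicit function theorem is inadequate; the correct tool is a Nash--Moser scheme with smoothing operators, in the version of H\"ormander recalled in Appendix B.

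The central analytical input for the scheme is a right inverse for $\mL(u)$ with tame bounds that is supported, in its $x$-variable, in $\omega$. To construct it I would follow Section \ref{sec:regu} and, for small $u$, conjugate $\mL(u)$ by a chain of changes of variable (a diffeomorphism of $\T$ straightening the leading symbol, a reparametrization of time, and a multiplication operator) so that the top three orders become constant in $x$. The Hamiltonian hypothesis \eqref{i6} is used precisely here: it guarantees that, after the first change of variable, the coefficient of $h_{xx}$ vanishes, so no skew-symmetrization is needed and the reduced operator is a purely dispersive $\pa_t + \mathrm{m}(t)\pa_{xxx}$ plus a zeroth order remainder with tame coefficients. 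For the reduced constant-coefficient operator one produces a right inverse supported in a suitable open subset of $\omega$ by the HUM method, whose key ingredient is a linear observability inequality; this is proved in Section \ref{sec:obs} via a classical Ingham inequality applied to the eigenvalues $n^3$ of $\pa_{xxx}$ on $\T$ (this is where arbitrary time $T>0$ enters, since the gap $|n^3-m^3|\to\infty$). Undoing the changes of variable, one recovers a right inverse $\Psi(u)$ for $\mL(u)$ satisfying tame estimates $\|\Psi(u)g\|_{T,s}\lesssim \|g\|_{T,s+\mu}+\|u\|_{T,s+\mu}\|g\|_{T,s_0+\mu}$ for some fixed loss $\mu$, together with an analogous bound for $\partial_t$ and $\partial_{tt}$, using also the well-posedness result of Appendix \ref{sec:WP}.

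With $\Psi(u)$ in hand, the Nash--Moser iteration is set up as in Appendix \ref{sec:NM}: at each step one solves the linearized control problem
\[
\mL(u_n)\, h_n = -F(u_n,f_n) + \text{remainder}, \qquad h_n\big|_{t=0}=u_{in}-u_n(0),\ h_n\big|_{t=T}=u_{end}-u_n(T),
\]
with the right-hand side truncated at frequency $N_n\to\infty$, updates $(u_{n+1},f_{n+1})=(u_n+h_n,f_n+\text{correction})$, and estimates the quadratic remainder using the tame estimates for $\ttf$ derived in Appendix \ref{sec:tame}. The standard Nash--Moser loss-compensation mechanism yields convergence in $C([0,T],H^s_x)$ for $s<s_1$ provided $\|u_{in}\|_{s_1}+\|u_{end}\|_{s_1}$ is small enough; time differentiation of \eqref{i9} together with the tame estimates transfers the regularity into $C^1$ and $C^2$ in $t$ with the announced loss of $3$ and $6$ derivatives. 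Uniqueness at the final regularity, and the bound \eqref{stimetta}, follow from the quantitative nature of the Nash--Moser estimates.

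The step I expect to be the main obstacle is the construction of $\Psi(u)$ with control localized in $\omega$ while simultaneously keeping tame estimates. The conjugations used to reduce to constant coefficients are global in $x$, so after reducing, inverting via HUM, and conjugating back, one must verify that the resulting control can be cut off to the prescribed set $\omega$ without destroying either the observability inequality or the tame bounds; this requires choosing the smaller open subset of $\omega$ in which the reduced control lives so that its image under the $x$-diffeomorphism still lies in $\omega$, and tracking the dependence of all constants on $u$ carefully enough to close the Nash--Moser loop.
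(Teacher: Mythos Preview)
Your overall strategy matches the paper's: formulate controllability as $\Phi(u,f)=(P(u)-\chi_\omega f,\,u(0),\,u(T))=(0,u_{in},u_{end})$, build a tame right inverse for the linearized operator via reduction to constant coefficients plus observability/HUM, and close with the H\"ormander-type Nash--Moser theorem of Appendix~\ref{sec:NM}. The concern you flag about localization is handled in the paper exactly as you anticipate: one works on a slightly smaller interval $\omega_1\subset\omega$ so that the space diffeomorphism and translation map it back into $\omega$ (Lemmas~\ref{lemma:Obs4} and~\ref{lemma:Obs6}).

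There is, however, a substantive inaccuracy in your account of the reduction. The Hamiltonian hypothesis~\eqref{i6} does \emph{not} make the coefficient of $h_{xx}$ vanish after the space diffeomorphism. What it gives is $a_2=2\partial_x a_3$, and after Step~1 (the change of space variable) the second-order coefficient $a_5$ is still present but has \emph{zero spatial mean} (see~\eqref{m.4}). Two further steps are then needed: the time reparametrization (Step~2) makes the leading coefficient a genuine constant $m$ rather than $m(t)$, and only then does a multiplication operator (Step~3) kill the second-order term, precisely because the zero-mean condition allows one to solve $3mq_y+a_8 q=0$ periodically. Your sketch also omits Steps~4--5, which remove the first-order term; without them the remainder is order one, not order zero, and the perturbative observability argument (Lemma~\ref{lemma:Obs2}) would not apply directly.

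A minor structural difference: the paper proves observability successively for $\mL_5,\mL_4,\ldots,\mL_0$ and then applies HUM directly to $\mL_0$ with the physical cutoff $\chi_\omega$, rather than applying HUM at the reduced level and conjugating the control back. This sidesteps part of the localization worry you raise, since the control is born already multiplied by $\chi_\omega$; the conjugations reappear only in the higher-regularity argument (Lemma~\ref{lemma:Con2}). Uniqueness of $u$ for given $f$ is proved separately at the end of Section~\ref{subsec:proof thm 1} by writing $P(u)-P(v)$ as a variable-coefficient linear operator applied to $u-v$ and invoking Lemma~\ref{lemma:App7}.
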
 

\begin{remark} \label{rem:fictitious}
In Theorem \ref{thm:1} there is an arbitrarily small loss of regularity: 
if the initial and final data $u_{in}, u_{end}$ have Sobolev regularity $H^{s_1}_x$, 
then the control $f$ and the solution $u$ are continuous in time with values in $H^s_x$ for all $s<s_1$. Such loss of regularity is in some sense fictitious: it is due to our choice of working 
with standard Sobolev spaces, but it could be avoided by working with the (slightly ``worse-looking'') weak spaces $E_a'$ introduced by H\"ormander in \cite{Olli} 
(see Section \ref{sec:NM}). What we actually prove is that, if the initial and final data 
are in the weak space $(H^{s_1}_x)'$ (i.e.\ the weak version \emph{\`a la} H\"ormander \cite{Olli} 
of the Sobolev space $H^{s_1}_x$), then $f$ and $u$ are continuous in time with values 
in \emph{the same} space $(H^{s_1}_x)'$. 
\end{remark}

\begin{remark}
Our proof of Theorem \ref{thm:1} does not use results of existence and uniqueness for the Cauchy problem \eqref{i9}. On the contrary, our method directly proves local existence and uniqueness for \eqref{i9} (see Theorem \ref{thm:byproduct}). 
This situation occurs quite often in control problems (see Remark 4.12 in \cite{Coron}).
\end{remark}

\subsection{Description of the proof}

It would be natural to try to solve the control problem \eqref{i9}-\eqref{i10} using 
a fixed point argument or the usual implicit function theorem. 
However, this seems to be impossible because of the presence of three derivatives in the nonlinear term. 
A similar difficulty was overcome in \cite{ABH} by using a suitable nonlinear iteration scheme
adapted to quasi-linear problems. 
Such a nonlinear scheme requires to solve a linear control problem with variable coefficients
at each step of the iteration, with no loss of regularity with respect to the coefficients
(i.e., the solution must have the same regularity as the coefficients).
In \cite{ABH} this is achieved by means of para-differential calculus, 
together with linear transformations, Ingham-type inequalities and the Hilbert uniqueness method.

As an alternative method, in this paper we use a Nash-Moser implicit function theorem.
The Nash-Moser approach also demands to solve a linear control problem with variable coefficients,
but it has the advantage of requiring weaker estimates, allowing losses of regularity.
The proof of such weaker estimates is easier to obtain, 
and it does not require the use of powerful techniques like para-differential calculus. 
In this sense our Nash-Moser method is alternative to the method in \cite{ABH}
(for a discussion about pseudo- and para-differential calculus in connection with the Nash-Moser theorem, see, for example, H\"ormander \cite{H-90}, Alinhac and G\'erard \cite{AG}).
On the other hand, the result that we obtain with the Nash-Moser method 
is slightly weaker than the one in \cite{ABH} regarding the regularity of the solution of the nonlinear control problem with respect to the regularity of the data: 
the arbitrarily small loss of regularity in Theorem \ref{thm:1} is discussed in Remark \ref{rem:fictitious}, while Theorem 1.1 of \cite{ABH} 
has no loss of regularity also in the standard Sobolev spaces.

Nash-Moser schemes in control problems for PDEs have been used  
by Beauchard, Coron, Alabau-Boussouira, Olive in \cite{Beau1, BC-JFA-2006, Beau2, ACO}.
A discussion about Nash-Moser as a method to overcome the problem of the loss of derivatives 
in the context of controllability for PDEs can be found in \cite[section 4.2.2]{Coron}.
In \cite{BL} Beauchard and Laurent were able to avoid the use of the Nash-Moser theorem 
in semilinear control problems thanks to some regularizing effect.
We remark that Theorem \ref{thm:1} could also be proved without Nash-Moser 
(for example, by adapting the method of \cite{ABH}). 

\medskip

Now we describe our method in more detail.
Given a nonempty open set $\om \subset \T$, 
we first fix a $C^\infty$ function $\chi_\om(x)$ 
with values in the interval $[0,1]$ which vanishes outside $\om$, 
and takes value $\chi_\om = 1$ on a nonempty open subset of $\om$. 
Thus, given initial and final data $u_{in}, u_{end}$, 
we look for $u,f$ that solve
\begin{equation} \label{ge1}
\begin{cases}
P(u) = \chi_\om f \\
u(0) = u_{in} \\
u(T) = u_{end} 
\end{cases}
\end{equation}
where
\begin{equation} \label{ge2}
P(u) := u_t + u_{xxx} + \ttf(x,u,u_x,u_{xx}, u_{xxx}).
\end{equation}
We define 
\begin{equation} \label{ge3}
\Phi(u,f) := \begin{pmatrix} 
P(u) - \chi_\om f \\
u(0) \\
u(T) \end{pmatrix}
\end{equation}
so that problem \eqref{ge1} is written as 
\[
\Phi(u,f) = (0, u_{in}, u_{end}).
\] 
The crucial assumption to verify in order to apply any Nash-Moser theorem 
is the existence of a right inverse of the linearized operator. 
The linearized operator $\Phi'(u,f)[h,\ph]$ at the point $(u,f)$ in the direction $(h,\ph)$ is
\begin{equation} \label{ge4}
\Phi'(u,f)[h,\ph] := \begin{pmatrix} 
P'(u)[h] - \chi_\om \ph \\
h(0) \\
h(T) \end{pmatrix}.
\end{equation}
Thus we have to prove that, given any $(u,f)$ and any $g := (g_1, g_2, g_3)$ 
in suitable function spaces, there exists $(h,\ph)$ such that 
\begin{equation}  \label{1410}
\Phi'(u,f)[h,\ph] = g.
\end{equation}
Moreover we have to estimate $(h,\ph)$ in terms of $u,f,g$ in a ``tame'' way 
(an estimate is said to be tame when it is linear in the highest norms: see \eqref{tame in NM} and \eqref{2809}).

Problem \eqref{1410} is a linear control problem. 
We observe that the linearized operator $P'(u)[h]$ 
is a differential operator having variable coefficients also at the highest order 
(which is a consequence of linearizing a \emph{quasi-linear} PDE).
Explicitly, it has the form 
\[
P'(u)[h] = \pa_t h + (1 + a_3(t,x)) \pa_{xxx} h 
+ a_2(t,x) \pa_{xx} h 
+ a_1(t,x) \pa_x h 
+ a_0(t,x) h.
\]
We solve \eqref{1410} 
in Theorem \ref{thm:inv}. 
Note that the choice of the function spaces is not given a priori: 
to fix a suitable functional setting is part of the problem. 

Theorem \ref{thm:inv} is proved by adapting a procedure of reduction to constant coefficients developed in \cite{BBM-Airy, BBM-auto}. Such a procedure conjugates $P'(u)$  
to an operator $\mL_5$ (see \eqref{L34}) having constant coefficients up to a bounded remainder.
This conjugation is achieved by means of changes of the space variable, reparametrization of time, multiplication operators, and Fourier multipliers. 
Using Ingham inequality and a perturbation argument we prove 
the observability of $\mL_5$. 
Then we prove the observability of $P'(u)$ exploiting the explicit formulas of the transformations that conjugate $P'(u)$ to $\mL_5$. 
The linear control problem \eqref{1410} is solved in $L^2_x$ 
by the HUM (Hilbert uniqueness method). 
Then further regularity of the solution $(h,\ph)$ of \eqref{1410} 
is proved by adapting an argument used 
by Dehman-Lebeau \cite{Dehman-Lebeau}, Laurent \cite{Laurent}, and \cite{ABH}.

To conclude the proof of Theorem \ref{thm:1} we apply Theorem \ref{thm:NM}, 
which is a modified version of two Nash-Moser implicit function theorems by H\"ormander 
(Theorem 2.2.2 in \cite{Geodesy} and main theorem in \cite{Olli}; 
see also Alinhac-G\'erard \cite{AG}). 
With respect to the abstract theorem in \cite{Olli}, 
our Theorem \ref{thm:NM} assumes slightly stronger hypotheses on the nonlinear operator, 
and it removes two conditions that are assumed in \cite{Olli}, 
which are the compact embeddings in the codomain scale of Banach spaces  
and the continuity of the approximate right inverse of the linearized operator 
with respect to the approximate linearization point. 
This improvement is obtained by adapting the iteration scheme introduced in \cite{Geodesy}.
On the other hand, the Nash-Moser implicit function theorem in \cite{Geodesy} 
holds for H\"older spaces with noninteger indices, and it does not apply to Sobolev spaces 
(in particular, Theorem A.11 of \cite{Geodesy} does not hold 
for Sobolev spaces). 

This method is not confined to KdV,  
and it could be applied to prove controllability of other quasi-linear evolution PDEs.

\medskip

The use of Ingham-type inequalities and HUM is classical in control theory
(see, for example, \cite{H Lacun, Micu-Zuazua, Komornik-Loreti, Kahane} for Ingham 
and \cite{Lions, Micu-Zuazua, Coron, Komornik} for HUM).
As mentioned above, the Nash-Moser theorem has also been used in control theory 
(see, for example, \cite{Beau1, BC-JFA-2006, Beau2, ACO}). 
It was first introduced by Nash \cite{Nash}, then several 
refinements were developed afterwards,  
see for example Moser \cite{Moser}, Zehnder \cite{Zehnder},
Hamilton \cite{Hamilton}, Gromov \cite{Gromov}, H\"ormander \cite{Geodesy, Olli, H-90}, 
and, recently, Berti, Bolle, Corsi and Procesi \cite{BBPro, BCPro}, 
Ekeland and S\'er\'e \cite{Ekeland, ES}. 
For our problem, H\"ormander's versions \cite{Geodesy, Olli} 
seem to be the best ones concerning the loss of regularity of the solution 
with respect to the regularity of the data (see also Remark \ref{rem:fictitious}).
As already said, the theorems in \cite{Geodesy, Olli} cannot be applied directly, 
but they can be adapted to our goal. 
This is the content of Section \ref{sec:NM}.

\subsection{Byproduct: a local existence and uniqueness result}
As a byproduct, with the same technique and no extra work, 
we have the following existence and uniqueness theorem for the Cauchy problem 
of the quasi-linear PDE \eqref{i1}.

\begin{theorem}[Local existence and uniqueness] \label{thm:byproduct}
There exist positive universal constants $r,s_0$ such that, 
if $\mN$ in \eqref{i1} is of class $C^r$ in its arguments 
and satisfies \eqref{i2}, \eqref{i2.1}, \eqref{i6}, 
then the following property holds.
For all $T > 0$ there exists $\d_* > 0$ such that
for all $u_{in} \in H^{s_0}_x$, $f \in C([0,T], H^{s_0}_x) \cap C^1([0,T], H^{s_0-6}_x)$ 
(possibly $f=0$) satisfying
\begin{equation} \label{i12}
\| u_{in} \|_{s_0} + \| f \|_{T,s_0} + \| \pa_t f \|_{T,s_0-6} \leq \d_* \,,
\end{equation}
the Cauchy problem 
\begin{equation} \label{i11}
\begin{cases}
u_t + u_{xxx} + \ttf (x,u,u_x, u_{xx}, u_{xxx}) = f, 
\qquad (t,x) \in [0,T] \times \T \\
u(0,x) = u_{in}(x) 
\end{cases}
\end{equation}
has one and only one solution $u \in C([0,T], H^s_x) \cap C^1([0,T], H^{s-3}_x) \cap C^2([0,T], H^{s-6}_x)$ for all $s< s_0$.
Moreover, for all $s < s_0$,
\begin{multline} \label{stimetta bis}
\| u \|_{C([0,T],H^s_x)} + \| \pa_t u \|_{C([0,T],H^{s-3}_x)} 
+ \| \pa_{tt} u \|_{C([0,T],H^{s-6}_x)} 
\\
\leq C_s \Big( \| u_{in} \|_{s_0} + \| f \|_{C([0,T],H^{s_0}_x)}
+ \| \pa_t f \|_{C( [0,T],H^{s_0-6}_x)} \Big)
\end{multline}
for some $C_s > 0$ depending on $s,T,\mN$. 
\end{theorem}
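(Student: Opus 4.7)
The plan is to mirror the Nash-Moser argument used to prove Theorem \ref{thm:1}, but applied to the simpler nonlinear operator
\[
\tilde\Phi(u) := \bigl(P(u),\, u(0)\bigr),
\]
where $P$ is defined in \eqref{ge2}. The equation to solve is $\tilde\Phi(u) = (f, u_{in})$; relative to the control setting we have removed the final-time condition $u(T) = u_{end}$, and there is no unknown control $\ph$, so in particular no observability/HUM step is needed here.

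First I would set up the same scale of Banach spaces (functions continuous in time with values in $H^s_x$, together with their time derivatives in $H^{s-3}_x$ and $H^{s-6}_x$) as used for Theorem \ref{thm:1}, and check that $\tilde\Phi$ satisfies the tame estimates required by the abstract Nash-Moser theorem (Theorem \ref{thm:NM}). Tameness of the Nemytskii-type operator $u \mapsto \ttf(x,u,u_x,u_{xx},u_{xxx})$ follows from the Moser-type estimates proved in Appendix \ref{sec:tame}; the quadratic bound \eqref{i2} guarantees that $\tilde\Phi$ vanishes to second order at $u=0$, providing the smallness at the initial step of the iteration.

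The main step is to prove that for $u$ in a small neighborhood of zero the linearized operator
\[
\tilde\Phi'(u)[h] = \bigl(P'(u)[h],\, h(0)\bigr)
\]
admits a tame right inverse: given $(g_1,g_2)$, one must solve the linear forward Cauchy problem
\begin{equation*}
\begin{cases}
\pa_t h + (1+a_3(t,x))\pa_{xxx}h + a_2 \pa_{xx}h + a_1 \pa_x h + a_0 h = g_1, \\
h(0) = g_2,
\end{cases}
\end{equation*}
with a tame estimate on $h$ in terms of $(g_1,g_2)$ and the Sobolev norms of $u$. I would apply the reduction-to-constant-coefficients procedure of Section \ref{sec:regu} to conjugate $P'(u)$ to the operator $\mL_5$ of \eqref{L34}, whose principal part is $\pa_t + \pa_{xxx}$ plus a remainder of order zero. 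For $\mL_5$ the forward Cauchy problem is well-posed in every $H^s_x$ by a standard $L^2$-energy estimate combined with Gr\"onwall and Duhamel (the content of Appendix \ref{sec:WP}); unwinding the conjugation through the explicit changes of variable, reparametrization, multiplications and Fourier multipliers gives the desired right inverse of $\tilde\Phi'(u)$.

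The main technical obstacle will be the bookkeeping of derivative losses: one must verify that this right inverse loses only a fixed finite number of derivatives with respect to $(g_1,g_2)$ and depends tamely on high norms of $u$, in the precise sense demanded by Theorem \ref{thm:NM}. Once this is checked, Theorem \ref{thm:NM} produces a solution $u$ with the claimed regularity and the tame bound \eqref{stimetta bis}. Uniqueness is not given directly by the Nash-Moser scheme and has to be argued separately: if $u^{(1)},u^{(2)}$ both solve \eqref{i11}, then $h = u^{(1)} - u^{(2)}$ satisfies a linear Cauchy problem of the above form (with coefficients obtained by integrating the linearization along a segment joining $u^{(1)}$ and $u^{(2)}$), with zero source and zero initial datum, whence the $L^2$-energy estimate for the reduced operator forces $h \equiv 0$.
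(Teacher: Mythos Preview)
Your proposal is correct and follows essentially the same route as the paper: define $\tilde\Phi(u)=(P(u),u(0))$, obtain the tame right inverse of $\tilde\Phi'(u)$ by solving the linear Cauchy problem for $P'(u)$ via the reduction to $\mL_5$ (Section~\ref{sec:regu} and Appendix~\ref{sec:WP}, culminating in Lemma~\ref{lemma:App7}), apply the Nash--Moser Theorem~\ref{thm:NM}, and prove uniqueness by writing $u^{(1)}-u^{(2)}$ as the solution of a linear $\mL_0$-type problem with zero data. The only cosmetic slip is that the reduced principal part is $\pa_t + m\,\pa_{xxx}$ with $m$ close to $1$, not $\pa_t + \pa_{xxx}$.
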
 

\begin{remark}
Theorem \ref{thm:byproduct} is not sharp: 
we expect that better results for the Cauchy problem \eqref{i11} can be proved 
by using a para-differential approach. 
\end{remark}

\begin{remark} The loss of regularity in Theorem \ref{thm:byproduct} 
is of the same type as the one in Theorem \ref{thm:1}, 
see the discussion in Remark \ref{rem:fictitious}.
\end{remark}

\subsection{Organization of the paper}

In Section \ref{sec:regu} we describe the transformations that conjugate the linearized operator $P'(u)$ to constant coefficients up to a bounded remainder, and we give quantitative estimates on these transformations. 
In Section \ref{sec:obs} we exploit these results to prove the observability of $P'(u)$.
In Section \ref{section:con} we use observability to solve the linear control problem \eqref{1410} via HUM (Theorem \ref{thm:inv}) and we fix suitable function spaces \eqref{def Es}-\eqref{def Fs}.
In Section \ref{sec:proof} we prove Theorems \ref{thm:1} and \ref{thm:byproduct} 
by applying Theorem \ref{thm:NM}.
In Section \ref{sec:WP} we prove well-posedness with tame estimates for all the linear operators 
involved in the reduction procedure.
These well-posedness results are used many times along the Sections \ref{sec:obs}, \ref{section:con}, \ref{sec:proof}. 
In Section \ref{sec:NM} we prove Nash-Moser Theorem \ref{thm:NM}. 
In Section \ref{sec:tame} we recall standard tame estimates that are used in the rest of the paper.

\bigskip

\begin{small}
\noindent
\textbf{Acknowledgements}.
We thank Thomas Alazard and Daniel Han-Kwan for useful comments and discussions.
We thank the anonymous referee for pointing out a defect, 
which now has been corrected, 
and for other useful suggestions. 

This research was carried out in the frame of Programme \textsc{star}, 
financially supported by UniNA and Compagnia di San Paolo.
This research was supported by the European Research Council under FP7 (\textsc{erc} Project 306414),
by \textsc{prin} 2012 ``Variational and perturbative aspects of nonlinear differential problems'',
by \textsc{indam-gnampa} Research Project 2015 
``Analisi e controllo di equazioni a derivate parziali nonlineari'', 
by \textsc{gdre conedp} ``Control of Partial Differential Equations'' issued by \textsc{cnrs}, 
\textsc{indam} and Universit\'e de Provence. 
\end{small}

\section{Reduction of the linearized operator to constant coefficients} 
\label{sec:regu}

In this section we consider some changes of variables that conjugate the linearized operator 
to constant coefficients up to a bounded remainder. 
This reduction procedure closely follows the analysis in \cite{BBM-Airy} and \cite{BBM-auto}, 
with some adaptations. 

The linearized operator $P'(u)$ is
\begin{equation} \label{L1}
P'(u)[h] = \pa_t h + (1 + a_3) \pa_{xxx} h + a_2  \pa_{xx} h + a_1 \pa_{x} h + a_0 h,
\end{equation}
where the coefficients $a_i = a_i(t,x)$, $i=0,\ldots,3$ 
are real-valued functions of $(t,x) \in [0,T] \times \T$, 
depending on $u$ by 
\begin{equation} \label{ai}
a_i = a_i(u) := (\pa_{z_i} \ttf)(x,u,u_x, u_{xx}, u_{xxx}), \quad i = 0,\ldots,3
\end{equation}
(recall the notation $\ttf = \ttf(x,z_0,z_1,z_2,z_3)$).
Note that $a_2 = 2 \pa_x a_3$ because of the Hamiltonian structure of the component $\ttf_1$ 
of the nonlinearity (see \eqref{i2.1}-\eqref{i6}).

\begin{lemma}  \label{lemma:stime ai} 
Let $ \ttf \in C^r(\T \times \R^4, \R)$ satisfying \eqref{i2}. 
For all $1 \leq s \leq r - 3$, and for all $u \in C^2 ([0,T], H^{s+3}_x)$ such that $\| u, \pa_t u, \pa_{tt} u \|_{T,4} \leq 1$, the coefficients $a_i(u)$ satisfy 
\begin{equation} \label{L2}
\| a_i(u), \pa_t a_i(u), \pa_{tt} a_i(u) \|_{T,s} 
\leq C \| u, \pa_t u, \pa_{tt} u \|_{T,s+3}, \qquad i=0,1,2,3.
\end{equation}
\end{lemma}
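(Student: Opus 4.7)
The plan is to view \eqref{L2} as a direct consequence of two standard ingredients collected in Appendix \ref{sec:tame}: the tame composition estimate for $C^r$ Nemytskii operators on Sobolev spaces, and the tame product rule $\|fg\|_s \leq C(\|f\|_s \|g\|_{L^\infty} + \|f\|_{L^\infty} \|g\|_s)$. Once $a_i(u)$, $\pa_t a_i(u)$, and $\pa_{tt} a_i(u)$ are expanded via the chain rule into finite sums of products of compositions of partial derivatives of $\ttf$ with partial derivatives of $u$, each summand falls within reach of these two estimates. The low-norm hypothesis $\|u,\pa_t u,\pa_{tt} u\|_{T,4} \leq 1$, together with the Sobolev embedding $H^4(\T) \hookrightarrow W^{3,\infty}(\T)$, provides the uniform $L^\infty$ control on $u, u_x, u_{xx}, u_{xxx}$ and their time derivatives that is required by both estimates.

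For $a_i(u)$ itself, condition \eqref{i2} implies $(\pa_{z_i}\ttf)(x,0,0,0,0)=0$, and $\pa_{z_i}\ttf$ is of class $C^{r-1}$; applying the tame composition estimate pointwise in $t$ then yields $\|a_i(u)(t,\cdot)\|_s \leq C\|u(t,\cdot)\|_{s+3}$ for every $s \leq r-3$. Differentiating in time,
\[
\pa_t a_i(u) = \sum_{j=0}^{3} (\pa_{z_j}\pa_{z_i}\ttf)(x,u,u_x,u_{xx},u_{xxx})\, \pa_x^j\pa_t u.
\]
The composition in front of $\pa_x^j\pa_t u$ is estimated exactly as before (now costing one derivative of $\ttf$), and the tame product rule, combined with the smallness of $\pa_t u$ in the low norm provided by the hypothesis, gives $\|\pa_t a_i(u)\|_{T,s} \leq C(\|u\|_{T,s+3} + \|\pa_t u\|_{T,s+3})$. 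Differentiating once more produces
\[
\pa_{tt} a_i(u) = \sum_{j=0}^{3}(\pa_{z_j}\pa_{z_i}\ttf)(\cdot)\,\pa_x^j\pa_{tt} u + \sum_{j,k=0}^{3}(\pa_{z_k}\pa_{z_j}\pa_{z_i}\ttf)(\cdot)\,(\pa_x^k\pa_t u)(\pa_x^j\pa_t u),
\]
and the same strategy, applied once to the piece linear in $\pa_{tt} u$ and once to the quadratic-in-$\pa_t u$ piece (using the uniform $W^{3,\infty}$ bound on $\pa_t u$ to move all the Sobolev cost onto a single factor in each product), completes the bound for $\pa_{tt} a_i(u)$ in terms of $\|u,\pa_t u,\pa_{tt} u\|_{T,s+3}$, uniformly in $i=0,1,2,3$.

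The only real obstacle is the bookkeeping of regularity: the tame composition estimate in $H^s$ requires $\ttf$ to possess sufficiently many classical derivatives, and each time-differentiation of $a_i$ consumes one of them (trading it for a factor involving $\pa_t u$ or $\pa_{tt} u$). The constraint $s \leq r-3$ is exactly what guarantees that, after two time-differentiations, the surviving partial derivatives of $\ttf$ remain smooth enough to feed into the tame composition estimate at Sobolev regularity $s$. Beyond this counting, the argument is a routine application of the Moser-type estimates of Appendix \ref{sec:tame}.
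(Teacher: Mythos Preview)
Your proposal is correct and follows exactly the approach the paper intends: the paper's own proof is the single sentence ``Apply standard tame estimates for composition of functions, see Lemma~\ref{lemma:tame cf},'' and what you have written is precisely a careful unpacking of that sentence --- using \eqref{i2} to kill the additive constant in the composition estimate, differentiating in $t$ via the chain rule, and combining Lemma~\ref{lemma:tame cf} with the tame product rule of Lemma~\ref{lemma:tame basic}. Your regularity count (third $z$-derivatives of $\ttf$ appear in $\partial_{tt}a_i$, forcing $s\le r-3$) matches the hypothesis of the lemma.
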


\begin{proof}
Apply standard tame estimates for composition of functions, see Lemma \ref{lemma:tame cf}.
\end{proof}

Now we apply the reduction procedure 
to any linear operator of the form \eqref{L1} where 
\begin{equation}\label{a2a3}
a_2 (t,x) = c \pa_x a_3 (t,x)
\end{equation}
for some constant $c \in \R$ (note that $P'(u)$ has $c=2$ because of the Hamiltonian structure of $\ttf_1$).
Regarding the loss of regularity with respect to the space variable $x$,
the estimates in the sequel will be not sharp. 
In the whole section we consider $T > 0$ fixed, and, unless otherwise specified, 
all the constants may depend on $T$.

\begin{remark}\label{come fosse Ham}
Given a linear operator $\mL_0$ of the form \eqref{L1}, define the operator $\mL_0^*$ as
\begin{equation}\label{Lstar}
\mL_0^* h := - \pa_t h - \pa_{xxx} \{ (1 + a_3) h \} + \pa_{xx} (a_2 h) - \pa_x (a_1 h) + a_0 h\ .
\end{equation}
Note that $- \mL_0^*$ is still an operator of the form \eqref{L1}, namely
\begin{equation}\label{Lstar.2}
-\mL_0^* = \pa_t + (1+a_3^*) \pa_{xxx} + a_2^* \pa_{xx} + a_1^* \pa_x + a_0^*
\end{equation}
with
\begin{alignat}{2} \label{Lstar.3}
a_3^* & := a_3, 
\qquad & 
a_2^* & := 3 (a_3)_x - a_2, 
\\ 
a_1^* & := 3 (a_3)_{xx} - 2 (a_2)_x +a_1, 
\qquad & 
a_0^* & := (a_3)_{xxx} - (a_2)_{xx} + (a_1)_x - a_0. \notag
\end{alignat}
It follows from \eqref{Lstar.2}, \eqref{Lstar.3} that if $\mL_0$ satisfies \eqref{a2a3}, then also $-\mL_0^*$ satisfies \eqref{a2a3} (with a different constant), namely $a_2^* = (3 - c) \pa_x a_3^*$. In particular, if $\mL_0$ satisfies \eqref{a2a3} with $c=2$ (which is the case if $\mL_0 = P'(u)$), then $-\mL_0^*$ satisfies \eqref{a2a3} with $c=1$.
\end{remark}

\subsection{Step 1. Change of the space variable} \label{step-1}

We consider a $t$-dependent family of diffeomorphisms of the circle $\T$ of the form 
\begin{equation}\label{L3}
y = x + \b(t,x), 
\end{equation}
where $\b$ is a real-valued function, $2\pi$ periodic in $x$, defined for $t \in [0,T]$, 
with $|\b_x(t,x)| \leq 1/2$ for all $(t,x) \in [0,T] \times \T$. 
We define the linear operator 
\begin{equation}\label{L4}
({\cal A}h)(t,x) := h(t, x + \b(t,x)). 
\end{equation}
The operator $\mA$ is invertible, with inverse $\mA^{-1}$, transpose $\mA^T$ 
(transpose with respect to the usual $L^2_x$-scalar product) and inverse transpose $\mA^{-T}$ given by
\begin{equation}\label{L5}
\begin{aligned}
({\cal A}^{-1} v)(t,y) & =  v(t,y + \tilde \b (t,y)),
\quad 
(\mA^T v)(t,y) = (1 + \tilde \b_y(t,y)) \, v(t,y + \tilde \b (t,y)),\\
({\cal A}^{-T}h)(t,x) & = (1+ \b_x (t,x)) \, h(t, x + \b(t,x))
\end{aligned}
\end{equation}
where $y \mapsto y + \tilde \b(t,y)$ is the inverse diffeomorphism of \eqref{L3}, namely
\begin{equation} \label{L6}
x = y + \tilde \b(t,y) \quad \Longleftrightarrow \quad y = x + \b(t,x).  
\end{equation}

Given the operator
\begin{equation}\label{L0}
\mL_0 :=  \pa_t + (1 + a_3 (t,x)) \pa_{xxx} + a_2 (t,x)  \pa_{xx} + a_1 (t,x) \pa_{x} + a_0 (t,x) \ ,
\end{equation}
with $a_2 (t,x) = c \pa_x a_3 (t,x)$ we calculate the conjugate ${\cal A}^{-1} \mL_0 {\cal A}$. 
The conjugate $ {\cal A} \inv a {\cal A} $ of any multiplication operator 
$a : h(t,x) \mapsto a(t,x) h(t,x)$ is the multiplication operator 
$(\mA^{-1} a)$ that maps $v(t,y) \mapsto (\mA^{-1} a)(t,y) \, v(t,y)$.
By conjugation, the differential operators become 
\begin{align*}
{\cal A}\inv \pa_t {\cal A} = \pa_t + (\mA\inv \b_t) \pa_y 
\qquad 
{\cal A}\inv \pa_x {\cal A} = \{ {\cal A}\inv (1 + \b_x) \} \, \pa_y 
\end{align*}
then $\mA^{-1} \pa_{xx} \mA = (\mA^{-1} \pa_x \mA)(\mA^{-1} \pa_x \mA)$, 
and similarly for the conjugate of $\pa_{xxx}$. 
We calculate
\begin{equation} \label{L7}
\mL_1 
:= {\cal A}^{-1} \mL_0 {\cal A} 
= \pa_t + a_4(t,y) \pa_{yyy} 
+ a_5(t,y) \pa_{yy} 
+ a_6(t,y) \pa_{y} 
+ a_7(t,y)
\end{equation} 
where 
\begin{align}
\notag
a_4 & = \mA\inv \{ (1+a_3) (1+\b_x)^3 \}, 
\qquad 
a_5 = \mA\inv \{ a_2 (1 + \b_x)^2 + 3 (1 + a_3) \b_{xx} (1+\b_x) \},
\\
a_6 & = \mA\inv \{ \b_t + (1+a_3) \b_{xxx} + a_2 \b_{xx} + a_1 (1+\b_x) \}, \qquad a_7 = \mA\inv a_0.
\label{L8} 
\end{align}

We look for $\b(t,x)$ such that the coefficient $a_4(t,y)$ of the highest order derivative $\partial_{yyy}$ in \eqref{L7} does not depend on $y$, namely 
$a_4(t,y) = b(t)$ for some function $b(t)$ of $t$ only. 
This is equivalent to 
\begin{equation}\label{L11}
\big( 1 + a_3(t,x) \big) \big( 1 + \b_x(t,x) \big)^3 = b(t),
\end{equation}
namely 
\begin{equation} \label{L12}
\beta_{x} = \rho_0, \qquad 
\rho_0(t,x) := b(t)^{1/3} \big( 1 + a_3(t,x) \big)^{-1/3} - 1. 
\end{equation} 
The equation \eqref{L12} has a solution $\b$, periodic in $ x $, if and only if
$ \int_{\T}{\rho_0(t,x) \, dx} = 0$ for all $t$. 
This condition uniquely determines 
\begin{equation} \label{L13}
b(t) = \left( \frac{1}{2\pi}\int_{\T} \big( 1 + a_3(t,x) \big)^{-\frac13} \, dx \right)^{-3}.
\end{equation}
Then we fix the solution (with zero average) of \eqref{L12}, 
\begin{equation} \label{L14}
\beta(t,x) := \, (\pa_x\inv \rho_0)(t,x), 
\end{equation}
where $\pa_x\inv h$ is the primitive of $h$ with zero average in $x$ (defined in Fourier).
We have conjugated $\mL_0$ to 
\begin{equation}  \label{L15}
\mL_1 = {\cal A}^{-1} \mL_0 {\cal A}  
= \pa_t + a_4(t) \pa_{yyy} + a_5 (t,y) \pa_{yy} + a_6(t,y) \pa_y + a_7(t,y),
\end{equation} 
where $a_4(t) := b(t)$ is defined in \eqref{L13}. 

We prove here some bounds that will be used later.

\begin{lemma} \label{lemma:mA}
There exist positive constants $\s, \d_*$ with the following properties.
Let $s \geq 0$, and let $a_3(t,x), a_2(t,x),a_1(t,x), a_0(t,x)$ be four functions with 
$a_2 = c \pa_x a_3$ for some $c\in\R$. Moreover, assume $\pa_{tt} a_3, \pa_t a_3, a_3, \pa_t a_1, a_1, a_0 \in C([0,T],H^{s+\s}_x)$. 
Let 
\begin{equation}\label{m.1}
\d(\mu) := \| \pa_{tt} a_3, \pa_t a_3, a_3, \pa_t a_1, a_1, a_0 \|_{T,\mu + \s} 
\quad \forall \mu \in [0,s].
\end{equation}
If $\d(0) \leq \d_*$, then the operator $\mA$ 
defined in \eqref{L4}, \eqref{L14}, \eqref{L12}, \eqref{L13} 
belongs to $C([0,T], \mL(H^\mu_x))$ for all $\mu \in [0,s]$ 
and satisfies 
\begin{equation} \label{m.2}
\| \mA h \|_{T,\mu} 
\leq C_\mu \big( \| h \|_{T,\mu} + \d(\mu) \| h \|_{T,0} \big)
\quad \forall h \in C([0,T],H^\mu_x),
\end{equation}
for some positive $C_\mu$ depending on $\mu$.
The inverse operator $\mA^{-1}$, 
the transpose $\mA^T$ and the inverse transpose $\mA^{-T}$ 
all satisfy the same estimate \eqref{m.2} as $\mA$.

The functions $a_4(t) = b(t)$, $a_5(t,y)$, $a_6(t,y)$, $a_7(t,y)$, $\b(t,x)$, $\tilde \b(t,y)$ 
defined in \eqref{L13}, \eqref{L12}, \eqref{L14}, \eqref{L8}, \eqref{L6} 
belong to $C([0,T], H^\mu_x)$ for all $\mu \in [0,s]$ and satisfy
\begin{equation} \label{m.3}
\| \b, \tilde\b, a_5, \pa_t a_5, a_6, \pa_t a_6, a_7 \|_{T,\mu} + |a_4-1, a_4'|_T
\leq C_\mu \d(\mu)\ .
\end{equation}
Finally, the coefficient $a_5 (t, y)$ satisfies
\begin{equation}\label{m.4}
\int_\T a_5 (t ,y) \, dy = 0 \qquad \forall t \in [0,T]\ .
\end{equation}

\end{lemma}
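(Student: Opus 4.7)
The plan is to derive the estimates \eqref{m.2}, \eqref{m.3} in sequence, starting from the defining formulas for $b(t)$ and $\b$, then propagating the bounds through the composition operator $\mA$ and through the formulas \eqref{L8}, and finally verifying the zero-mean identity \eqref{m.4} by a direct computation. The parameters $\s$ and $\d_*$ will be chosen large enough (respectively, small enough) so that a Sobolev embedding guarantees $\|a_3\|_\infty \leq 1/4$, $\|\b_x\|_\infty \leq 1/2$, and so that all subsequent composition estimates absorb the finitely many low-order derivatives that appear.

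First I would estimate $a_4(t)=b(t)$ from \eqref{L13} by expanding $(1+a_3)^{-1/3}$ around $0$; since $\d(0)$ is small, one gets $|a_4-1|_T \leq C\d(0)$, and differentiating in $t$ yields $|a_4'|_T \leq C\d(0)$ (this uses $\pa_t a_3$, which is why \eqref{m.1} contains it). Then $\rho_0 = b(t)^{1/3}(1+a_3)^{-1/3}-1$ satisfies a tame composition estimate (Lemma \ref{lemma:tame cf}), giving $\|\rho_0\|_{T,\mu} \leq C_\mu\d(\mu)$ for $\mu\in[0,s]$; since $\rho_0$ has zero mean by construction of $b(t)$, the primitive $\b = \pa_x^{-1}\rho_0$ obeys the same bound. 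The inverse diffeomorphism $\tilde\b$ is handled by the standard tame estimate for the inversion of near-identity Sobolev diffeomorphisms, giving $\|\tilde\b\|_{T,\mu}\leq C_\mu\d(\mu)$.

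With these bounds in hand, \eqref{m.2} for $\mA$ is a direct application of the Moser-type composition estimate: $\|\mA h\|_{T,\mu} \leq C_\mu(\|h\|_{T,\mu}+\|\b\|_{T,\mu}\|h\|_{T,0})$. The corresponding estimates for $\mA^{-1}$, $\mA^T$, $\mA^{-T}$ follow from the explicit formulas \eqref{L5} by combining the same composition estimate (with $\tilde\b$ replacing $\b$) with the algebra property of $H^\mu_x$ (for the Jacobian factors $1+\b_x$ or $1+\tilde\b_y$). The bounds in \eqref{m.3} for $a_5,a_6,a_7$ follow from the formulas \eqref{L8} by combining the tame product estimate, the composition estimate for $\mA^{-1}$, and the bounds on $\b$ and its space/time derivatives; the time derivatives $\pa_t a_5$, $\pa_t a_6$ come from differentiating \eqref{L8} and using that $\pa_t\b$ satisfies the analogous bound with $\pa_t a_3, \pa_t a_1$ appearing in place of $a_3, a_1$ (this is what dictates the shape of $\d(\mu)$ in \eqref{m.1}).

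For the zero-mean identity \eqref{m.4}, I would compute directly. Using the change of variable $y = x+\b(t,x)$ with Jacobian $1+\b_x$,
\begin{equation*}
\int_\T a_5(t,y)\,dy = \int_\T \bigl[a_2(1+\b_x)^3 + 3(1+a_3)\b_{xx}(1+\b_x)^2\bigr]\,dx.
\end{equation*}
Differentiating the identity $(1+a_3)(1+\b_x)^3 = b(t)$ (cf.\ \eqref{L11}) in $x$ yields $3(1+a_3)(1+\b_x)^2\b_{xx} = -(\pa_x a_3)(1+\b_x)^3$, hence, using $a_2 = c\,\pa_x a_3$, the integrand becomes $(c-1)(\pa_x a_3)(1+\b_x)^3 = (c-1)\,b(t)\,\pa_x\log(1+a_3)$, whose integral over $\T$ vanishes by periodicity. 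Note that the argument works for every constant $c\in\R$, so the cancellation is structural, not tied to the Hamiltonian value $c=2$. The only genuinely delicate point in the whole proof is the bookkeeping of the fixed loss $\s$ of derivatives, chosen once and for all so that every composition and product estimate closes; modulo this, the argument is a routine application of the tame estimates collected in Section \ref{sec:tame}.
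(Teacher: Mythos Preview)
Your proposal is correct and follows essentially the same approach as the paper: the estimates \eqref{m.2} and \eqref{m.3} are obtained by routine tame product/composition/change-of-variable bounds (the paper simply cites Section~\ref{sec:tame}), and the zero-mean identity \eqref{m.4} is verified by the change of variable $y=x+\b(t,x)$ together with the relation $(1+a_3)(1+\b_x)^3=b(t)$. The only cosmetic difference is that for \eqref{m.4} the paper keeps the two terms separate and recognizes each as a log-derivative ($c\,\pa_x\log(1+a_3)$ and $3\,\pa_x\log(1+\b_x)$), whereas you first differentiate \eqref{L11} to collapse everything into the single term $(c-1)\,b(t)\,\pa_x\log(1+a_3)$; both lead to the same conclusion.
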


\begin{proof} 
The proof of \eqref{m.2} and \eqref{m.3} is a straightforward application of the standard tame estimates for products, composition of functions and changes of variable, 
see section \ref{sec:tame}.

To prove \eqref{m.4}, we use the definition of $b(t)$ in \eqref{L13}, the equality $a_2= c \pa_x a_3$, and the change of variables \eqref{L6}, and we compute
\begin{align*}
\int_\T a_5 (t, y) \, dy 
& = \int_\T [a_2 (1 + \b_x)^2 + 3 (1 + a_3) \b_{xx} (1+ \b_x) ] (1 + \b_x) \, dx \\
& =  b(t) \left\{ c \int_\T \frac{ \pa_x a_3 (t, x)}{1 + a_3 (t, x)} \, dx + 3 \int_\T \frac{\b_{xx} (t, x)}{1 + \b_x (t, x)} \, dx \right\} \\
& = b(t) \left\{ c \int_\T \pa_x \log (1 + a_3 (t, x)) \, dx + 3 \int_\T  \pa_x \log (1 + \b_x (t, x)) \, dx \right\} = 0. \ \qedhere
\end{align*}
\end{proof}

\subsection{Step 2. Time reparametrization} \label{step-2} 

The goal of this section is to obtain a constant coefficient instead of $a_4(t)$. 
We consider a diffeomorphism $\psi:[0,T] \to [0,T]$ 
which gives the change of the time variable 
\begin{equation} \label{L16}
\psi(t) = \tau \quad \Leftrightarrow \quad t = \psi^{-1}(\t),
\end{equation}
with $\psi(0) =0$ and $\psi(T) = T$. 
We define 
\begin{equation}\label{L17}
(\mB h)(t,y):= h(\psi(t), y), \quad 
(\mB^{-1} v)(\t,y) := v(\psi^{-1}(\t), y).
\end{equation}
By conjugation, the differential operators become 
\begin{equation}  \label{L18}
\mB^{-1} \pa_t \mB = \rho(\t) \pa_\t, \quad 
\mB^{-1} \pa_y \mB = \pa_y, \quad 
\rho := \mB^{-1}(\psi'),
\end{equation}
and therefore \eqref{L15} is conjugated to
\begin{equation}\label{L19}
\mB\inv \mL_1 \mB = \rho \,  \pa_\t  
+ (\mB\inv a_4) \pa_{yyy} 
+ (\mB\inv a_5) \pa_{yy} 
+ (\mB\inv a_6) \pa_{y} 
+ (\mB\inv a_7).
\end{equation}
We look for $\psi$ such that the (variable) coefficients of the highest order derivatives 
($\pa_\t $ and $\pa_{yyy}$) are proportional, namely
\begin{equation} \label{L20}
(\mB\inv a_4)(\t) 
= m \rho(\t) 
= m (\mB\inv( \psi')) (\t)
\end{equation}
for some constant $m \in \R$. 
Since $\mB$ is invertible, this is equivalent to requiring that
\begin{equation} \label{L21}
a_4(t) = m \psi'(t).
\end{equation}
Integrating on $[0,T]$ determines the value of the constant $m$, 
and then we fix $\psi$:
\begin{equation}  \label{L22}
m := \frac{1}{T} \, \int_0^T a_4(t) \, dt, \quad 
\psi(t) := \frac{1}{m} \int_0^t a_4(s) \, ds.
\end{equation} 
With this choice of $\psi$ we get 
\begin{equation}\label{L23}
\mB\inv \mL_1 \mB = \rho \, \mL_2, 
\qquad 
\mL_2 := \pa_\t + m \pa_{yyy} 
+ a_8 (\t,y) \, \pa_{yy} 
+ a_9(\t,y) \, \pa_{y} 
+ a_{10} (\t,y),
\end{equation}
where
\begin{align}\label{L24}
a_8(\t,y) & := \frac{1}{\rho(\t)}\,(\mB\inv a_5)(\t,y), \qquad 
a_9(\t,y) := \frac{1}{\rho(\t)}\,(\mB\inv a_6)(\t,y), 
\\
a_{10}(\t,y) & := \frac{1}{\rho(\t)}\,(\mB\inv a_7)(\t,y). \notag
\end{align}
Note that for all $\t \in [0,T]$ one has
\begin{equation}\label{L24.1}
\int_\T a_8 (\t, y) \, dy = \frac{1}{(\mB\inv \psi')(\t)} \int_\T (\mB\inv a_5) (\t, y) \, dy
= \frac{1}{\psi' (t)} \int_\T a_5 (t, y) \, dy = 0\ .
\end{equation}

By straightforward calculations, we prove the following lemma.

\begin{lemma}
\label{lemma:mB}
There exists $\d_* > 0$ with the following properties.
Let $a_4 \in C([0,T], \R)$ with $|a _4(t) - 1| \leq \d_*$ for all $t \in [0,T]$.
Then the operator $\mB$ defined in \eqref{L17}, \eqref{L22} 
is an invertible isometry of $C([0,T],H^s_x)$ for all $s \geq 0$, namely
\begin{equation} \label{m.9}
\| \mB h \|_{T,s} = \| h \|_{T,s} \quad \forall h \in C([0,T], H^s_x), \quad s \geq 0.
\end{equation}

Moreover there exists a positive constant $\s$ with the following property.
Let $a_4 \in C^1([0,T],\R)$, with 
$|a_4(t) - 1| \leq \d_*$ and $|a_4'(t)| \leq 1$ for all $t \in [0,T]$.
Let $s \geq 0$, and $a_5, \pa_t a_5, a_6, \pa_t a_6, a_7 \in C([0,T],H^s_x)$ with $\int_\T a_5 (t,y) \, dy = 0$ for all $t \in [0,T]$. 
Then the functions $a_8(t,x)$, $a_9(t,x)$, $a_{10}(t,x)$, $\psi(t)$, $\rho(t)$ and the constant $m$  
defined in \eqref{L24}, \eqref{L22}, \eqref{L18} satisfy
\begin{equation} \label{m.11}
|m-1| + |\psi' -1, \rho -1|_T 
+ \| a_8, \pa_\t a_8, a_9, \pa_\t a_9, a_{10} \|_{T,s} 
\leq C \| a_5, \pa_t a_5, a_6, \pa_t a_6, a_7 \|_{T,s} 
\end{equation}
where $C$ is independent of $s$. Moreover one has
\begin{equation}\label{m.12}
\int_\T a_8 (\t ,y) \, dy = 0 \qquad \forall \t \in [0,T]\ .
\end{equation}
\end{lemma}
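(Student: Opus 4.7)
The plan is to verify both parts of the lemma by direct computation, exploiting the fact that $\mB$ acts only on the time variable, so that the spatial Sobolev structure is essentially unchanged.

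For the isometry statement, I would first note that if $\d_*$ is chosen small enough and $|a_4(t)-1| \leq \d_*$, then $m = T^{-1}\int_0^T a_4(t)\,dt$ satisfies $|m-1| \leq \d_*$, so $\psi'(t) = a_4(t)/m > 0$. Combined with $\psi(0)=0$ and $\psi(T) = m^{-1}\int_0^T a_4 = T$, this gives that $\psi : [0,T] \to [0,T]$ is an orientation-preserving homeomorphism. Since $(\mB h)(t,y) = h(\psi(t),y)$ just precomposes with a bijection of $[0,T]$ in the time variable,
\[
\| \mB h \|_{T,s} = \sup_{t \in [0,T]} \| h(\psi(t)) \|_s = \sup_{\t \in [0,T]} \| h(\t) \|_s = \| h \|_{T,s},
\]
and the same identity holds for $\mB^{-1}$.

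For the quantitative estimate \eqref{m.11}, the bounds on $m$ and $\psi'$ are immediate: $m-1 = T^{-1}\int_0^T (a_4-1)\,dt$ yields $|m-1| \leq |a_4-1|_T$, and $\psi' - 1 = (a_4 - m)/m$ gives $|\psi'-1|_T \leq C|a_4-1|_T$. Since $\rho = \mB^{-1}\psi'$ and $\mB^{-1}$ preserves sup-norms, $|\rho-1|_T = |\psi'-1|_T$. For $a_8 = \rho^{-1}\mB^{-1}a_5$, I would use the isometry property $\|\mB^{-1}a_5\|_{T,s} = \|a_5\|_{T,s}$ together with the tame product estimate (Lemma in section~\ref{sec:tame}) and the fact that $\rho^{-1}$ is a bounded $C^1$ function of $\t$; the same for $a_9$ and $a_{10}$. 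For the time derivatives, a direct chain-rule computation gives
\[
\pa_\t a_8(\t,y) = -\rho(\t)^{-3}\,\psi''(\psi^{-1}(\t))\,a_5(\psi^{-1}(\t),y) + \rho(\t)^{-2}\,(\pa_t a_5)(\psi^{-1}(\t),y),
\]
and similarly for $\pa_\t a_9$ and $\pa_\t \rho$. Here $\psi''(t) = a_4'(t)/m$, so the hypothesis $|a_4'|_T \leq 1$ (together with $|m-1| \leq \d_*$) furnishes a uniform bound on $\psi''$ and hence on $\pa_\t \rho$, and the full estimate \eqref{m.11} follows.

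Finally, \eqref{m.12} follows from the same computation as the analogous identity \eqref{L24.1}: changing variable $y \mapsto y$ (no spatial change is involved) one has
\[
\int_\T a_8(\t,y)\,dy = \rho(\t)^{-1} \int_\T a_5(\psi^{-1}(\t),y)\,dy = 0
\]
by the hypothesis $\int_\T a_5(t,y)\,dy = 0$ for all $t$. The only mild bookkeeping subtlety is tracking the chain rule for the time derivatives of composite functions like $\mB^{-1} a_5$ and ensuring all factors $\rho^{-1}$, $\psi''$, $(\psi^{-1})'$ are bounded under the standing smallness hypothesis; there is no substantial obstacle, in line with the authors' remark that the lemma follows from ``straightforward calculations.''
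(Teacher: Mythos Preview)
Your proof is correct and matches the paper's approach: the authors simply note that the lemma follows from ``straightforward calculations,'' and you have supplied exactly those calculations (the isometry via $\psi$ being a bijection of $[0,T]$, the chain-rule formula for $\pa_\t a_8$, and the zero-mean property from $\int_\T a_5 = 0$). One minor observation worth making explicit: your bounds $|m-1|,\ |\psi'-1|_T,\ |\rho-1|_T \leq C|a_4-1|_T$ are the correct ones, and strictly speaking the right-hand side of \eqref{m.11} as written cannot control these three quantities since they depend only on $a_4$; this is a harmless imprecision in the statement, since in every use of the lemma (cf.\ \eqref{m.3} and \eqref{a467 piccoli}) $|a_4-1|_T$ is dominated by the same small parameter.
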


\subsection{Step 3. Multiplication}\label{step-3}

In this section we eliminate the term $a_8 (\t, y) \pa_{yy}$ from the operator $\mL_2$ defined in \eqref{L23}.
To this end, we consider the multiplication operator $\mM$ defined as
\begin{equation}\label{M1}
\mM h (\t,y) := q(\t,y) h(\t,y)
\end{equation}
with $q:[0,T]\times \T \to \R$. We compute
\begin{equation}\label{M2}
\mM\inv \mL_2 \mM = \pa_\t + m \pa_{yyy} + a_{11} (\t,y) \pa_{yy} + a_{12} (\t,y) \pa_y + a_{13} (\t,y)
\end{equation}
with
\begin{equation}\label{M3}
a_{11}:= a_8 + \frac{3m q_y}{q}, \qquad
a_{12}:= a_9 + \frac{2 a_8 q_y + 3m q_{yy}}{q}, \qquad
a_{13}:= \frac{\mL_2 q}{q}.
\end{equation}
We want to choose $q$ such that $a_{11}=0$, which is equivalent to
\begin{equation}\label{M4}
3 m q_y + a_8 q = 0\ .
\end{equation}
Thanks to \eqref{m.12}, equation \eqref{M4} admits the space-periodic solution
\begin{equation}\label{M5}
q(\t,y):=\exp\Big\{ -\frac{1}{3m} ( \pa_y\inv a_8 ) (\t, y) \Big\}\ .
\end{equation}
As a consequence, we get
\begin{equation}\label{M7}
\mL_3 := \mM\inv \mL_2 \mM = \pa_\t + m \pa_{yyy} + a_{12} (\t,y) \pa_y + a_{13} (\t,y)\ .
\end{equation}

The proof of the following lemma is straightforward.

\begin{lemma}
\label{lemma:mM}
Let $s\geq 0$ and let $a_8\in C([0,T], H^s_x)$ with $\int_\T a_8 (\t, y) \, dy = 0$ for all $\t \in [0,T]$. Then for all $\mu\in[0,s]$, the operator $\mM$ defined in \eqref{M1}, \eqref{M5} and its inverse $\mM\inv$ belong to $C([0,T], \mL (H^\mu_x))$. Note that $\mM=\mM^T$.

Furthermore, there exist two positive constants $\d_*,\s$ with the following properties.
Assume that $a_8, \pa_t a_8, a_9, \pa_t a_9, a_{10}\in C([0,T], H^{s+\s}_x)$ and let
\begin{equation}\label{M8}
\d(\mu) := \| a_8, \pa_t a_8, a_9, \pa_t a_9, a_{10} \|_{T,\mu + \s}\ .
\end{equation}
Then if $\d(0) \leq \d_*$, for all $\mu\in[0, s]$ the operator $\mM$ and its inverse $\mM\inv$ satisfy
\begin{equation}\label{M9}
\| \mM^{\pm 1} h \|_{T,\mu} \leq
C_\mu \big( \| h \|_{T,\mu} + \d(\mu) \| h \|_{T,0} \big)
\quad \forall h \in C([0,T],H^\mu_x),
\end{equation}
for some positive $C_\mu$ depending on $\mu$.
Moreover, the functions $a_{12} (\t,y), a_{13} (\t, y), q(\t, y)$ defined in \eqref{M3}, \eqref{M5} satisfy
\begin{equation} \label{M10}
\| q-1, a_{12}, \pa_t a_{12}, a_{13} \|_{T,\mu} 
\leq C_\mu \d(\mu)\ .
\end{equation}
\end{lemma}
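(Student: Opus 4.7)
My plan is to break the proof into three stages: symmetry and tame bounds on $\mM^{\pm 1}$; algebraic simplification of $a_{12}, a_{13}$ via the defining ODE \eqref{M4}; and tame estimates on the resulting explicit expressions. The symmetry $\mM = \mM^T$ is immediate since $\mM$ acts by multiplication by the real-valued function $q$. For \eqref{M9}, I would write $q = e^g$ with $g := -\frac{1}{3m}\, \pa_y^{-1} a_8$, which is well-defined because $\int_\T a_8(\t,\cdot)\,dy = 0$ by hypothesis and which satisfies $\|g\|_{T,\mu} \leq C \|a_8\|_{T,\mu}$. The smallness assumption $\d(0) \leq \d_*$, combined with the Sobolev embedding $H^\s_x \hookrightarrow L^\infty_x$, makes $\|g\|_{L^\infty}$ as small as desired. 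Tame composition estimates (Lemma \ref{lemma:tame cf}) for the analytic function $z \mapsto e^z - 1$ then yield $\|q-1\|_{T,\mu} \leq C_\mu \|a_8\|_{T,\mu}$, and analogously for $q^{-1}-1 = e^{-g}-1$. Estimate \eqref{M9} follows by applying the standard tame product estimate (Section \ref{sec:tame}) to $\mM^{\pm 1} h = q^{\pm 1} h$.

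Next, the key algebraic observation is that \eqref{M4} reads $q_y/q = -a_8/(3m)$; differentiating in $y$ gives $q_{yy}/q = a_8^2/(9m^2) - \pa_y a_8/(3m)$ and then $q_{yyy}/q = -a_8^3/(27m^3) + a_8 \pa_y a_8/(3m^2) - \pa_{yy} a_8/(3m)$. Substituting into \eqref{M3} and simplifying, the dependence on $q$ disappears completely:
\[
a_{12} = a_9 - \pa_y a_8 - \frac{a_8^2}{3m},
\]
\[
a_{13} = a_{10} - \frac{a_8 a_9}{3m} - \frac{\pa_{yy} a_8}{3} + \frac{2 a_8^3}{27 m^2} - \frac{\pa_y^{-1} \pa_\t a_8}{3m},
\]
where the last term makes sense because $\int_\T \pa_\t a_8\,dy = \pa_\t \int_\T a_8\,dy = 0$.

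The bound on $q-1$ in \eqref{M10} comes from the first step, while the bounds on $a_{12}, a_{13}$ and on $\pa_t a_{12}$ (obtained by differentiating the explicit formula for $a_{12}$ and invoking $\pa_t a_8, \pa_t a_9$, which are controlled by $\d(\mu)$) follow from the explicit polynomial expressions via routine tame product estimates. The constant $\s$ will be fixed so as to absorb the Sobolev embedding into $L^\infty$ needed for smallness and to accommodate the two $y$-derivatives of $a_8$ appearing in $a_{13}$ (the operator $\pa_y^{-1}$ is smoothing and so costs nothing). I do not expect a true obstacle here: the ``straightforward'' character of the proof alluded to by the authors is justified precisely by the cancellations above, which eliminate $q$ in favor of polynomials in $a_8, \pa_y a_8, \pa_{yy} a_8, \pa_\t a_8, a_9, a_{10}$. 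Consistently, the statement does not require $\pa_t a_{13}$, matching the absence of $\pa_t a_{10}$ from $\d(\mu)$.
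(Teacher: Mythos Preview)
Your proposal is correct and is precisely the kind of ``straightforward'' argument the paper alludes to (the paper gives no proof beyond that word). Your explicit elimination of $q$ from $a_{12}$ and $a_{13}$ via the relation $q_y/q=-a_8/(3m)$ is a clean extra step: it reduces \eqref{M10} to tame product estimates on polynomials in $a_8,\partial_y a_8,\partial_{yy}a_8,\partial_y^{-1}\partial_\tau a_8,a_9,a_{10}$, whereas the most direct route would simply bound $q^{-1}$ and the quotients in \eqref{M3} using $\|q-1\|$ small. Both approaches are equivalent in difficulty; yours has the minor advantage of making the required value of $\sigma$ (two $y$-derivatives on $a_8$) transparent.
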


\subsection{Step 4. Translation of the space variable}\label{step-4}

We consider the change of the space variable $z = y + p(\t)$
and the operators 
\begin{equation}\label{L25}
{\cal T} h(\t,y) := h(\t, y + p(\t)), \quad 
{\cal T}\inv v(\t,z) := v(\t, z - p(\t))
\end{equation}
where $p$ is a function $p : [0,T] \to \R$. 
The differential operators become 
${\cal T}\inv \pa_y {\cal T} = \pa_z$ 
and ${\cal T}\inv \pa_\t  {\cal T}$ $= \pa_\t  + \{ \pa_\t  p(\th) \} \, \pa_z$. 
This is a special, simple case of the transformation $\mA$ of section \ref{step-1}.
Thus
\begin{equation}\label{L26}
\mL_4 := {\cal T}\inv \mL_3 {\cal T} 
= \pa_\t  + m \pa_{zzz} + a_{14}(\t,z) \pa_z + a_{15}(\t,z)
\end{equation}
where 
\begin{equation}\label{L27}
a_{14}(\t,z) := p'(\t) + ({\cal T}\inv a_{12}) (\t,z), 
\quad 
a_{15}(\t,z) := ({\cal T} \inv a_{13})(\t,z).
\end{equation}
Now we look for $p(\t)$ such that $a_{14}$ has zero space average. 
We fix 
\begin{equation} \label{L28}
p(\t) := - \frac{1}{2\p} \, \int_0^\t \int_\T a_{12} (s,y) \, dy ds.
\end{equation}
With this choice of $p$, after renaming the space-time variables $z=x$ and $\t=t$, we have 
\begin{equation} \label{L29}
\mL_4 = \pa_t + m \pa_{xxx} + a_{14}(t,x) \pa_x + a_{15}(t,x),
\qquad 
\int_{\T} a_{14}(t,x) \, dx = 0 \quad \forall t \in [0,T]. 
\end{equation}

With direct calculations we prove the following estimates.

\begin{lemma}
\label{lemma:mT}
Let $a_{12} \in C([0,T],L^2_x)$. 
Then the operator $\mT$ defined in \eqref{L25}, \eqref{L28} 
belongs to $C([0,T], \mL(H^s_x))$ for all $s \in [0,+\infty)$.  
In fact $\mT$ is an isometry, namely
\begin{equation} \label{m.21}
\| \mT h \|_{T,s} 
= \| h \|_{T,s} 
\quad \forall h \in C([0,T],H^s_x). 
\end{equation}
Moreover, $\mT$ is invertible and its transpose is $\mT^T = \mT^{-1}$. 

Let $s \geq 0$, and let $a_{12}, \pa_t a_{12}, a_{13} \in C([0,T],H^{s+1}_x)$ 
with $\| a_{12} \|_{T,0} \leq 1$. 
Then the functions $a_{14}, a_{15}, p$ 
defined in \eqref{L27}, \eqref{L28} satisfy
\begin{equation} \label{m.13}
\sup_{t \in [0,T]} |p(t)| + \| a_{14}, \pa_t a_{14}, a_{15} \|_{T,s} 
\leq C \| a_{12}, \pa_t a_{12}, a_{13} \|_{T,s+1}
\end{equation}
where $C$ is independent of $s$. 
\end{lemma}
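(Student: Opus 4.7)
The plan is to verify each assertion directly from the explicit definition of $\mT$ as a $\t$-dependent translation of the space variable. The operator $\mT$ is a pure shift, so the two claims split naturally: isometry/invertibility come from translation invariance of $H^s(\T)$, and the bounds on $a_{14}, a_{15}, p$ are elementary consequences of \eqref{L27}--\eqref{L28} together with the chain rule.

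First I would treat the isometry and duality statements. For each fixed $\t \in [0,T]$, $h \mapsto h(\t, \cdot + p(\t))$ is translation in $x$, which preserves the modulus of every Fourier coefficient and hence the $H^s_x$ norm; taking the sup in $\t$ gives \eqref{m.21}. The inverse is already written in \eqref{L25}. For $\mT^T = \mT^{-1}$, the change of variable $z = y + p(\t)$ on $\T$ has unit Jacobian, so
\[
\int_\T (\mT h)(\t,y) \, v(\t,y) \, dy
= \int_\T h(\t,z)\, v(\t, z - p(\t)) \, dz
= \int_\T h(\t,z)\, (\mT^{-1} v)(\t,z) \, dz,
\]
which is the desired identity. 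The continuity in $t$ with values in $\mL(H^s_x)$ follows from the continuity of $p$ and a density argument (translation is strongly continuous on $H^s_x$).

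Next I would estimate $p$ and $p'$. From \eqref{L28} and Cauchy--Schwarz on $\T$,
\[
|p(\t)| \leq \frac{1}{2\pi}\int_0^\t \Bigl|\int_\T a_{12}(s,y)\,dy\Bigr|\,ds
\leq \frac{T}{\sqrt{2\pi}} \| a_{12} \|_{T,0},
\]
and $p'(\t) = -\frac{1}{2\pi}\int_\T a_{12}(\t,y)\,dy$ gives $|p'|_T \leq C \| a_{12}\|_{T,0} \leq C$. Then \eqref{L27} and the isometry property yield
\[
\| a_{14} \|_{T,s} \leq \sqrt{2\pi}\,|p'|_T + \| \mT^{-1} a_{12}\|_{T,s} \leq C\| a_{12}\|_{T,s},
\qquad
\| a_{15}\|_{T,s} = \| a_{13}\|_{T,s}.
\]

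The only nontrivial point is the bound on $\pa_t a_{14}$, which is where the loss of one derivative in \eqref{m.13} enters. Writing $(\mT^{-1} g)(\t,z) = g(\t,z - p(\t))$ and applying the chain rule,
\[
\pa_t (\mT^{-1} a_{12}) = \mT^{-1}(\pa_t a_{12}) - p'(\t)\, \mT^{-1}(\pa_x a_{12}),
\]
so, using the isometry once more together with $|p'|_T \leq C$,
\[
\| \pa_t a_{14}\|_{T,s}
\leq |p''|_T \sqrt{2\pi} + \| \pa_t a_{12}\|_{T,s} + |p'|_T \| a_{12}\|_{T,s+1}
\leq C\bigl(\| \pa_t a_{12}\|_{T,s} + \| a_{12}\|_{T,s+1}\bigr),
\]
where $p''(\t) = -\frac{1}{2\pi}\int_\T \pa_t a_{12}(\t,y)\,dy$ is controlled by $\|\pa_t a_{12}\|_{T,0}$. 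Summing these bounds gives \eqref{m.13} with a constant $C$ independent of $s$. I do not expect any serious obstacle here; the mild care required is only in tracking the extra $\pa_x$ produced by the chain rule, which is precisely what forces the $s+1$ on the right-hand side of \eqref{m.13}.
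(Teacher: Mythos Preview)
Your proposal is correct and matches the paper's approach, which is simply stated as ``direct calculations'' with no further detail. You have filled in exactly those calculations: translation invariance of the $H^s_x$ norm for the isometry and $\mT^T=\mT^{-1}$, the trivial bounds on $p,p',p''$ from \eqref{L28}, and the chain rule for $\pa_t(\mT^{-1}a_{12})$, which correctly accounts for the single extra derivative on the right-hand side of \eqref{m.13}.
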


\subsection{Step 5. Elimination of the order one}\label{step-5}

The goal of this section is to eliminate the term $a_{14}(t,x) \pa_x$.
Consider an operator $\mS$ of the form
\begin{equation}\label{L30}
\mS h := h + \g(t,x) \pa_x^{-1} h
\end{equation}
where $\g(t,x)$ is a function to be determined. 
Note that $\partial_x^{-1} \partial_x =  \partial_x \partial_x^{-1} = \pi_0$
where $\pi_0 h := h - \frac{1}{2\pi} \int_\T h \, dx$. 
We directly calculate 
\begin{equation} \label{L31}
\mL_4 \mS - \mS (\pa_t + m \partial_{xxx}) 
= a_{16} \partial_{x} + a_{17} + a_{18} \partial_{x}^{-1} 
\end{equation}
where 
\begin{equation} \label{L32} 
\begin{aligned}
a_{16} & := 3 m \g_x + a_{14}, \qquad \ \ 
a_{17} := a_{15} + ( 3 m \g_{xx} + a_{14} \g ) \pi_0, \\
a_{18} & := \g_t + m \g_{xxx} + a_{14} \g_x + a_{15} \g.
\end{aligned}
\end{equation}
We fix $\g$ as 
\begin{equation}\label{L33}
\g := - \frac{1}{3 m} \, \pa_x^{-1} a_{14},
\end{equation}
so that $a_{16} = 0$. 
By the following Lemma \ref{lemma:mS}, ${\cal S}$ is invertible, and we obtain
\begin{equation}\label{L34}
\mL_5 := \mS^{-1} \mL_4 \mS 
= \pa_t + m \pa_{xxx} + \mR, \qquad 
\mR := \mS^{-1} ( a_{17} + a_{18} \partial_{x}^{-1} ).
\end{equation}

\begin{lemma}
\label{lemma:mS}
There exist positive constants $\s, \d_*$ with the following properties.
Let $s \geq 0$, let $a_{14}, a_{15}$ be two functions with 
$a_{14}, \pa_t a_{14}, a_{15} \in C([0,T],H^{s+\s}_x)$ and 
$\int_\T a_{14} (t,x) \, dx = 0$. 
Let 
\begin{equation}\label{m.14}
\d(\mu) := \| a_{14}, \pa_t a_{14}, a_{15} \|_{T,\mu + \s} 
\quad \forall \mu \in [0,s].
\end{equation}
If $\d(0) \leq \d_*$, then the operator $\mS$ defined in \eqref{L30}, \eqref{L33} 
belongs to $C([0,T], \mL(H^\mu_x))$ for all $\mu \in [0,s]$ 
and satisfies 
\begin{equation} \label{m.16}
\| \mS h \|_{T,\mu} 
\leq C_\mu \big( \| h \|_{T,\mu} + \d(\mu) \| h \|_{T,0} \big)
\quad \forall h \in C([0,T],H^\mu_x),
\end{equation}
for some positive $C_\mu$ depending on $\mu$.
The operator $\mS$ is invertible, and its inverse $\mS^{-1}$, 
its transpose $\mS^T$ and its inverse transpose $\mS^{-T}$ 
all satisfy the same estimate \eqref{m.16} as $\mS$.

The operator $\mR$ defined in \eqref{L34} belongs to 
$C([0,T], \mL(H^\mu_x))$ for all $\mu \in [0,s]$ and it satisfies
\begin{equation} \label{m.15}
\| \mR h \|_{T,\mu} 
\leq C_\mu \big( \d(0) \| h \|_{T,\mu} + \d(\mu) \| h \|_{T,0} \big)
\quad \forall h \in C([0,T],H^\mu_x).
\end{equation}
The transpose $\mR^T$ belongs to $C([0,T], \mL(H^\mu_x))$
and satisfies the same estimate \eqref{m.15} as $\mR$. 
\end{lemma}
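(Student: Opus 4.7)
\medskip

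\noindent\textbf{Proof proposal.}
The plan is to first establish sharp estimates on $\g$ from its defining relation \eqref{L33}, then treat $\mS = I + \g\pa_x^{-1}$ as a bounded perturbation of the identity on $L^2_x$ so that Neumann series give invertibility in the small-data regime, and finally upgrade all bounds to $H^\mu_x$ via the standard ``iterate and absorb'' trick with tame product estimates.

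For the first step I would simply observe that, since $a_{14}$ has zero average in $x$ by \eqref{L29}, the primitive $\pa_x^{-1} a_{14}$ is well defined as a Fourier multiplier and gains one derivative, so
\[
\|\g\|_{T,\mu+1} \leq C|m|^{-1}\|a_{14}\|_{T,\mu} \leq C_\mu\d(\mu),\qquad \|\pa_t\g\|_{T,\mu+1}\leq C_\mu\d(\mu),
\]
provided $\s$ in \eqref{m.14} is chosen large enough (and using $|m-1|$ small from Lemma \ref{lemma:mB}, so $|m|$ is bounded away from $0$). By the Sobolev embedding $H^1(\T)\hookrightarrow L^\infty(\T)$ one then has $\|\g\|_{T,\infty}\lesssim\d(0)$, which is the single smallness parameter that will drive everything else.

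For the estimate \eqref{m.16} on $\mS$ I would apply the tame product estimate of Section \ref{sec:tame} together with the boundedness of $\pa_x^{-1}:H^\mu_x\to H^\mu_x$ (in fact $H^\mu_x\to H^{\mu+1}_x$), giving
\[
\|\g\,\pa_x^{-1}h\|_{T,\mu} \leq C_\mu\bigl(\|\g\|_{T,\mu+\s}\|h\|_{T,0}+\|\g\|_{T,\s}\|h\|_{T,\mu}\bigr)\leq C_\mu\bigl(\d(\mu)\|h\|_{T,0}+\d(0)\|h\|_{T,\mu}\bigr),
\]
and then the bound for $\mS$ follows (the $\d(0)\|h\|_{T,\mu}$ term is harmlessly absorbed into the $\|h\|_{T,\mu}$ term). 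For the invertibility I would note that on $L^2_x$ one has $\|\g\,\pa_x^{-1}\|_{\mL(L^2_x)}\leq\|\g\|_{T,\infty}\leq C\d(0)\leq 1/2$ after taking $\d_*$ small, hence the Neumann series $\mS^{-1}=\sum_{k\geq 0}(-\g\pa_x^{-1})^k$ converges in $\mL(L^2_x)$ and yields $\|\mS^{-1}h\|_{T,0}\leq 2\|h\|_{T,0}$. The key trick for the $H^\mu_x$ bound on $\mS^{-1}$ is to use the identity $\mS^{-1}=I-\g\pa_x^{-1}\mS^{-1}$: this gives
\[
\|\mS^{-1}v\|_{T,\mu} \leq \|v\|_{T,\mu} + C_\mu\bigl(\d(\mu)\|\mS^{-1}v\|_{T,0}+\d(0)\|\mS^{-1}v\|_{T,\mu}\bigr),
\]
and for $\d(0)\leq\d_*$ small enough the last term can be absorbed to the left, yielding \eqref{m.16} after using the $L^2$ bound already proved. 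The transpose satisfies $\mS^T=I-\pa_x^{-1}(\g\,\cdot)$ (because $\pa_x^T=-\pa_x$), and the same argument goes through verbatim; $\mS^{-T}=(\mS^T)^{-1}$ is handled identically by Neumann series and the iterate-and-absorb trick.

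The estimates on $a_{17}$ and $a_{18}$ defined in \eqref{L32} are straightforward from \eqref{L33} and the tame product estimates: each term is at most linear in the small coefficients $a_{14},a_{15},\g,\g_x,\g_{xx},\g_t,\g_{xxx}$, so
\[
\|a_{17}\|_{T,\mu+\s'}+\|a_{18}\|_{T,\mu+\s'}\leq C_\mu\d(\mu)
\]
for a slightly larger $\s'$, absorbed into the final $\s$. To obtain \eqref{m.15} I would then write $\mR h=\mS^{-1}(a_{17}h+a_{18}\pa_x^{-1}h)$ and apply the tame product estimate to each piece, which gives
\[
\|a_{17}h+a_{18}\pa_x^{-1}h\|_{T,\mu}\leq C_\mu\bigl(\d(\mu)\|h\|_{T,0}+\d(0)\|h\|_{T,\mu}\bigr),
\]
the crucial point being that $a_{17}$ and $a_{18}$ are themselves small of order $\d(0)$ in low norm, so that they produce the factor $\d(0)$ in front of $\|h\|_{T,\mu}$; then one applies \eqref{m.16} on $\mS^{-1}$ and absorbs the (harmless) cross term $\d(\mu)\d(0)\leq C\d(\mu)$. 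The transpose $\mR^T=(a_{17}+a_{18}\pa_x^{-1})^T\mS^{-T}=(a_{17}-\pa_x^{-1}a_{18})\mS^{-T}$ is handled with the same tame product estimates and the already proved bound on $\mS^{-T}$.

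The main technical obstacle is really nothing deep: it is the careful bookkeeping required to keep the factor $\d(0)$ (rather than $\d(\mu)$) in front of $\|h\|_{T,\mu}$ in \eqref{m.15}, since this is what later allows one to treat $\mR$ as a bounded perturbation in the subsequent observability argument. Everything else reduces to the standard tame composition and product estimates collected in Section \ref{sec:tame}.
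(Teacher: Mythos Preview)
Your proposal is correct and follows essentially the same approach as the paper, which simply says ``Estimate $\|\g\pa_x^{-1}h\|_{T,\mu}$ by the usual tame estimates for the product of two functions (Lemma~\ref{lemma:tame basic}), then use Neumann series in its tame version.'' Your ``iterate and absorb'' argument via the identity $\mS^{-1}=I-\g\pa_x^{-1}\mS^{-1}$ is precisely one standard way to implement the tame Neumann series, and your careful tracking of the $\d(0)$ factor in front of $\|h\|_{T,\mu}$ for $\mR$ is exactly the point.
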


\begin{proof}
Estimate $\| \g \pa_x^{-1} h \|_{T,\mu}$ by the usual tame estimates for the product of two functions (Lemma \ref{lemma:tame basic}), 
then use Neumann series in its tame version. 
\end{proof}

\section{Observability}
\label{sec:obs}

In this section we prove the observability of linear operators of the form \eqref{L0}. Such observability property will be used in Section \ref{section:con} in order to prove controllability of the linearized problem.
We split the proof into several simple lemmas, 
starting with a direct consequence of Ingham inequality. Since we actually need observability of a Cauchy problem flowing backwards in time (see Lemma \ref{lemma:LM}) with datum at time $T$, we will accordingly state our lemmas. 

\begin{lemma}
[Ingham inequality for $\pa_t + m \pa_{xxx}$]
\label{lemma:Ingham}
For every $T>0$ there exists a positive constant $C_1(T)$ such that, 
for all $(w_n)_{n \in \xZ} \in \ell^2(\xZ , \xC)$, all $m \geq 1/2$,  
$$
\int_{0}^T \Big| \sum_{n\in\xZ} w_n e^{i m n^3 t} \Big|^2\, dt
\geq C_1(T) \sum_{n\in\xZ}|w_n|^2.
$$
\end{lemma}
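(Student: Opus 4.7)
The plan is to reduce the inequality to an $m$-independent one via time rescaling, and then invoke a generalized Ingham inequality that handles the cubic frequencies $\{n^3 : n \in \xZ\}$ for arbitrarily short time intervals.

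First, the change of variable $s = mt$ gives
\[
\int_0^T \Big| \sum_n w_n e^{imn^3 t}\Big|^2 dt = \frac{1}{m}\int_0^{mT} \Big|\sum_n w_n e^{in^3 s}\Big|^2 ds,
\]
so the statement reduces to proving that
\[
K(T') \defn \inf\Big\{ \int_0^{T'} \Big| \sum_n w_n e^{in^3 s}\Big|^2 ds : \sum_n |w_n|^2 = 1 \Big\} > 0
\]
for every $T' > 0$, together with a lower bound on $K(mT)/m$ that is uniform in $m \geq 1/2$.

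Positivity of $K(T')$ relies on two spectral properties of $\mu_n = n^3$: (i) a uniform gap $|n^3 - k^3| \geq 1$ for $n \neq k$, because cubes of distinct integers are distinct integers; (ii) the infinite asymptotic gap $\mu_{n+1} - \mu_n = 3n^2 + 3n + 1 \to \infty$. Given $T' > 0$, I would choose $N$ so large that $3N^2 + 3N + 1 > 2\pi/T'$; classical Ingham then provides a positive lower bound for the $L^2(0,T')$-norm of the high-frequency tail $\sum_{|n| \geq N} w_n e^{in^3 s}$ in terms of $\sum_{|n| \geq N} |w_n|^2$. For the finite low-frequency part $|n| < N$, the exponentials $\{e^{in^3 s}\}_{|n|<N}$ are a finite collection of linearly independent entire functions of $s$. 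A standard compactness argument then produces $K(T') > 0$: were it to fail, a minimizing sequence of normalized coefficients would, by the tail estimate, concentrate on the indices $|n| < N$ and, up to extraction, converge to a nontrivial exponential polynomial whose restriction to $[0,T']$ vanishes identically, contradicting real-analyticity and linear independence.

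For uniformity in $m \geq 1/2$, the key observations are that $K$ is nondecreasing in $T'$ and that classical Ingham with gap $1$ gives the explicit linear lower bound $K(T') \geq c_0(T' - 2\pi)$ for $T' > 2\pi$. Splitting into the cases $mT \leq 4\pi$ and $mT \geq 4\pi$: in the first case $1/m \geq T/(4\pi)$ and $K(mT) \geq K(T/2)$ by monotonicity, so $K(mT)/m \geq T K(T/2)/(4\pi)$; in the second case, $K(mT)/m \geq c_0(mT - 2\pi)/m \geq c_0 T/2$. The minimum of these two positive quantities depends only on $T$ and furnishes the required $C_1(T)$.

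The main obstacle is the nonconstructive nature of the generalized Ingham step, which gives no explicit formula for $K(T')$ when $T' \leq 2\pi$. This is no real hindrance here: the uniformity in $m$ is recovered by coupling the qualitative positivity of $K$ with the explicit linear growth of $K$ at infinity furnished by classical Ingham.
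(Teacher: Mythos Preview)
Your proof is correct. Both you and the paper rely on the same core ingredient: the generalized Ingham inequality for a frequency sequence with uniform positive gap and infinite asymptotic gap (here $\{n^3\}$, with minimal gap $1$ and gap $\sim 3n^2$ at infinity), which is precisely Theorem~4.3 in \cite{Micu-Zuazua} cited by the paper. The difference lies in how uniformity in $m\ge 1/2$ is obtained. The paper works directly with the frequencies $\{mn^3\}$ on $[0,T]$ and simply points out that, since the minimal gap $|mn^3-mk^3|\ge m\ge\tfrac12$ is bounded below independently of $m$, one can follow the proof in \cite{Micu-Zuazua} and see that the resulting constant depends only on $T$ and this gap bound. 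You instead rescale time to work with the fixed sequence $\{n^3\}$ on a variable interval $[0,mT]$, and then recover uniformity by combining monotonicity of $K$ with the explicit linear growth $K(T')\gtrsim T'$ furnished by classical Ingham once $T'>2\pi$. Your route is a bit longer but more self-contained: it does not ask the reader to re-examine the proof of the cited theorem, and it yields an explicit $C_1(T)=\min\{T K(T/2)/(4\pi),\,c_0T/2\}$.
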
 

\begin{proof} 
See, for example, Theorem 4.3 in Section 4.1 of \cite{Micu-Zuazua}. 
The fact that the constant $C_1(T)$ does not depend on $m$ is obtained by 
closely following the proof in \cite{Micu-Zuazua}, 
and taking into account the lower bound 
for the distance between two different eigenvalues 
$|m n^3 - m k^3| \geq m \geq \frac12$, for all $n, k \in \Z$, $n \neq k$. 
\end{proof}

The following observability result is classical 
(see, e.g., \cite{Russell-Zhang} for a closely related result); 
for completeness, we also give here its proof.

\begin{lemma}[Observability for $\pa_t + m \pa_{xxx}$]
\label{lemma:Obs1} 
Let $T > 0$, and let $\om \subset \T$ be an open set.
Let $v_T \in L^2(\T)$, $m \geq 1/2$, and let $v$ satisfy
\begin{equation} \label{Ob1}
\pa_t v + m \pa_{xxx} v = 0, \quad v(T) = v_T.
\end{equation}
Then 
\begin{equation} \label{Ob2}
\int_0^T \int_{\om} |v(t,x)|^2 \, dx \, dt 
\geq C_2 \| v_T \|_{L^2_x}^2
\end{equation}
with $C_2 := C_1(T) |\om|$,
where $C_1(T)$ is the constant of Proposition \ref{lemma:Ingham}, 
and $|\om|$ is the Lebesgue measure of $\om$.
\end{lemma}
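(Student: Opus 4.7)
The plan is to derive the observability inequality by reducing the left-hand side, via Fubini's theorem, to a pointwise-in-$x$ application of the Ingham-type inequality from Lemma \ref{lemma:Ingham}.

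First I would write the Fourier expansion of the datum $v_T(x) = \sum_{n \in \Z} w_n e^{inx}$, where $(w_n) \in \ell^2(\Z,\C)$ with $\sum_n |w_n|^2 = \|v_T\|_{L^2_x}^2$. Solving the backward-in-time linear equation \eqref{Ob1} Fourier-mode by Fourier-mode, each coefficient $c_n(t)$ satisfies $c_n' = i m n^3 c_n$ with $c_n(T) = w_n$, so
\[
v(t,x) = \sum_{n \in \Z} w_n \, e^{i m n^3 (t-T)} \, e^{inx}.
\]
The point is that this is, for each fixed $x \in \T$, a series of the exact form to which Lemma \ref{lemma:Ingham} applies: set $\tilde w_n(x) := w_n \, e^{-i m n^3 T} e^{inx}$, so that $|\tilde w_n(x)| = |w_n|$ and $v(t,x) = \sum_n \tilde w_n(x) e^{i m n^3 t}$.

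Next I would apply Lemma \ref{lemma:Ingham} pointwise in $x$, using crucially that the Ingham constant $C_1(T)$ depends only on $T$ (in particular, it is independent of $x$ and of $m \geq 1/2$). This yields, for every $x \in \T$,
\[
\int_0^T |v(t,x)|^2 \, dt
\;=\; \int_0^T \Bigl| \sum_{n\in\Z} \tilde w_n(x) \, e^{i m n^3 t} \Bigr|^2 dt
\;\geq\; C_1(T) \sum_{n\in\Z} |\tilde w_n(x)|^2
\;=\; C_1(T) \, \|v_T\|_{L^2_x}^2.
\]

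Finally, I would integrate this pointwise bound over $x \in \om$, interchanging the order of integration by Fubini's theorem (the integrand is nonnegative and measurable):
\[
\int_0^T \int_\om |v(t,x)|^2 \, dx \, dt
\;=\; \int_\om \int_0^T |v(t,x)|^2 \, dt \, dx
\;\geq\; \int_\om C_1(T) \, \|v_T\|_{L^2_x}^2 \, dx
\;=\; C_1(T) \, |\om| \, \|v_T\|_{L^2_x}^2,
\]
which is exactly \eqref{Ob2} with $C_2 = C_1(T) |\om|$. There is no substantive obstacle in this argument: once the Fourier representation is written down, everything reduces to Ingham pointwise in $x$ plus Fubini; the only subtlety is checking that the Ingham constant in Lemma \ref{lemma:Ingham} is uniform in $m$, which has already been established in that lemma.
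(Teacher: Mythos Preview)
Your proof is correct and follows essentially the same approach as the paper: expand $v_T$ in Fourier series, write $v(t,x)$ as a sum of exponentials $e^{imn^3 t}$ with $x$-dependent coefficients of modulus $|w_n|$, apply Ingham's inequality pointwise in $x$, and integrate over $\omega$. The only differences are notational.
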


\begin{proof} Let $v_T(x) = \sum_{n \in \Z} a_n e^{inx}$, so that 
$v(t,x) = \sum_{n \in \Z} w_n(x) e^{im n^3 t}$ where $w_n(x) := a_n e^{i(nx - mn^3 T)}$. 
By Lemma \ref{lemma:Ingham}, for each $x \in \T$ we have
\[
\int_0^T \bigg| \sum_{n \in \Z} w_n(x) e^{imn^3 t} \bigg|^2 \, dt 
\geq C_1(T) \sum_{n \in \Z} |w_n(x)|^2 
= C_1(T) \sum_{n \in \Z} |a_n|^2 
= C_1(T) \| v_T \|_{L^2(\T)}^2 \,,
\]
then we integrate over $x \in \om$. 
\end{proof}

\begin{lemma}[Observability of $\ttL_5 := \pa_t + m \pa_{xxx} + \mR$] 
\label{lemma:Obs2}
Let $T > 0$, let $\om \subset \T$ be an open set and let $m \geq 1/2$. 
Let $\mR \in C([0,T], \mL(L^2_x))$, with 
$\| \mR(t)h \|_0 \leq r_0 \| h \|_0$ for all $h \in L^2_x$, 
all $t \in [0,T]$, where $r_0$ is a positive constant.  
Let $v_T \in L^2(\T)$ and let $v \in C([0,T], L^2_x)$ be the solution of the Cauchy problem 
\begin{equation} \label{Ob5}
\pa_t v + m \pa_{xxx} v + \mR v = 0, \quad v(T) = v_T,
\end{equation}
which is globally wellposed by Lemma \ref{lemma:App2}$(iii)$. 
Then 
\[
\int_0^T \int_{\om} |v(t,x)|^2 \, dx \, dt 
\geq C_3 \| v_T \|_{L^2_x}^2
\]
with $C_3 := C_2/4$, 
provided that $r_0$ is small enough 
(more precisely, $r_0$ smaller than a constant depending 
only on $T,C_2$ where $C_2$ is the constant in Lemma \ref{lemma:Obs1}).
\end{lemma}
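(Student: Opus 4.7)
The plan is to treat $\mL_5$ as a small perturbation of $\pa_t + m\pa_{xxx}$ and import the observability estimate from Lemma \ref{lemma:Obs1}. Let $v_0 \in C([0,T], L^2_x)$ solve the unperturbed backward Cauchy problem
\[
\pa_t v_0 + m \pa_{xxx} v_0 = 0, \qquad v_0(T) = v_T,
\]
and set $w := v - v_0$. Then $w$ satisfies
\[
\pa_t w + m \pa_{xxx} w = - \mR v, \qquad w(T) = 0.
\]

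First I would control $\| v \|_{T,0}$. A standard backward energy estimate for \eqref{Ob5}, using that $m \pa_{xxx}$ is skew-adjoint on $L^2_x$ and the bound $\| \mR(t) h \|_0 \leq r_0 \| h \|_0$, gives $\tfrac{d}{dt} \| v(t) \|_0^2 \geq - 2 r_0 \| v(t) \|_0^2$, hence by Gronwall $\| v(t) \|_0 \leq e^{r_0 (T-t)} \| v_T \|_0$ for $t \in [0,T]$. In particular $\| v \|_{T,0} \leq e^{r_0 T} \| v_T \|_0 \leq 2 \| v_T \|_0$ as soon as $r_0 T \leq \log 2$. Next, applying the analogous energy inequality to the forced equation for $w$ (via Lemma \ref{lemma:App2}$(iii)$ applied backward in time), I get
\[
\| w(t) \|_0 \leq \int_t^T \| \mR(s) v(s) \|_0 \, ds \leq r_0 T \| v \|_{T,0} \leq 2 r_0 T \| v_T \|_0
\]
for all $t \in [0,T]$.

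Now I would combine this with Lemma \ref{lemma:Obs1}. Since $v_0 = v - w$, the elementary inequality $|v_0|^2 \leq 2|v|^2 + 2|w|^2$ rewrites as $|v|^2 \geq \tfrac12 |v_0|^2 - |w|^2$. Integrating over $(0,T) \times \om$ and using Lemma \ref{lemma:Obs1} together with $|\om| \leq 2\pi$ yields
\[
\int_0^T \!\! \int_\om |v|^2 \, dx \, dt
\geq \frac{1}{2} \int_0^T \!\! \int_\om |v_0|^2 \, dx \, dt - \int_0^T \!\! \int_\om |w|^2 \, dx \, dt
\geq \frac{C_2}{2} \| v_T \|_0^2 - 2\pi T \| w \|_{T,0}^2.
\]
Plugging in the bound on $\| w \|_{T,0}$ gives the lower bound $\bigl( \tfrac{C_2}{2} - 8\pi T^3 r_0^2 \bigr) \| v_T \|_0^2$, which is $\geq \tfrac{C_2}{4} \| v_T \|_0^2 = C_3 \| v_T \|_0^2$ provided $r_0 \leq \bigl( C_2/(32 \pi T^3) \bigr)^{1/2}$. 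Taking $r_0$ small enough to satisfy both this bound and $r_0 T \leq \log 2$ yields a threshold depending only on $T$ and $C_2$, as required.

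The only non-routine point is making sure the backward well-posedness/energy estimate quoted for both $v$ and $w$ is really the one provided by Lemma \ref{lemma:App2}$(iii)$, and that the constants involved depend only on $r_0$ and $T$ (not on the fine structure of $\mR$); since $\mR$ appears only through the operator norm bound $r_0$, this is immediate and there is no genuine obstacle beyond bookkeeping.
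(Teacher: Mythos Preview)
The proposal is correct and follows essentially the same approach as the paper: split $v$ into the free solution with the same final datum plus a remainder, apply Lemma~\ref{lemma:Obs1} to the free part, and absorb the remainder using smallness of $r_0$ together with the elementary inequality $(a+b)^2 \geq \tfrac12 a^2 - b^2$. The only cosmetic difference is that the paper writes the equation for the remainder as $(\pa_t + m\pa_{xxx} + \mR)v_2 = -\mR v_1$ (forcing by the \emph{free} solution, which has $\|v_1(t)\|_0 = \|v_T\|_0$ exactly), whereas you write it as $(\pa_t + m\pa_{xxx})w = -\mR v$ and therefore need the extra Gronwall step to bound $\|v\|_{T,0}$; both routes give the same conclusion with constants depending only on $T$ and $C_2$.
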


\begin{proof} 
Let $v_1$ be the solution of $\pa_t v_1 + m \pa_{xxx} v_1 = 0$,  $v_1(T) = v_T$, and 
let $v_2 := v - v_1$. Then $v_2$ solves 
\begin{equation} \label{Ob6}
(\pa_t + m \pa_{xxx} + \mR) v_2 = - \mR v_1, \quad v_2(T) = 0. 
\end{equation}
By \eqref{App19}, applied for $s=0$, $\a = 0$, $f = - \mR v_1$, we get 
\begin{equation}  \label{Ob6.1}
\| v_2 \|_{T,0} 
\leq 2^{4 T r_0} 4 T \| \mR v_1 \|_{T,0}
\leq 2^{4 T r_0} 4 T r_0 \| v_T \|_0.
\end{equation}
Using the elementary inequality $(a+b)^2 \geq \frac12 a^2 - b^2$ for all $a,b \in \R$,
\[
\int_0^T \int_\om |v|^2 dx dt 
\geq \frac12 \int_0^T \int_\om |v_1|^2 dx dt 
- \int_0^T \int_\om |v_2|^2 dx dt.
\]
The integral of $|v_1|^2$ is estimated from below by \eqref{Ob2}. 
The integral of $|v_2|^2$ is bounded by $T \| v_2 \|_{T,0}^2$, then use \eqref{Ob6.1}. 
\end{proof}

\begin{lemma}[Observability of $\ttL_4 := \pa_t + m \pa_{xxx} + a_{14}(t,x) \pa_x + a_{15}(t,x)$, 
$a_{14}$ with zero mean]
\label{lemma:Obs3}
There exists a universal constant $\s>0$ with the following property.
Let $T > 0$, and let $\om \subset \T$ be an open set.
Let $m \geq 1/2$ and
let $a_{14}(t,x)$, $a_{15}(t,x)$ be two functions, with 
$a_{14}, \pa_t a_{14}, a_{15} \in C([0,T],H^\s_x)$, 
\begin{equation}\label{ab picc}
\int_\T a_{14}(t,x) \, dx = 0 \ \  \forall t \in [0,T], \qquad   
\| a_{14}, \pa_t a_{14}, a_{15} \|_{T,\s} \leq \d.
\end{equation}
Let $v_T \in L^2(\T)$ and let $v \in C([0,T], L^2_x)$ be the solution of the Cauchy problem 
\begin{equation} \label{nob.8}
\ttL_4 v = 0, \quad v(T) = v_T,
\end{equation}
which is globally wellposed by Lemma \ref{lemma:App3}. Then 
\[
\int_0^T \int_{\om} |v(t,x)|^2 \, dx \, dt 
\geq C_4 \| v_T \|_{L^2_x}^2
\]
with $C_4 := C_3/16$, 
provided that $\d$ is small enough 
(more precisely, $\d$ smaller than a constant depending 
only on $T,C_3$). 
\end{lemma}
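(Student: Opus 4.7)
\emph{Plan.} I will follow the conjugation strategy of Step 5 in Section \ref{sec:regu}. The operator $\mS$ of \eqref{L30}--\eqref{L33} conjugates $\ttL_4$ to $\ttL_5 = \pa_t + m \pa_{xxx} + \mR$ (see \eqref{L34}), and Lemma \ref{lemma:Obs2} already provides observability for $\ttL_5$ on $\om$. So the plan is: (i) set $w(t) := \mS^{-1}(t) \, v(t)$ and show that $w$ solves the Cauchy problem $\ttL_5 w = 0$, $w(T) = \mS^{-1}(T) v_T$; (ii) apply Lemma \ref{lemma:Obs2} to $w$; (iii) transfer the resulting observability inequality from $w$ back to $v$ using the pointwise relation $v = \mS w$.

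\emph{Steps (i)--(ii).} Under \eqref{ab picc} with $\d$ small, Lemma \ref{lemma:mS} gives that $\mS$ and $\mS^{-1}$ are bounded on $L^2_x$ with norms $\leq 1 + C\d$, that $\|\mR(t) h\|_0 \leq C\d \,\| h\|_0$ (so the smallness hypothesis on $\mR$ required by Lemma \ref{lemma:Obs2} holds for $\d$ small enough), and that $\g = -\frac{1}{3m} \pa_x^{-1} a_{14}$ satisfies $\|\g\|_{L^\infty_x} \leq C\d$ uniformly in $t$. The identity $\mS^{-1} \ttL_4 \mS = \ttL_5$ implies $\ttL_5 w = \mS^{-1}(\ttL_4 v) = 0$. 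Lemma \ref{lemma:Obs2} then yields
\begin{equation*}
\int_0^T \int_\om |w(t,x)|^2 \, dx \, dt \ \geq \ C_3 \,\| w(T) \|_0^2 .
\end{equation*}

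\emph{Step (iii).} From $v = \mS w = w + \g \,\pa_x^{-1} w$ and $(a+b)^2 \geq \frac12 a^2 - b^2$ I obtain pointwise $|v|^2 \geq \frac12 |w|^2 - |\g \,\pa_x^{-1} w|^2$. Since $\pa_x^{-1}$ is bounded on $L^2(\T)$ by its Fourier definition, and since the linear well-posedness bound for $\ttL_5$ provided by Lemma \ref{lemma:App2} yields $\| w(t) \|_0 \leq C(T) \| w(T) \|_0$ for all $t \in [0,T]$, the nonlocal error is estimated as
\begin{equation*}
\int_0^T \int_\om | \g \,\pa_x^{-1} w |^2 \, dx\, dt
\ \leq \ C \d^2 \int_0^T \| w(t) \|_0^2 \, dt
\ \leq \ C' \d^2 \, \| w(T) \|_0^2 .
\end{equation*}
Choosing $\d$ so small that $C' \d^2 \leq C_3 / 4$ and combining with the observability of $w$,
\begin{equation*}
\int_0^T \int_\om |v|^2 \, dx \, dt \ \geq \ \tfrac{1}{2} C_3 \, \| w(T) \|_0^2 - C' \d^2 \, \| w(T) \|_0^2 \ \geq \ \tfrac{C_3}{4} \, \| w(T) \|_0^2 .
\end{equation*}
Finally, $v_T = \mS(T) \, w(T)$ together with $\| \mS(T) h \|_0 \leq (1 + C\d) \| h \|_0 \leq 2 \| h \|_0$ gives $\| w(T) \|_0 \geq \frac12 \| v_T \|_0$, producing the constant $C_4 = C_3/16$ stated in the lemma.

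\emph{Main obstacle.} The one nontrivial point is that $\mS - I = \g \,\pa_x^{-1}$ is \emph{nonlocal}, so $v$ and $w$ cannot be compared pointwise on $\om$. The way around this is to trade locality for the smallness of $\g = O(\d)$: the nonlocal error is controlled by the \emph{global} $L^2$-norm of $w$, which is in turn tied to $\| w(T) \|_0$ by the linear well-posedness estimate of Lemma \ref{lemma:App2}. Provided $\d$ is small enough (as already required for Lemmas \ref{lemma:mS} and \ref{lemma:Obs2}), this error is dominated by the observability gain and the argument closes.
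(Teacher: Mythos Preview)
Your proof is correct and follows essentially the same approach as the paper's. The only cosmetic difference is the direction of the splitting: the paper writes $\tilde v = v + (\mS^{-1}-I)v$ and bounds $\int_0^T\int_\om|\tilde v|^2$ from above by $2\int_0^T\int_\om|v|^2$ plus an $O(\d^2)$ error (using Lemma~\ref{lemma:App3} for $v$), whereas you write $v = w + \g\,\pa_x^{-1}w$ and bound $\int_0^T\int_\om|v|^2$ from below by $\tfrac12\int_0^T\int_\om|w|^2$ minus the same kind of error (using Lemma~\ref{lemma:App2} for $w$); both routes lead to $C_4=C_3/16$.
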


\begin{proof}
Following the procedure of Section \ref{step-5}, 
we consider the transformation $\mS$ in \eqref{L30}, \eqref{L33}, 
which conjugates $\ttL_4$ to 
\[
\ttL_5 := \mS^{-1} \ttL_4 \mS 
= \pa_t + m \pa_{xxx} + \mR, 
\]
where the operator $\mR$ is defined in \eqref{L34}, \eqref{L32}, 
it belongs to $C([0,T], \mL(L^2_x))$, and satisfies the bounds in Lemma \ref{lemma:mS}.
Let $v$ be the solution of \eqref{nob.8}, and define $\tilde v := \mS^{-1} v$. 
Then $\tilde v$ solves 
$\ttL_5 \tilde v = 0$, $\tilde v(T) = \tilde v_T$ 
where $\tilde v_T := \mS^{-1}(T) v_T$, 
and therefore Lemma \ref{lemma:Obs2} applies to $\tilde v$ if $\d$ is sufficiently small. 
By Lemmas \ref{lemma:mS}, \ref{lemma:App3} and Remark \ref{rem:Lk} we get 
\[
\int_0^T \int_\om |(\mS^{-1} - I)v|^2 \, dx \, dt 
\leq T \| (\mS^{-1} - I)v \|_{T,0}^2  
\leq C \d^2 \| v \|_{T,0}^2 
\leq C' \d^2 \| v_T \|_0^2 
\]
for some constant $C'$ depending on $T$. 
We split $\tilde v = v + (\mS^{-1} - I)v$, and we get
\[
\int_0^T \int_\om | \tilde v|^2 \, dx \, dt 
\leq 2 \int_0^T \int_\om |v|^2 \, dx \, dt + 2 C' \d^2 \| v_T \|_0^2.
\]
Moreover $\| v_T \|_0 = \| \mS(T) v_T \|_0 \leq 2 \| \tilde v_T \|_0$, 
and the thesis follows for $\d$ small enough.
\end{proof}

\begin{lemma}[Observability of $\ttL_3 := \pa_t + m \pa_{xxx} + a_{12}(t,x) \pa_x + a_{13}(t,x)$]
\label{lemma:Obs4}
There exists a universal constant $\s>0$ with the following property.
Let $T > 0$, and let $\om \subset \T$ be an open set and let $m \geq 1/2$. 
Let $a_{12}(t,x)$, $a_{13}(t,x)$ be two functions, with 
$a_{12}, \pa_t a_{12}, a_{13} \in C([0,T],H^\s_x)$, 
\begin{equation}\label{cd piccoli}
\| a_{12}, \pa_t a_{12}, a_{13} \|_{T,\s} \leq \d.
\end{equation}
Let $v_T \in L^2(\T)$ and let $v \in C([0,T], L^2_x)$ be the solution of the Cauchy problem 
\begin{equation} \label{nob.10}
\ttL_3 v = 0, \quad v(T) = v_T,
\end{equation}
which is globally wellposed by Lemma \ref{lemma:App4}. Then 
\begin{equation} \label{nob.11}
\int_0^T \int_{\om} |v(t,x)|^2 \, dx \, dt 
\geq C_5 \| v_T \|_{L^2_x}^2
\end{equation}
for some $C_5 > 0$ depending on $T,\om$, provided that $\d$ in \eqref{cd piccoli} is sufficiently small (more precisely, $\d$ smaller than a constant depending on $T,\om,C_4$).
\end{lemma}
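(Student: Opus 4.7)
The plan is to reduce the observability of $\ttL_3$ to the observability of $\ttL_4$ from Lemma \ref{lemma:Obs3}, by means of the translation $\mT$ of Step 4 (Subsection \ref{step-4}). Recall from \eqref{L25}--\eqref{L29} that $\mT^{-1} \ttL_3 \mT = \ttL_4$ with $(\mT h)(t,y) := h(t, y + p(t))$, $p$ defined in \eqref{L28}, and that the resulting coefficients satisfy $\int_\T a_{14}(t,x)\, dx = 0$. Moreover, Lemma \ref{lemma:mT} controls both $\| a_{14}, \pa_t a_{14}, a_{15} \|_{T,s}$ and $\sup_{[0,T]}|p(t)|$ in terms of $\| a_{12}, \pa_t a_{12}, a_{13} \|_{T,s+1}$. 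Choosing the universal constant $\s$ in Lemma \ref{lemma:Obs4} one unit larger than the analogous constant in Lemma \ref{lemma:Obs3}, the smallness assumption \eqref{ab picc} is satisfied as soon as $\d$ in \eqref{cd piccoli} is small enough.

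Setting $\tilde v := \mT^{-1} v$, we have $\tilde v(t,z) = v(t, z - p(t))$, which solves $\ttL_4 \tilde v = 0$ with $\|\tilde v(T)\|_0 = \|v_T\|_0$. The key technical point is the choice of an observation set for $\tilde v$ that can be transferred back to $\om$: after the change of variable $x = z - p(t)$ at fixed $t$,
\[
\int_0^T \int_{\tilde \om} |\tilde v(t,z)|^2 \, dz \, dt = \int_0^T \int_{\tilde \om - p(t)} |v(t,x)|^2 \, dx \, dt
\]
for any open set $\tilde \om \subset \T$, so what is needed is the uniform inclusion $\tilde \om - p(t) \subset \om$ for all $t \in [0,T]$.

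I would arrange this by fixing any $x_0 \in \om$ and $r > 0$ with $B(x_0, 2r) \subset \om$, further shrinking $\d$ so that Lemma \ref{lemma:mT} yields $\sup_{[0,T]}|p(t)| < r$, and setting $\tilde \om := B(x_0, r)$; then $\tilde \om - p(t) \subset B(x_0, 2r) \subset \om$ for every $t$. Applying Lemma \ref{lemma:Obs3} to $\tilde v$ on $\tilde \om$ gives $\int_0^T \int_{\tilde \om} |\tilde v|^2 \,dz\,dt \geq C_4 \|v_T\|_0^2$, and combining with the previous display proves \eqref{nob.11} with some $C_5 > 0$ depending on $T$ and $\om$. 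The only point requiring genuine care is the uniform control of the time-dependent translation, which is however immediate from the smallness of $p$; no further obstruction arises.
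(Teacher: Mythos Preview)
Your proof is correct and follows essentially the same approach as the paper: conjugate $\ttL_3$ to $\ttL_4$ via the translation $\mT$, choose a subinterval of $\om$ whose time-dependent translates by $-p(t)$ stay inside $\om$ (using the smallness of $\sup_t |p(t)|$ from Lemma \ref{lemma:mT}), and apply Lemma \ref{lemma:Obs3} on that subinterval. The only cosmetic difference is that the paper uses an interval $[\alpha_1,\beta_1]\subset\om$ while you use a centered ball $B(x_0,r)$; otherwise the argument, including the change-of-variable identity and the adjustment of $\s$ by one unit, matches.
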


\begin{proof}
Following the procedure of Section \ref{step-4}, 
we consider the transformation $\mT$ defined in \eqref{L25}, \eqref{L28}, 
which conjugates $\ttL_3$ to 
\[
\ttL_4 := \mT^{-1} \ttL_3 \mT 
= \pa_t + m \pa_{xxx} + a_{14}(t,x) \pa_x + a_{15}(t,x),
\]
where $a_{14}, a_{15}$ are defined in \eqref{L27}, 
and $\int_\T a_{14}(t,x) \, dx = 0$. 
By \eqref{m.13}, the function $p$ defined in \eqref{L28} satisfies $|p(t)| \leq C \d$ for all $t \in [0,T]$.  
Let $v$ be the solution of the Cauchy problem \eqref{nob.10}. 
Then $\tilde v := \mT^{-1} v$ solves $\ttL_4 \tilde v = 0$, $\tilde v(T) = \mT^{-1} (T) v_T$. 
Let $\om_1 = [\a_1, \b_1]$ be an interval contained in $\om$. 
For $\d$ small enough, one has 
\[
[\a_1 - p(t), \b_1 - p(t)]
\subseteq [\a_1 - \d, \b_1 + \d] \subset \om 
\quad \forall t \in [0,T].
\]
The change of variable $x - p(t) = y$, $dx = dy$ gives
\[
\int_0^T \int_{\om_1} |\tilde v(t,x)|^2 \, dx \, dt
= \int_0^T \int_{\a_1 - p(t)}^{\b_1 - p(t)} |v(t,y)|^2 \, dy \, dt
\leq \int_0^T \int_\om |v(t,y)|^2 \, dy \, dt.
\]
By \eqref{m.13}, for $\d$ small enough, 
Lemma \ref{lemma:Obs3} can be applied to $\tilde v$ on the interval $\om_1$ and the thesis follows, since $\| \tilde v (T) \|_0 = \| \mT\inv(T) v_T \|_0 = \| v_T \|_0$. 
\end{proof}

\begin{lemma}[Observability of $\ttL_2 := \pa_t + m \pa_{xxx} + a_8 (t,x) \pa_{xx} + a_9 (t,x) \pa_x + a_{10} (t,x)$]
\label{lemma:Obs4.1} 
There exists a universal constant $\s>0$ with the following property.
Let $T > 0$, and let $\om \subset \T$ be an open set and let $m \geq 1/2$. 
Let $a_8(t,x)$, $a_9(t,x)$, $a_{10}(t,x)$ be three functions, with 
$a_8, \pa_t a_8, a_9, \pa_t a_9, a_{10} \in C([0,T],H^\s_x)$, 
\begin{equation}\label{a89dieci}
\int_\T a_8(t,x) \, dx = 0 \ \  \forall t \in [0,T], \qquad   
\| a_8, \pa_t a_8, a_9, \pa_t a_9, a_{10} \|_{T,\s} \leq \d.
\end{equation}
Let $v_T \in L^2(\T)$ and let $v \in C([0,T], L^2_x)$ be the solution of the Cauchy problem 
\begin{equation} \label{cau.1}
\ttL_2 v = 0, \quad v(T) = v_T,
\end{equation}
which is globally wellposed by Lemma \ref{lemma:App5}. Then 
\begin{equation} \label{bob.1}
\int_0^T \int_{\om} |v(t,x)|^2 \, dx \, dt 
\geq C_6 \| v_T \|_{L^2_x}^2
\end{equation}
for some $C_6 > 0$ depending on $T,\om$, provided that $\d$ in \eqref{a89dieci} is sufficiently small 
(more precisely, $\d$ smaller than a constant depending on $T,\om,C_5$).
\end{lemma}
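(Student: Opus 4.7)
The plan is to mimic the strategy used in Lemmas \ref{lemma:Obs3} and \ref{lemma:Obs4}: conjugate $\ttL_2$ with the transformation of Section \ref{step-3} that kills the $a_8 \pa_{xx}$ term, then invoke the observability result already established for the resulting operator. Concretely, I would use the multiplication operator $\mM$ of \eqref{M1}, \eqref{M5} (built precisely from the coefficient $a_8$ of the present lemma) to pass from $\ttL_2$ to $\ttL_3$.

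First, I would verify that Lemma \ref{lemma:mM} applies: the zero-mean hypothesis on $a_8(t,\cdot)$ in \eqref{a89dieci} is exactly the condition needed in Section \ref{step-3}, and once $\d$ in \eqref{a89dieci} is taken smaller than the threshold of Lemma \ref{lemma:mM}, the operator $\mM$ and its inverse $\mM^{-1}$ are well defined and uniformly bounded on $C([0,T], L^2_x)$. From \eqref{M7} one gets
\[
\mM^{-1} \ttL_2 \mM = \ttL_3 = \pa_t + m \pa_{xxx} + a_{12}(t,x) \pa_x + a_{13}(t,x),
\]
where the new coefficients satisfy $\| a_{12}, \pa_t a_{12}, a_{13} \|_{T,\s} \leq C \d$ via \eqref{M10} (up to enlarging the universal constant $\s$ to absorb the loss in Lemma \ref{lemma:mM}). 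Setting $\tilde v := \mM^{-1} v$, the function $\tilde v$ solves $\ttL_3 \tilde v = 0$ with $\tilde v(T) = \mM^{-1}(T) v_T$.

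Then I would apply Lemma \ref{lemma:Obs4} to $\tilde v$ on the same set $\om$, which is legitimate provided $\d$ is further reduced so that $C\d$ is below the smallness threshold of Lemma \ref{lemma:Obs4}. This yields
\[
\int_0^T \int_\om |\tilde v(t,x)|^2 \, dx \, dt \geq C_5 \| \tilde v_T \|_0^2.
\]
The decisive simplification compared to Lemma \ref{lemma:Obs4} is that $\mM$ is a pointwise multiplication by a function $q(t,x)$ with $\| q - 1 \|_{T,\s} \leq C \d$ by \eqref{M10}, so for $\d$ small enough $q$ and $q^{-1}$ are uniformly bounded above and below. In particular $\mM$ preserves the spatial support, and no shrinking of $\om$ is required (unlike the translation step of Lemma \ref{lemma:Obs4}). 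Hence $|\tilde v(t,x)|^2 \leq C |v(t,x)|^2$ pointwise on $\om$ and $\| v_T \|_0 \leq C \| \tilde v_T \|_0$, which combined with the previous inequality gives \eqref{bob.1} with $C_6 := C_5 / C^2$.

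The main obstacle is purely bookkeeping: one has to chain three smallness conditions on $\d$ (invertibility of $\mM$ from Lemma \ref{lemma:mM}, the smallness required by Lemma \ref{lemma:Obs4} applied with coefficients of size $C\d$, and $q$ bounded away from zero) and fix a universal $\s$ large enough to absorb the derivative losses appearing in both Lemma \ref{lemma:mM} and Lemma \ref{lemma:Obs4}. No new analytical ingredient is needed beyond what has already been set up in Section \ref{sec:regu} and in the preceding observability lemmas.
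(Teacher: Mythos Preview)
Your proposal is correct and follows essentially the same approach as the paper: conjugate $\ttL_2$ to $\ttL_3$ via the multiplication operator $\mM$ of Section \ref{step-3}, apply Lemma \ref{lemma:Obs4} to $\tilde v = \mM^{-1} v$, and then transfer the estimate back to $v$ using that $q$ is pointwise close to $1$. The paper phrases the last step slightly differently, writing $|v|^2 = |\tilde v|^2 + (|q|^2 - 1)|\tilde v|^2$ and bounding the error term by $C\d \|\tilde v\|_{T,0}^2$ (using the well-posedness bound $\|\tilde v\|_{T,0} \leq C \|\tilde v_T\|_0$ from Lemma \ref{lemma:App4}), but this is equivalent to your pointwise bound $|\tilde v|^2 \leq C |v|^2$.
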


\begin{proof}
Following the procedure of Section \ref{step-3}, 
we consider the multiplication operator $\mM$ 
defined in \eqref{M1}, \eqref{M5}, 
which conjugates $\ttL_2$ to 
\[
\mM^{-1} \ttL_2 \mM = \ttL_3, \quad 
\ttL_3 = \pa_t + m \pa_{xxx} + a_{12}(t,x) \pa_x + a_{13}(t,x),
\]
where $a_{12}, a_{13}$ are defined in \eqref{M3}.
Let $v$ be the solution of the Cauchy problem \eqref{cau.1}. 
Then $\tilde v := \mM^{-1} v$ solves $\ttL_3 \tilde v = 0$, $\tilde v(T) = \mM\inv (T) v_T$. 
Using \eqref{M10}, we have
\begin{equation*}
\int_0^T \int_\om |v(t,x)|^2 \, dx \, dt 
= \int_0^T \int_\om |\tilde v|^2 \, dx \, dt 
+ \int_0^T \int_\om |\tilde v|^2 (|q|^2 -1) \, dx \, dt
\geq (C_5 - C \d) \| v_T \|_0^2.
\end{equation*}
The first of the two integrals has been estimated from below by applying Lemma \ref{lemma:Obs4} to $\ttL_3$ (by Lemma \ref{lemma:mM}, this can be done provided that $\delta$ is sufficiently small). 
The second integral has been estimated using the bound \eqref{M10}, since $|q(t)-1| \leq C \| q-1 \|_{T,1} \leq C' \d$. 
Moreover, we have used the inequality $\| \tilde v \|_{T,0} \leq C \| \tilde v_T \|_0$ from Lemma \ref{lemma:App4}. 
The thesis follows with $C_6:= C_5/2$ by choosing $\delta$ small enough. 
\end{proof}

\begin{lemma}[Observability of $\ttL_1 := \pa_t + a_4(t) \pa_{xxx} + a_5 (t,x) \pa_{xx} + a_6(t,x) \pa_x + a_7(t,x)$] \label{lemma:Obs5}
$\phantom{.} $
There exists a universal constant $\s>0$ with the following property.
Let $T > 0$, and let $\om \subset \T$ be an open set. 
Let $a_4, a_5, a_6, a_7$ be four functions, with
$a_4\in C^1([0,T], \mathbb R)$, 
$a_5, \pa_t a_5, a_6, \pa_t a_6, a_7 \in C([0,T],H^\s_x)$, satisfying
\begin{equation}\label{a467 piccoli}
\int_\T a_5(t,x) \, dx = 0 \ \  \forall t \in [0,T], \qquad 
\| a_5, \pa_t a_5, a_6, \pa_t a_6, a_7\|_{T,\s} 
+ |a_4-1, a'_4|_T \leq \d.
\end{equation}
Let $v_T\in L^2 (\T)$ and let $v \in C([0,T], L^2_x)$ be the solution of the Cauchy problem 
\begin{equation} \label{nob.20}
\ttL_1 v = 0, \quad v(T) = v_T,
\end{equation}
which is globally wellposed by Lemma \ref{lemma:App6}. Then 
\begin{equation} \label{nob.21}
\int_0^T \int_{\om} |v(t,x)|^2 \, dx \, dt 
\geq C_7 \| v_T \|_{L^2_x}^2
\end{equation}
for some $C_7 > 0$ depending on $T,\om$, provided that $\d$ in \eqref{a467 piccoli} is sufficiently small (more precisely, $\d$ smaller than a constant depending on $T,\om,C_6$).
\end{lemma}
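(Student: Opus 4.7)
The plan is to use the time reparametrization of Section \ref{step-2}, which conjugates $\ttL_1$ to an operator of the form $\ttL_2$ for which observability has already been established in Lemma \ref{lemma:Obs4.1}. Concretely, from \eqref{L23} we have $\mB^{-1}\ttL_1\mB = \rho(\t)\,\ttL_2$, where $\ttL_2 = \pa_\t + m\pa_{yyy} + a_8\pa_{yy} + a_9\pa_y + a_{10}$ with coefficients given by \eqref{L22}, \eqref{L24} and the constant $m$ defined in \eqref{L22}. By Lemma \ref{lemma:mB}, provided $\d$ in \eqref{a467 piccoli} is small enough, one has $|m-1|\leq C\d$, in particular $m\geq 1/2$, and
\[
|\rho-1|_T + \|a_8,\pa_\t a_8,a_9,\pa_\t a_9,a_{10}\|_{T,\s} \leq C\d,
\qquad \int_\T a_8(\t,y)\,dy = 0\quad\forall \t\in[0,T],
\]
the last equality being \eqref{m.12}. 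Hence the hypotheses of Lemma \ref{lemma:Obs4.1} are satisfied by $\ttL_2$ with $\d$ replaced by $C\d$, taking the constant $\s$ of the present statement large enough.

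Next, I would transport the Cauchy problem \eqref{nob.20}. Set $\tilde v := \mB^{-1}v$, so that $\tilde v(\t,y)=v(\psi^{-1}(\t),y)$. Since $\psi(T)=T$, one has $\tilde v(T)=v_T$. Moreover, from $\ttL_1 v=0$ and $\mB^{-1}\ttL_1\mB = \rho\,\ttL_2$ together with the invertibility of $\mB$ and the positivity of $\rho$, it follows that $\ttL_2 \tilde v = 0$. Applying Lemma \ref{lemma:Obs4.1} (with $C\d$ in place of $\d$) to $\tilde v$ on the open set $\om$ yields
\[
\int_0^T\!\!\int_\om |\tilde v(\t,y)|^2\,dy\,d\t \;\geq\; C_6 \|v_T\|_0^2.
\]

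The final step is to convert the estimate for $\tilde v$ back to an estimate for $v$ via the change of variable $\t=\psi(t)$, $d\t = \psi'(t)\,dt$. Using $\psi'(t)=a_4(t)/m$ together with $|a_4-1|_T\leq\d$ and $|m-1|\leq C\d$, one has $|\psi'(t)|\leq 2$ for all $t\in[0,T]$ provided $\d$ is small enough; therefore
\[
\int_0^T\!\!\int_\om |\tilde v(\t,y)|^2\,dy\,d\t
= \int_0^T\!\!\int_\om |v(t,y)|^2\,\psi'(t)\,dy\,dt
\leq 2\int_0^T\!\!\int_\om |v(t,y)|^2\,dy\,dt,
\]
whence \eqref{nob.21} holds with $C_7 := C_6/2$.

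This proof is essentially routine, following the same template as Lemmas \ref{lemma:Obs3}--\ref{lemma:Obs4.1}: the only mild subtleties are verifying that the smallness assumption \eqref{a467 piccoli} on $\ttL_1$ implies, via Lemma \ref{lemma:mB}, both the zero-mean condition \eqref{m.12} and the smallness of the new coefficients needed to invoke Lemma \ref{lemma:Obs4.1}, and keeping track of the Jacobian $\psi'$ when pulling back the space-time integral. Neither presents a serious obstacle.
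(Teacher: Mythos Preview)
Your proof is correct and follows essentially the same approach as the paper: conjugate $\ttL_1$ to $\ttL_2$ via the time reparametrization $\mB$, apply Lemma~\ref{lemma:Obs4.1} to $\tilde v=\mB^{-1}v$, and undo the change of variable using the bound on $\psi'$. The only cosmetic difference is that the paper writes $1=\psi'(t)+(1-\psi'(t))$ and bounds the error term using $|\psi'-1|\leq C\d$, whereas you use $\psi'\leq 2$ directly; both yield $C_7=C_6/2$.
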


\begin{proof}
Following the procedure of Section \ref{step-2}, 
we consider the re-parametrization of time $\mB$ 
defined in \eqref{L17}, \eqref{L22}, 
which conjugates $\ttL_1$ to 
\[
\mB^{-1} \ttL_1 \mB = \rho \ttL_2, \quad 
\ttL_2 = \pa_\tau + m \pa_{xxx} + a_8(\tau,x) \pa_{xx} + a_9(\tau,x) \pa_x + a_{10}(\tau,x),
\]
where $\rho, a_8, a_9,a_10$ are defined in \eqref{L20}, \eqref{L24} and $\int_\T a_8(\t, x)=0$ for all $\t\in[0,T]$. 
Let $v$ be the solution of the Cauchy problem \eqref{nob.20}. 
Then $\tilde v := \mB^{-1} v$ solves $\ttL_2 \tilde v = 0$, $\tilde v(T) = \mB^{-1}(T) v_T$. 
Using \eqref{m.11}, we have
\begin{align*}
\int_0^T \int_\om |v(t,x)|^2 \, dx \, dt 
& = \int_0^T \int_\om |\tilde v(\psi(t),x)|^2 \, dx \, dt 
\\
& = \int_0^T \int_\om |\tilde v(\psi(t),x)|^2 [\psi'(t) + (1-\psi'(t))] \, dx \, dt 
\\
& = \int_0^T \int_\om |\tilde v(\tau,x)|^2 \, dx \, d\tau 
+ \int_0^T \int_\om |\tilde v(\psi(t),x)|^2 (1-\psi'(t)) \, dx \, dt 
\\
& \geq (C_6 - C \d) \| v_T \|_0^2.
\end{align*}
The first of the two integrals has been estimated from below by applying Lemma \ref{lemma:Obs4.1} to $\ttL_2$ (by Lemma \ref{lemma:mB}, this can be done provided that $\delta$ is sufficiently small). 
The second integral has been estimated using the bound \eqref{m.11} for $|\psi'(t)-1|$ 
and also the inequality $\| \tilde v \|_{T,0} \leq C \| \tilde v_T \|_0$ from Lemma \ref{lemma:App5}. 
The thesis follows with $C_7:= C_6/2$ by choosing $\delta$ small enough, since $\| \tilde v_T \|_0 = \| \mB\inv(T) v_T \|_0 = \| v_T \|_0$.
\end{proof}

\begin{lemma}
[Observability of $\ttL_0 := \pa_t + (1 + a_3) \pa_{xxx} + a_2 \pa_{xx} + a_1 \pa_x + a_0$]
\label{lemma:Obs6}
There exists a universal constant $\s>0$ with the following property.
Let $T > 0$, and let $\om \subset \T$ be an open set.
Let $c\in\R$ and $a_3(t,x), a_2(t,x), a_1(t,x), a_0(t,x)$ be four functions with 
$a_2 = c \pa_x a_3$, 
\begin{equation}\label{a3210 piccoli}
\| \pa_{tt}  a_3, \pa_t a_3, a_3, \pa_t a_1, a_1, a_0 \|_{T,\s} \leq \d.
\end{equation}
Let $v_T\in L^2 (\T)$ and let $v \in C([0,T], L^2_x)$ be the solution of the Cauchy problem 
\begin{equation} \label{nob.30}
\ttL_0 v = 0, \quad v(T) = v_T,
\end{equation}
which is globally wellposed by Lemma \ref{lemma:App7}. Then 
\begin{equation} \label{nob.31}
\int_0^T \int_{\om} |v(t,x)|^2 \, dx \, dt 
\geq C_8 \| v_T \|_{L^2_x}^2
\end{equation}
for some $C_8 > 0$ depending on $T,\om$, provided that $\d$ in \eqref{a3210 piccoli} is sufficiently small (more precisely, $\d$ smaller than a constant depending on $T,\om,C_7$).
\end{lemma}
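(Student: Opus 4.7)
My plan is to follow the exact same conjugation strategy used in Lemmas \ref{lemma:Obs3}--\ref{lemma:Obs5}: apply the change of space variable $\mA$ from Section \ref{step-1}, which conjugates $\ttL_0$ into an operator of the form $\ttL_1$ handled by Lemma \ref{lemma:Obs5}, and then transport the observability estimate back to $\ttL_0$.

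Concretely, I would fix $\s$ at least as large as the universal constant of Lemma \ref{lemma:Obs5} plus whatever is required by Lemma \ref{lemma:mA}. For $\d$ in \eqref{a3210 piccoli} small enough, Lemma \ref{lemma:mA} yields the operator $\mA$ satisfying the tame bounds \eqref{m.2}, and conjugates $\ttL_0$ to
\[
\ttL_1 := \mA^{-1} \ttL_0 \mA
= \pa_t + a_4(t) \pa_{yyy} + a_5(t,y) \pa_{yy} + a_6(t,y) \pa_y + a_7(t,y),
\]
with $\int_\T a_5(t,y)\,dy = 0$ (by \eqref{m.4}) and
$\| a_5, \pa_t a_5, a_6, \pa_t a_6, a_7 \|_{T,\s'} + |a_4-1, a_4'|_T \leq C\d$
(by \eqref{m.3}) for the $\s'$ required by Lemma \ref{lemma:Obs5}. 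Given $v$ solving \eqref{nob.30}, I would set $\tilde v := \mA^{-1} v$, so that $\ttL_1 \tilde v = 0$ with $\tilde v(T) = \mA^{-1}(T) v_T$, and from $v_T = \mA(T) \tilde v_T$ together with \eqref{m.2} deduce $\| v_T \|_0 \leq C \| \tilde v_T \|_0$.

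The only genuinely new step is comparing $\int_0^T \int_\om |v|^2\,dx\,dt$ with an integral of $|\tilde v|^2$ over a slightly shrunken open subset. I would pick an interval $[\a_1, \b_1] \subset \om$ and let $\om_1 := (\a_1 + \eta, \b_1 - \eta)$, where $\eta = C'\d$ is chosen so that $|\b(t,x)| \leq \eta/2$ for all $(t,x) \in [0,T]\times\T$ (possible by \eqref{m.3}). Since $|\b_x| \leq 1/2$, the map $x \mapsto y = x + \b(t,x)$ is a diffeomorphism of $\T$ mapping $[\a_1, \b_1]$ to an interval whose endpoints are within $\eta/2$ of $\a_1, \b_1$, so $\om_1$ sits inside the image for every $t \in [0,T]$. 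The change of variables $y = x + \b(t,x)$, with Jacobian $1 + \tilde\b_y \geq 1/2$ (again by \eqref{m.3} with $\d$ small), then yields
\[
\int_0^T \int_\om |v(t,x)|^2\,dx\,dt
\;\geq\; \int_0^T \int_{\om_1} |\tilde v(t,y)|^2 (1+\tilde\b_y)\,dy\,dt
\;\geq\; \frac{1}{2} \int_0^T \int_{\om_1} |\tilde v(t,y)|^2 \,dy\,dt.
\]

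To close, I would apply Lemma \ref{lemma:Obs5} to $\tilde v$ on the open set $\om_1$: for $\d$ sufficiently small (depending on $T, \om, C_7$), the hypotheses \eqref{a467 piccoli} are verified by the previous paragraph, so that the right-hand side above is bounded below by $\tfrac{1}{2} C_7 \| \tilde v_T \|_0^2 \geq \tfrac{1}{2} C_7 C^{-2} \| v_T \|_0^2$, which gives \eqref{nob.31} with $C_8 := C_7/(2C^2)$. The only delicate point in this whole argument is ensuring that $\om_1$ is covered by the image of $[\a_1,\b_1]$ for \emph{all} $t$ simultaneously and that the Jacobian stays uniformly positive; both are controlled by the uniform smallness of $\b, \tilde\b$ coming from \eqref{m.3}, so no new ideas beyond those already used in Lemmas \ref{lemma:Obs4}--\ref{lemma:Obs5} are needed.
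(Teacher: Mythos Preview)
Your proposal is correct and follows essentially the same approach as the paper. The only cosmetic difference is the direction of the change-of-variables bookkeeping: the paper applies Lemma~\ref{lemma:Obs5} to $\tilde v$ on a fixed interval $\om_1=[\a_1,\b_1]\subset\om$ and then checks that the preimage $\om_2(t)=\{x:x+\b(t,x)\in\om_1\}$ stays inside $\om$ (using $|\b|\leq C\d$ and $1/(1+\b_x)\leq 2$), whereas you shrink first in the $y$-variable and use $1+\tilde\b_y\geq 1/2$; both rely on the same smallness of $\b$ from \eqref{m.3}. One minor caution: since your $\om_1=(\a_1+\eta,\b_1-\eta)$ has $\eta=C'\d$ depending on $\d$, you should either note that the constant $C_7$ of Lemma~\ref{lemma:Obs5} is monotone in the observation set (so it does not degenerate as $\d\to 0$), or---as the paper implicitly does---fix $\eta$ once and for all and then take $\d$ small enough that $\|\b\|_{L^\infty}\leq \eta/2$.
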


\begin{proof} 
Following the procedure of Section \ref{step-1}, 
we consider the transformation $\mA$ 
defined in \eqref{L4}, \eqref{L12}, \eqref{L13}, \eqref{L14}, 
which conjugates $\ttL_0$ to 
\[
\mA^{-1} \ttL_0 \mA = \ttL_1 
= \pa_t + a_4(t) \pa_{xxx} + a_5(t,x) \pa_{xx} + a_6(t,x) \pa_x + a_7(t,x)
\]
(see \eqref{L15}), where $a_4, a_5, a_6, a_7$ are defined in \eqref{L8} and $\int_\T a_5(t, x)=0$ for all $t\in[0,T]$.
Let $v$ be the solution of the Cauchy problem \eqref{nob.30}. 
Then $\tilde v := \mA^{-1} v$ solves $\ttL_1 \tilde v = 0$, $\tilde v(T) = \tilde v_T$, 
where $\tilde v_0 := \mA^{-1}(0) v_0$.  
Let $\om_1 = [\a_1, \b_1] \subset \om$. 
By \eqref{m.3} in Lemma \ref{lemma:mA}, for $\d$ sufficiently small 
Lemma \ref{lemma:Obs5} applies to $\tilde v$ on $\om_1$, and 
\[
\int_0^T \int_{\om_1} |\tilde v|^2 \, dy \, dt 
\geq C_7 \| \tilde v_T \|_0^2.
\]
By Lemma \ref{lemma:mA}, $\| v_T \|_0 = \| \mA(T) \tilde v_T \|_0 \leq C \| \tilde v_T \|_0$. 
The change of integration variable $y = x + \b(t,x)$, $dy = (1 + \b_x(t,x)) dx$ gives
\begin{align*}
\int_0^T \int_{\om_1} |\tilde v|^2 \, dy \, dt
& = \int_0^T \int_{\om_1} | (\mA^{-1} v)(t,y)|^2 \, dy \, dt
\\
& = \int_0^T \int_{\om_2(t)} \frac{|v(t,x)|^2}{1 + \b_x(t,x)} \, dx \, dt 
\leq 2 \int_0^T \int_{\om} |v(t,x)|^2 \, dx \, dt,
\end{align*}
where $\om_2(t) := \{ x : x + \b(t,x) \in \om_1 \}$. 
We have used the fact that, for $\d$ small enough, $\om_2(t) \subset \om$, 
and the bound \eqref{m.3} for $|\b_x(t,x)| \leq C \| \b \|_{T,2} \leq C' \d$. 
\end{proof}

\section{Controllability}
\label{section:con}

In this section we prove the controllability of the linearized operator 
$\ttL_0$, using its observability (Lemma \ref{lemma:Obs6}), by means of the HUM method. 
We also prove higher regularity of the control.

\begin{lemma}
[Controllability of $\ttL_0$] 
\label{lemma:Con1}
Let $T > 0$, and let $\om \subset \T$ be an open set.
Let $a_3, a_2, a_1, a_0$ be four functions of $(t,x)$ with 
$a_2 = 2 \pa_x a_3$ satisfying \eqref{a3210 piccoli}.
Let $\ttL_0$ be the linear operator 
\begin{equation} \label{ttL0}
\ttL_0 := \pa_t + (1 + a_3) \pa_{xxx} + a_2 \pa_{xx} + a_1 \pa_x + a_0.
\end{equation}

\emph{(i) Existence.} 
There exist constants $\d_0, C$ such that, 
if $\d$ in \eqref{a3210 piccoli} is smaller than $\d_0$, 
then the following property holds.
Given any three functions $g_1(t,x)$, $g_2(x)$, $g_3(x)$, 
with $g_1 \in C([0,T],L^2_x)$, $g_2, g_3 \in L^2_x$, 
there exists a function $\ph \in C([0,T], L^2_x)$ 
such that the solution $h$ of the Cauchy problem 
\begin{equation} \label{C1}
\ttL_0 h = g_1 + \chi_\om \ph, \quad h(0) = g_2
\end{equation}
satisfies $h(T) = g_3$. 
(Note that the Cauchy problem \eqref{C1} is globally well-posed by Lemma \ref{lemma:App7}). 
Moreover 
\begin{equation} \label{C2}
\| \ph \|_{T,0} \leq C ( \| g_1 \|_{T,0} + \| g_2 \|_0 + \| g_3 \|_0 ).
\end{equation}

\emph{(ii) Uniqueness.} Let $\ttL_0^*$ be the linear operator 
\begin{equation} \label{C2.1}
\ttL_0^* \psi := - \pa_t \psi - \pa_{xxx} \{ (1 + a_3) \psi \} 
+ \pa_{xx} (a_2 \psi) - \pa_x (a_1 \psi) + a_0 \psi.
\end{equation}
The control $\ph$ in \emph{(i)} is the unique solution of the equation
$\ttL_0^* \ph = 0$ such that the solution $h$ of the Cauchy problem \eqref{C1} satisfies $h(T) = g_3$.
\end{lemma}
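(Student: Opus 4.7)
\medskip

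\noindent\textbf{Proof plan.}
The strategy is the standard Hilbert Uniqueness Method (HUM), with coercivity of the HUM functional supplied by the observability result in Lemma \ref{lemma:Obs6} (applied to the adjoint equation). First, by linearity of $\ttL_0$ I would reduce to the case $g_1 = 0$, $g_2 = 0$: let $h_1 \in C([0,T], L^2_x)$ solve $\ttL_0 h_1 = g_1$, $h_1(0) = g_2$ (globally well-posed by Lemma \ref{lemma:App7}, with $\| h_1 \|_{T,0} \leq C ( \| g_1 \|_{T,0} + \| g_2 \|_0 )$), and set $\tilde g_3 := g_3 - h_1(T) \in L^2_x$, whose $L^2$-norm is controlled by the right-hand side of \eqref{C2}. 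Thus it suffices to construct $\ph$ steering $0$ to $\tilde g_3$ under $\ttL_0 h = \chi_\om \ph$, with $\| \ph \|_{T,0} \leq C \| \tilde g_3 \|_0$.

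Next, set up the duality. For any $\psi_T \in L^2_x$, let $\psi \in C([0,T], L^2_x)$ be the (backward) Cauchy solution of $\ttL_0^* \psi = 0$, $\psi(T) = \psi_T$. Integration by parts in space and time on $\int_0^T \! \int_\T (\ttL_0 h) \psi \, dx\, dt = \int_0^T \! \int_\T (\chi_\om \ph) \psi \, dx\, dt$ yields the identity
\[
\int_\T h(T) \psi_T \, dx - \int_\T h(0) \psi(0) \, dx
= \int_0^T \int_\T \chi_\om \ph \, \psi \, dx \, dt.
\]
Following HUM, I would pick $\ph = \psi$ itself (so the control is $\chi_\om \psi$) and consider the functional
\[
J(\psi_T) := \frac12 \int_0^T \int_\T \chi_\om^2 \, |\psi|^2 \, dx \, dt - \int_\T \tilde g_3 \, \psi_T \, dx
\]
on $L^2_x$. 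The key step is coercivity: I would apply Lemma \ref{lemma:Obs6} to the operator $-\ttL_0^*$, which by Remark \ref{come fosse Ham} has the structure required by that lemma (indeed $a_2^* = \pa_x a_3^*$, i.e.\ $c=1$), and whose coefficients satisfy the smallness assumption \eqref{a3210 piccoli} provided $\d_0$ is small enough. Combining with the lower bound $\chi_\om^2 \geq \mathds{1}_{\om_0}$ on a nonempty open $\om_0 \subset \om$ where $\chi_\om \equiv 1$, observability gives $\int_0^T \! \int_\T \chi_\om^2 |\psi|^2 \, dx\, dt \geq C_8 \| \psi_T \|_0^2$. Hence $J$ is strictly convex, continuous, and coercive on $L^2_x$, and admits a unique minimizer $\psi_T^*$.

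The Euler--Lagrange equation for $\psi_T^*$ reads $\int_0^T \! \int_\T \chi_\om^2 \psi^* \eta \, dx\, dt = \int_\T \tilde g_3 \, \eta_T \, dx$ for every $\eta_T \in L^2_x$, where $\eta$ solves $\ttL_0^* \eta = 0$, $\eta(T) = \eta_T$. Setting $\ph := \psi^*$ and using again the duality identity, this is exactly the condition that the solution $h$ of $\ttL_0 h = \chi_\om \ph$, $h(0) = 0$, satisfies $h(T) = \tilde g_3$. The bound \eqref{C2} follows from $J(\psi_T^*) \leq J(0) = 0$, which gives $\int_0^T \! \int_\T \chi_\om^2 |\psi^*|^2 \leq 2 \| \tilde g_3 \|_0 \| \psi_T^* \|_0$; combining with the observability inequality yields $\| \psi_T^* \|_0 \leq C \| \tilde g_3 \|_0$, and then the well-posedness of the adjoint Cauchy problem (Lemma \ref{lemma:App7} applied to $-\ttL_0^*$) gives $\| \ph \|_{T,0} = \| \psi^* \|_{T,0} \leq C \| \psi_T^* \|_0 \leq C \| \tilde g_3 \|_0$. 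Part (ii) is now immediate: if $\ph_1, \ph_2$ are two solutions of $\ttL_0^* \ph = 0$ steering $0$ to $\tilde g_3$, then $\ph_1 - \ph_2 = \psi$ (with datum $\psi_T = \psi_T^{(1)} - \psi_T^{(2)}$) gives zero final state when used as $\chi_\om$-control, so the duality identity forces $\int_0^T \! \int_\T \chi_\om^2 |\psi|^2 \, dx\, dt = 0$; observability then gives $\psi_T = 0$, hence $\ph_1 = \ph_2$.

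The main technical point is the verification of the observability hypothesis for $\ttL_0^*$: one must check that the structural condition $a_2 = c\, \pa_x a_3$ is preserved by taking the formal adjoint (handled by Remark \ref{come fosse Ham}) and that the smallness assumption \eqref{a3210 piccoli} on the transformed coefficients $a_i^*$ follows, up to a universal constant, from the smallness of the original $a_i$. All other steps are routine once coercivity is in hand.
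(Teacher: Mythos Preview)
Your proof is correct and follows essentially the same HUM strategy as the paper, which phrases it via Lax--Milgram/Riesz on the bilinear form $B(\varphi_1,\psi_1)=\int_0^T\langle \chi_\omega \varphi,\psi\rangle\,dt$ (handling $g_1,g_2,g_3$ directly through the linear functional $\Lambda$ rather than by a preliminary reduction to $g_1=g_2=0$). One minor bookkeeping slip: with your choice $\varphi=\psi^*$ the duality identity produces $\int \chi_\omega|\psi^*|^2$ with the \emph{first} power of $\chi_\omega$, so $J$ and its Euler--Lagrange equation should carry $\chi_\omega$ rather than $\chi_\omega^2$ (equivalently, take $\varphi=\chi_\omega\psi^*$); this does not affect coercivity or any conclusion.
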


The proof of Lemma \ref{lemma:Con1} is given below, 
and it is based on the following classical lemma.
In this section we use the standard notation $\langle u , v \rangle := \int_\T u v \, dx$.

\begin{lemma} \label{lemma:LM} 
Let $a_3,a_2,a_1,a_0$ be functions satisfying \eqref{a3210 piccoli} and $a_2 = 2 \pa_x a_3$. 
Let $\ttL_0^*$ be the operator defined in \eqref{C2.1}. 
For every $(g_1, g_2, g_3)$ with $g_1 \in C([0,T],L^2_x)$, $g_2, g_3 \in L^2_x$ 
there exists a unique $\ph_1 \in L^2_x$ such that 
for all $\psi_1 \in L^2_x$, the solutions $\ph, \psi \in C([0,T],L^2_x)$ of the Cauchy problems
\begin{equation}  \label{C3.1}
\begin{cases}
\ttL_0^* \ph = 0 \\
\ph(T) = \ph_1
\end{cases}
\qquad 
\begin{cases}
\ttL_0^* \psi = 0 \\
\psi(T) = \psi_1
\end{cases}
\end{equation}
satisfy
\begin{equation}\label{LM1}
\int_0^T \langle g_1 + \chi_\om \ph , \psi \rangle \, dt 
+ \langle g_2 , \psi(0) \rangle 
- \langle g_3 , \psi(T) \rangle = 0 
\end{equation}
(note that the global well-posedness of the Cauchy problems \eqref{C3.1} follows 
from Lemma \ref{lemma:App7} and Remark \ref{rem:Lk}).
Moreover $\ph$ satisfies \eqref{C2}. 
\end{lemma}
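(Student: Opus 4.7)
The plan is to apply the classical HUM duality argument. Before doing so, I need two key inputs concerning the adjoint $\ttL_0^*$: backward well-posedness in $C([0,T],L^2_x)$ with the estimate $\|\psi\|_{T,0} \leq C_0 \|\psi_1\|_0$, and an observability inequality of the form $\int_0^T \int_{\om'} |\ph|^2 \,dx\,dt \geq C_* \|\ph_1\|_0^2$ on a suitable open subset $\om' \subset \om$. Both are obtained by noting (Remark \ref{come fosse Ham}) that $-\ttL_0^*$ is itself an operator of the form \eqref{L0}, with the new constant $c^* = 1$ and coefficients given by \eqref{Lstar.3}; these new coefficients still satisfy the smallness assumption \eqref{a3210 piccoli} up to a harmless multiplicative constant, provided the loss $\s$ in the statement is chosen large enough to absorb one extra $x$-derivative. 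Consequently, for $\d$ sufficiently small, Lemma \ref{lemma:App7} applied to $-\ttL_0^*$ yields the well-posedness, and Lemma \ref{lemma:Obs6} applied to $-\ttL_0^*$ on any nonempty open $\om' \subset \om$ on which $\chi_\om \equiv 1$ yields the observability.

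Next I would introduce on $L^2_x \times L^2_x$ the symmetric bilinear form $\Lm(\ph_1,\psi_1) := \int_0^T \la \chi_\om \ph,\psi \ra\, dt$ and the continuous linear functional $\ell(\psi_1) := -\int_0^T \la g_1,\psi\ra\, dt - \la g_2,\psi(0)\ra + \la g_3,\psi(T)\ra$, where $\ph,\psi$ are the solutions of \eqref{C3.1} with final data $\ph_1, \psi_1$. The well-posedness estimate immediately gives $|\Lm(\ph_1,\psi_1)| \leq C_0^2 T \|\ph_1\|_0 \|\psi_1\|_0$ and $|\ell(\psi_1)| \leq C(\|g_1\|_{T,0} + \|g_2\|_0 + \|g_3\|_0)\|\psi_1\|_0$, i.e.\ continuity of $\Lm$ and $\ell$. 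Coercivity then comes from observability: since $0 \leq \chi_\om \leq 1$ and $\chi_\om \equiv 1$ on $\om'$,
\[
\Lm(\ph_1,\ph_1) = \int_0^T \int_\T \chi_\om |\ph|^2 \, dx\,dt \geq \int_0^T \int_{\om'} |\ph|^2 \, dx\,dt \geq C_* \|\ph_1\|_0^2.
\]

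The Lax--Milgram theorem (or, equivalently, direct minimization of the coercive convex quadratic functional $\frac12 \Lm(\ph_1,\ph_1) - \ell(\ph_1)$ on $L^2_x$) then produces a unique $\ph_1 \in L^2_x$ with $\Lm(\ph_1,\psi_1) = \ell(\psi_1)$ for every $\psi_1 \in L^2_x$; rearranging this identity is exactly \eqref{LM1}. The estimate \eqref{C2} will follow by choosing $\psi_1 = \ph_1$: coercivity together with continuity of $\ell$ give
\[
C_* \|\ph_1\|_0^2 \leq \Lm(\ph_1,\ph_1) = \ell(\ph_1) \leq C(\|g_1\|_{T,0} + \|g_2\|_0 + \|g_3\|_0)\|\ph_1\|_0,
\]
which bounds $\|\ph_1\|_0$, and the well-posedness estimate upgrades this to the desired bound on $\|\ph\|_{T,0}$. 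The main (essentially only) technical point is the passage from $\ttL_0$ to $-\ttL_0^*$, i.e.\ checking that the conjugation formulas \eqref{Lstar.3} preserve the smallness hypothesis \eqref{a3210 piccoli}; beyond that, the argument is textbook HUM.
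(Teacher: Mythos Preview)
Your proposal is correct and follows essentially the same route as the paper: define the bilinear form $B(\ph_1,\psi_1)=\int_0^T\langle\chi_\om\ph,\psi\rangle\,dt$ and the linear functional coming from the data, use Remark~\ref{come fosse Ham} together with Lemma~\ref{lemma:App7} for continuity and Lemma~\ref{lemma:Obs6} for coercivity, then apply Lax--Milgram. Your explicit use of a subset $\om'\subset\om$ on which $\chi_\om\equiv 1$ to pass from $\int_\T\chi_\om|\ph|^2$ to the observability inequality is exactly the detail the paper leaves implicit.
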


\begin{proof} 
Given $\ph_1, \psi_1 \in L^2_x$, let $\ph, \psi$ be the solutions of the Cauchy problems
\eqref{C3.1}, and define 
\begin{equation} \label{LM2.1} 
B(\ph_1 , \psi_1) := \int_0^T \langle \chi_\om \ph, \psi \rangle\,dt,
\qquad 
\Lambda(\psi_1) := \langle g_3 , \psi(T) \rangle
- \langle g_2 , \psi(0) \rangle
- \int_0^T \langle g_1, \psi \rangle\,dt.
\end{equation}
The bilinear map $B : L^2_x\times L^2_x \to \R$ is well defined 
and continuous because $|\chi_\om(x)| \leq 1$ and, 
by Lemma \ref{lemma:App7} and Remark \ref{rem:Lk}, 
$\| \ph \|_{T,0} \leq C \| \ph_1 \|_0$, and similarly for $\psi$.  
Moreover $B$ is coercive by Lemma \ref{lemma:Obs6} and Remark \ref{come fosse Ham}. 
The linear functional $\Lambda$ is bounded, with 
\[
| \Lambda(\psi_1) | \leq C \| g \|_{T,0} \| \psi_1 \|_0
\quad \forall \psi_1 \in L^2_x, \qquad 
\| g \|_{T,0} := \| g_1 \|_{T,0} + \| g_2 \|_0 + \| g_3 \|_0.
\]
Thus, by Riesz representation theorem (or Lax-Milgram),
there exists a unique $\ph_1\in L^2_x$ such that 
\begin{equation} \label{LM3}
B(\ph_1, \psi_1) = \Lambda(\psi_1) \quad \forall \psi_1 \in L^2_x.
\end{equation}
Moreover $\| \ph_1 \|_0 \leq C \|\Lambda\|_{\mL(L^2_x,\R)} \leq C' \| g \|_{T,0}$. 
Since $\| \ph \|_{T,0} \leq C \| \ph_1 \|_0$, we get \eqref{C2}. 
\end{proof}

\begin{proof}[Proof of Lemma \ref{lemma:Con1}]
$(i)$. 
Let $\ph_1 \in L^2_x$ be the unique solution of \eqref{LM3} given by Lemma \ref{lemma:LM}. 
Consider any $\psi_1 \in L^2_x$, 
and let $\ph, \psi \in C([0,T],L^2_x)$ 
be the unique solutions of the Cauchy problems \eqref{C3.1}. 
Recalling \eqref{LM1}, \eqref{C1} and integrating by parts, we have
\begin{align*}
0 & =  
\int_0^T \langle g_1 + \chi_\om \ph, \psi \rangle \, dt
+ \langle g_2, \psi(0) \rangle
- \langle g_3, \psi(T) \rangle 
\\
& = \int_0^T \langle \ttL_0 h, \psi \rangle \, dt
+ \langle g_2, \psi(0) \rangle
- \langle g_3, \psi(T) \rangle
\\
& = \langle h(T), \psi(T) \rangle
- \langle h(0), \psi(0) \rangle
+ \int_0^T \langle h, \ttL_0^* \psi \rangle \, dt
+ \langle g_2, \psi(0) \rangle
- \langle g_3, \psi(T) \rangle
\\
& = \langle h(T), \psi(T) \rangle
- \langle g_3, \psi(T) \rangle
\\
& = \langle h(T) - g_3, \psi_1 \rangle,
\end{align*}
from which it follows that $h(T) = g_3$.

$(ii)$. Assume that $\tilde \ph \in C([0,T],L^2_x)$ satisfies $\ttL_0^* \tilde \ph = 0$ 
and it has the property that the solution $h$ of the Cauchy problem \eqref{C1} 
satisfies $h(T) = g_3$. 
Let $\tilde \ph_1 := \tilde \ph(T)$. 
The same integration by parts as above shows that 
$B(\tilde \ph_1, \psi_1) = \Lambda(\psi_1)$ for all $\psi_1 \in L^2_x$. 
By the uniqueness in Lemma \ref{lemma:LM}, $\tilde \ph_1 = \ph_1$. 
\end{proof}

\begin{lemma}[Higher regularity] \label{lemma:Con2}
Let $T,\om,a_3,a_2,a_1,a_0,\mL_0,g_1,g_2,g_3$ be as in Lemma \ref{lemma:Con1}. 
There exist two positive constants $\d_*,\s$ with the following property.
Let $s>0$ be given. 
Assume that $a_0, a_1, a_2, a_3 \in C^2 ([0,T], H^{s+\s}_x)$. 
Let
\[
\d(\mu) := \sum_{k=0,1,2, \ i = 0,1,2,3} \| \pa_t^k a_i \|_{T,\mu+\s}, \quad \mu \in [0,s].
\]
Let $\| g \|_{T,s}:=\| g_1 \|_{T,s} + \| g_2 \|_s + \| g_3 \|_s < \infty$.
If $\d(0) \leq \d_*$, then the control $\ph$ constructed in Lemma \ref{lemma:Con1} 
and the solution $h$ of \eqref{C1} satisfy
\begin{equation} \label{C3 tame}
\| \ph, h \|_{T,s} 
\leq C_s (\| g \|_{T,s} + \d(s) \| g \|_{T,0})
\end{equation}
for some positive $C_s$ depending on $s,T,\om$.
Moreover, if $g_1 \in C^1([0,T],H^s_x)$, then 
\begin{equation} \label{C4}
\| \pa_t \ph, \pa_t h \|_{T,s+3} + \| \pa_{tt} \ph, \pa_{tt} h \|_{T,s}
\leq C_s \{ \| g \|_{T,s+6} + \| \pa_t g_1 \|_{T,s} 
+ \d(s) \| g \|_{T,6} \}.
\end{equation}
\end{lemma}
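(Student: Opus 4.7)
The starting point is the pair $(\ph,h)$ from Lemma \ref{lemma:Con1}, which lies in $C([0,T], L^2_x)\times C([0,T],L^2_x)$, with $\ph(T)=:\ph_1$ satisfying $\ttL_0^*\ph=0$ and $h(T)=g_3$. The plan is to first upgrade the regularity of the terminal datum $\ph_1$ from $L^2_x$ to $H^s_x$, then propagate this regularity to $\ph$ and $h$ on the whole interval $[0,T]$ via the well-posedness estimates of Section \ref{sec:WP}, and finally to recover the time derivatives by using the equations themselves.

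For the first step I would run HUM at the $H^s_x$ level. Recall that $\ph_1$ is characterized by $B(\ph_1,\psi_1)=\Lambda(\psi_1)$ for all $\psi_1\in L^2_x$, where $B,\Lambda$ are defined in \eqref{LM2.1}. Using the tame well-posedness of the adjoint Cauchy problem (Lemma \ref{lemma:App7} combined with Remark \ref{come fosse Ham}), the map $\psi_1\mapsto\psi$ is bounded from $H^{-s}_x$ into $C([0,T],H^{-s}_x)$ with tame dependence on $\d(s)$. By duality, $\Lambda$ extends to a continuous functional on $H^{-s}_x$ with norm bounded by $\|g\|_{T,s}+\d(s)\|g\|_{T,0}$; and $B$ extends to a continuous bilinear form on $H^{-s}_x\times H^{-s}_x$. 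The coercivity of $B$ at level $H^{-s}_x$ is obtained by combining the $L^2_x$-observability of Lemma \ref{lemma:Obs6} with the conjugation machinery of Section \ref{sec:regu}: the explicit formulas for $\mA,\mB,\mM,\mT,\mS$ and their tame bounds (Lemmas \ref{lemma:mA}--\ref{lemma:mS}) allow one to deduce observability at regularity $H^{-s}_x$, up to lower-order corrections controlled by $\d(s)$. A Lax--Milgram argument in the dual pair $(H^{-s}_x,H^s_x)$ then produces a solution $\tilde\ph_1\in H^s_x$ of the same variational identity. By the uniqueness clause of Lemma \ref{lemma:Con1}(ii), $\tilde\ph_1=\ph_1$, so $\ph_1\in H^s_x$ with $\|\ph_1\|_s\leq C_s(\|g\|_{T,s}+\d(s)\|g\|_{T,0})$.

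Once $\ph_1$ is known to be in $H^s_x$, the second step is routine: apply Lemma \ref{lemma:App7} to $\ttL_0^*$ (with zero forcing and terminal datum $\ph_1$) to get the tame bound $\|\ph\|_{T,s}\leq C_s(\|\ph_1\|_s+\d(s)\|\ph_1\|_0)$; then apply Lemma \ref{lemma:App7} to the forward Cauchy problem $\ttL_0 h=g_1+\chi_\om\ph$, $h(0)=g_2$, whose data now sit in $H^s_x$, to obtain $h\in C([0,T],H^s_x)$ with the bound \eqref{C3 tame}. For the time-derivative estimate \eqref{C4}, I would solve the PDE \eqref{C1} algebraically for $\pa_t h=-(1+a_3)\pa_{xxx}h-a_2\pa_{xx}h-a_1\pa_x h-a_0 h+g_1+\chi_\om\ph$, which directly trades one time derivative for three space derivatives and gives the claimed loss; the analogous identity for $\pa_t\ph$ comes from $\ttL_0^*\ph=0$. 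The second time derivative is obtained by differentiating again and using that $\pa_t a_i\in C([0,T],H^{s+\s}_x)$ appears linearly, giving the six-space-derivative loss of the second component of \eqref{C4}. Tame products (Lemma \ref{lemma:tame basic} in Section \ref{sec:tame}) yield the right-hand sides with the desired structure.

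The main obstacle is the first step: upgrading $\ph_1\in L^2_x$ to $\ph_1\in H^s_x$ with a genuinely tame estimate, i.e.\ linear in both $\|g\|_{T,s}$ and $\d(s)\|g\|_{T,0}$. The delicate point is extending the $L^2_x$-observability (which is the coercivity of $B$) to the scale of Sobolev spaces while keeping track of the tame dependence on the high-Sobolev norm of the coefficients $a_i$; the conjugation scheme of Section \ref{sec:regu} is exactly tailored to produce such tame observability estimates, since each transformation $\mA,\mB,\mM,\mT,\mS$ enjoys a tame bound of the form \eqref{m.2}, \eqref{m.9}, \eqref{M9}, \eqref{m.21}, \eqref{m.16}. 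Tracking these bounds through the duality argument is what yields the quantitative tame estimate required later to feed the Nash--Moser scheme of Section \ref{sec:NM}.
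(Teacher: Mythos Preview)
Your overall architecture (upgrade $\ph_1$ to $H^s_x$, then propagate via well-posedness, then read off time derivatives from the equations) is the same as the paper's, and your Steps~2 and~3 are correct and match the paper. The difficulty is entirely in Step~1, and there your plan has a genuine gap.

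The problem is the sentence ``$B$ extends to a continuous bilinear form on $H^{-s}_x\times H^{-s}_x$\ldots\ a Lax--Milgram argument in the dual pair $(H^{-s}_x,H^s_x)$ then produces $\tilde\ph_1\in H^s_x$.'' The bilinear form $B(\ph_1,\psi_1)=\int_0^T\langle\chi_\om\ph,\psi\rangle\,dt$ uses the $L^2_x$ pairing, and for $\psi_1\in H^{-s}_x$ the solution $\psi$ only lies in $C([0,T],H^{-s}_x)$; the integrand $\langle\chi_\om\ph,\psi\rangle$ then has no meaning unless $\chi_\om\ph\in C([0,T],H^{s}_x)$, which is exactly what you are trying to prove. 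So $B$ does \emph{not} extend to $H^{-s}_x\times H^{-s}_x$, and standard Lax--Milgram there is unavailable. If instead you mean an inf--sup (Babu\v{s}ka) condition on $H^s_x\times H^{-s}_x$, verifying that condition is essentially equivalent to the commutator estimate you are trying to avoid. Either way, the step as written does not go through, and in particular the crucial \emph{tame} structure of \eqref{C3 tame} (linear in $\d(s)$) is not visible from a bare duality argument.

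The paper proceeds differently at this point. It first conjugates the whole control problem \eqref{lug.2} through $\mA\mB\mM\mT\mS$ to an equivalent problem \eqref{lug.10}--\eqref{lug.11} for $\ttL_5=\pa_t+m\pa_{xxx}+\mR$, and then runs a Dehman--Lebeau commutator bootstrap: with $\Lambda^s=\langle D_x\rangle^s$, one compares $(\Lambda^s\tilde\ph,\Lambda^s\tilde h)$ with the pair $(\bar\ph,\bar h)$ solving the same system with terminal datum $\Lambda^s\tilde\ph_1$, and controls the difference by the commutators $[\mR^T,\Lambda^s]$ and $[K,\Lambda^s]$. The whole point of the conjugation is that $\ttL_5$ has constant coefficients up to the \emph{bounded} remainder $\mR$, so $[\ttL_5,\Lambda^s]=[\mR,\Lambda^s]$ is of order $s-1$; this yields the inductive inequality
\[
\|\tilde\ph_1\|_s\leq C_s\big(\|\tilde g_2\|_s+\|\tilde\ph_1\|_{s-1}+\d(s)\|\tilde\ph_1\|_0\big),
\]
which iterates down to $\|\tilde\ph_1\|_0\leq C\|\tilde g_2\|_0$ and gives \eqref{C3 tame} with the correct tame dependence. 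Note (see the Remark after the proof) that trying the same commutator argument directly on $\ttL_0$ fails because $[\ttL_0,\Lambda^s]$ is of order $s+2$; the reduction of Section~\ref{sec:regu} is therefore not optional here but is the mechanism that makes the bootstrap close.
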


\begin{proof}
Let $g_1 \in C([0,T], H^s_x)$, $g_2, g_3 \in H^s_x$. 
Let $\ph, h \in C([0,T], L^2_x)$ be the solution of the control problem 
constructed in Lemma \ref{lemma:Con1}, namely
\begin{equation} \label{lug.2}
\ttL_0^* \ph = 0, \quad 
\ttL_0 h = \chi_\om \ph + g_1, \quad
h(0) = g_2, \quad 
h(T) = g_3.
\end{equation}
To prove that $h, \ph \in C([0,T], H^s_x)$, it is convenient to use the transformations 
of Section \ref{sec:regu}, to prove higher regularity for the solution $\tilde h, \tilde \ph$
of the transformed control problem, 
and then to go back to $h,\ph$ proving their higher regularity. 
Recall that 
\begin{equation} \label{lug.2.bis}
\ttL_0 = \mA \mB \rho \mM \mT \mS \ttL_5 \mS^{-1} \mT^{-1} \mM^{-1} \mB^{-1} \mA^{-1},
\end{equation} 
where $\ttL_5 = \pa_t + m \pa_{xxx} + \mR$ 
and $\mA, \mB, \rho, \mM, \mT, \mS$ are defined in Section \ref{sec:regu}.
In particular, 
\begin{itemize}
\item[$\cdot$]
$\mA$ is the change of the space variable 
$(\mA h)(t,x) = h(t, x + \b(t,x))$ (see \eqref{L4}), 
where $\b$ is defined in \eqref{L14}, \eqref{L12}, \eqref{L13};

\item[$\cdot$]
$\mB$ is the reparametrization of time 
$(\mB h)(t,x) = h(\psi(t),x)$ (see \eqref{L17}), 
where $\psi$ is defined in \eqref{L22};

\item[$\cdot$]
$\rho(t)$ is the function defined in \eqref{L18};

\item[$\cdot$]
$\mM$ is the multiplication operator $(\mM h)(t,x) = q(t,x) h(t,x)$ (see \eqref{M1}),
where $q$ is defined in \eqref{M5};

\item[$\cdot$]
$\mT$ is the translation of the space variable 
$(\mT h)(t,x) = h(t, x + p(t))$ (see \eqref{L25}), 
where $p$ is defined in \eqref{L28};

\item[$\cdot$]
$\mS$ is the pseudo-differential operator 
$(\mS h)(t,x) = h(t,x) + \g(t,x) \pa_x^{-1} h(t,x)$ (see \eqref{L30}), 
where $\g$ is defined in \eqref{L33} 
and $\pa_x^{-1} h$ is the primitive of $h$ with zero average in $x$ (defined in Fourier);

\item[$\cdot$]
$\mR$ is the bounded operator defined in \eqref{L34}.
\end{itemize}
Let 
\begin{equation} \label{lug.3}
\ttL_5^* := - \pa_t - m \pa_{xxx} + \mR^T,
\end{equation}
where $\mR^T$ is the $L^2_x$-adjoint of $\mR$. 
Let
\begin{alignat}{2}
\tilde h & := (\mA \mB \mM \mT \mS)^{-1} h, \quad &
\tilde g_1 & := (\mA \mB \rho \mM \mT \mS)^{-1} g_1, 
\notag \\ 
\tilde g_2 & := (\mA \mB \mM \mT \mS)^{-1}|_{t=0} \, g_2, \quad &
\tilde g_3 & := (\mA \mB \mM \mT \mS)^{-1}|_{t=T} \, g_3, \quad 
\label{lug.5} \\
\tilde \ph & := \mS^T \mT^T \mM^T \mB^{-1} \mA^T \ph, \qquad & 
K \tilde \ph & := (\mA \mB \rho \mM \mT \mS)^{-1} (\chi_\om (\mS^T \mT^T \mM^T \mB^{-1} \mA^T)^{-1} \tilde \ph). 
\notag 
\end{alignat}
Note that, except for $\mS^{-1}, \mS^{-T}$, the operator $K$ is a multiplication operator, namely 
\begin{equation} \label{lug.6}
K \tilde \ph = \mS^{-1} ( \zeta \mS^{-T} \tilde \ph), 
\quad \text{where} \quad 
\zeta(t,x) := \rho\inv \mT\inv \mM^{-2} \mB\inv \mA\inv [(1+ \b_x) \chi_\om].
\end{equation}
Since $h,\ph \in C([0,T], L^2_x)$, and $g_1 \in C([0,T],H^s_x)$, $g_2, g_3 \in H^s_x$, 
by \eqref{lug.5} and the estimates for $\mA, \mB, \rho, \mM, \mT, \mS$ in Section \ref{sec:regu},
one has
\[
\tilde h, \tilde \ph, K \tilde \ph \in C([0,T], L^2_x), \quad 
\tilde g_1 \in C([0,T],H^s_x), \quad 
\tilde g_2, \tilde g_3 \in H^s_x.
\]
Since $h,\ph$ satisfy \eqref{lug.2}, one proves that $\tilde h, \tilde \ph$ satisfy 
\begin{equation} \label{lug.10}
\ttL_5^* \tilde \ph = 0, \quad 
\ttL_5 \tilde h = K \tilde \ph + \tilde g_1, \quad
\tilde h(0) = \tilde g_2, \quad 
\tilde h(T) = \tilde g_3.
\end{equation}
The last three equations in \eqref{lug.10} are straightforward. 
To prove that $\ttL_5^* \tilde \ph = 0$,  
we start from the equality 
\[
\la \ph(T), v(T) \ra - \la \ph(0) , v(0) \ra 
= \int_0^T \la \ph, \ttL_0 v \ra dt \quad \forall v \in C^\infty([0,T] \times \T)
\]
(which is a weak form of $\ttL_0^* \ph = 0$), we recall \eqref{lug.2.bis}, 
and apply all the changes of variables $\mA, \mB, \mM, \mT, \mS$ in the integral.
Thus $\tilde h, \tilde \ph$ solve this control problem: 
\begin{equation} \label{lug.11}
\begin{cases}
\text{Given $\tilde g_1, \tilde g_2, \tilde g_3$, 
find $\tilde \ph$ such that the solution $\tilde h$} \\
\text{of the Cauchy problem $\ttL_5 \tilde h = K \tilde \ph + \tilde g_1$, $\tilde h(0) = \tilde g_2$} \\
\text{satisfies $\tilde h(T) = \tilde g_3$, and moreover $\tilde \ph$ solves $\ttL_5^* \tilde \ph = 0$.}
\end{cases}
\end{equation}
The function $\tilde \ph$ is the unique solution of \eqref{lug.11}. 
To prove it, assume that $\tilde \ph_{bis} \in C([0,T], L^2_x)$ solves \eqref{lug.11}, 
and let $\tilde h_{bis}$ be the solution of the corresponding Cauchy problem 
$\ttL_5 \tilde h_{bis} = K \tilde \ph_{bis} + \tilde g_1$, $\tilde h_{bis}(0) = \tilde g_2$.
Define 
\[
h_{bis} := \mA \mB \mM \mT \mS \tilde h_{bis}, \quad 
\ph_{bis} := \mA^{-T} \mB \mM^{-T} \mT^{-T} \mS^{-T} \tilde \ph_{bis}.
\]
Then $h_{bis}, \ph_{bis}$ solve \eqref{lug.2}. 
By the uniqueness in Lemma \ref{lemma:Con1}(ii) it follows that $\ph_{bis} = \ph$, $h_{bis} = h$. 
Therefore $\tilde \ph_{bis} = \tilde \ph$ and $\tilde h_{bis} = \tilde h$. 

Now we prove that $\tilde h, \tilde \ph \in C([0,T], H^s_x)$. 
We follow an argument used by Dehman-Lebeau \cite[Lemma 4.2]{Dehman-Lebeau}, Laurent \cite[Lemma 3.1]{Laurent}, and \cite[Proposition 8.1]{ABH}.
First, we prove the thesis for $\tilde g_1=0$, $\tilde g_3=0$. 
Consider the map 
\begin{equation}\label{hr2}
S \colon L^2_x \to L^2_x, \quad 
S \tilde \ph_1 = \tilde h(0)
\end{equation} 
obtained by the composition 
$\tilde \ph_1 \mapsto \tilde \ph \mapsto \tilde h \mapsto \tilde h(0)$, 
where $\tilde \ph, \tilde h$ are the solutions of the Cauchy problems
\begin{equation}  \label{hr3}
\begin{cases}
\ttL_5^* \tilde \ph = 0 \\ 
\tilde \ph(T) = \tilde \ph_1,
\end{cases}
\qquad 
\begin{cases}
\ttL_5 \tilde h = K \tilde \ph \\ 
\tilde h(T) = 0.
\end{cases}
\end{equation}
From the existence and uniqueness of $\tilde \ph_1 \in L^2_x$ such that $\tilde \ph$ solves \eqref{lug.11}
it follows that $S$ is an isomorphism of $L^2_x$. 
The initial datum $\tilde g_2$ is given, 
so we fix $\tilde \ph_1 \in L^2_x$ such that $S \tilde \ph_1 = \tilde g_2$.
We have to estimate $\| \Lm^s \tilde \ph_1 \|_0 
\leq C \| S \Lm^s \tilde \ph_1 \|_0$, 
where $\Lm^s$ is the Fourier multiplier of symbol $\la \xi \ra^{s} := (1 + \xi^2)^{s/2}$, 
$s > 0$. 
To study the commutator $[S,\Lm^s]$, 
we compare $(\Lm^s \tilde \ph, \Lm^s \tilde h)$ with $(\bar \ph, \bar h)$ defined by 
\begin{equation}  \label{hr5}
\begin{cases}
\ttL_5^* \bar \ph = 0 \\ 
\bar \ph(T) = \Lm^s \ph_1,
\end{cases}
\qquad 
\begin{cases}
\ttL_5 \bar h = K \bar \ph \\ 
\bar h(T) = 0.
\end{cases}
\end{equation}
The difference $\Lm^s \tilde \ph - \bar \ph$ satisfies
\begin{equation}  \label{hr6}
\begin{cases}
\ttL_5^* (\Lm^s \tilde \ph - \bar \ph) = \mF_1, 
\\
(\Lm^s \tilde \ph - \bar \ph)(T) = 0
\end{cases}
\quad \text{where} \quad 
\mF_1 := [\ttL_5^*, \Lm^s] \tilde \ph
= [\mR^T, \Lm^s] \tilde \ph.
\end{equation}
From Lemma \ref{lemma:App2} and Remark \ref{rem:Lk}, 
$\| \Lm^s \tilde \ph - \bar \ph \|_{T,0} \leq C \| \mF_1 \|_{T,0}$.
We recall the classical estimate for the commutator of $\Lm^s$ and any multiplication operator 
$h \mapsto ah$:
\begin{equation} \label{lug.50}
\| [\Lm^s, a] h \|_0 
\leq C_s (\| a \|_{2} \| h \|_{s-1} + \| a \|_{s+1} \| h \|_{0}).
\end{equation}
By \eqref{lug.50} and formulas \eqref{L30}, \eqref{L33}, \eqref{L34}, 
the commutator $\mF_1 = [\mR^T, \Lm^s] \tilde \ph$ satisfies 
\begin{align} 
\| \mF_1 \|_{T,0} 
& \leq C_s ( \| a_{14}, a_{17}, a_{18} \|_{T,\s} \| \tilde \ph \|_{T,s-1}
+ \| a_{14}, a_{17}, a_{18} \|_{T,s+\s} \| \tilde \ph \|_{T,0}) 
\notag \\ & 
\leq C_s ( \d(0) \| \tilde \ph \|_{T,s-1} 
+ \d(s) \| \tilde \ph \|_{T,0}).
\label{lug.12}
\end{align}
The difference $\Lm^s \tilde h - \bar h$ satisfies
\begin{equation}  \label{hr7}
\begin{cases}
\ttL_5 (\Lm^s \tilde h - \bar h) = K (\Lm^s \tilde \ph - \bar \ph) + \mF_2, 
\\
(\Lm^s \tilde h - \bar h)(T) = 0,
\end{cases}
\quad \text{where} \quad
\mF_2 := [\mR^T, \Lm^s] \tilde h + [\Lm^s, K] \tilde \ph.
\end{equation}
We have $\| K (\Lm^s \tilde \ph - \bar \ph) \|_{T,0} 
\leq C \| \Lm^s \tilde \ph - \bar \ph \|_{T,0} 
\leq C \| \mF_1 \|_{T,0}$, and therefore, by Lemma \ref{lemma:App2}, 
\begin{equation}\label{hr9}
\| \Lm^s \tilde h - \bar h \|_{T,0} 
\leq C (\| \mF_1 \|_{T,0} + \| \mF_2 \|_{T,0}).
\end{equation}
Using \eqref{lug.50} and \eqref{lug.6}, we get
\begin{equation} \label{lug.13}
\| \mF_2 \|_{T,0} 
\leq C_s ( \| \tilde h, \tilde \ph \|_{T,s-1} + \d(s) \| \tilde h, \tilde \ph \|_{T,0} ).
\end{equation}
By \eqref{lug.12}, \eqref{hr9} and \eqref{lug.13} we deduce that 
\[
\| \Lm^s \tilde h - \bar h \|_{T,0} 
\leq C_s ( \| \tilde h, \tilde \ph \|_{T,s-1} + \d(s) \| \tilde h, \tilde \ph \|_{T,0} ).
\]
By \eqref{hr3}, Lemma \ref{lemma:App2} and Remark \ref{rem:Lk},
\begin{equation} \label{hr9.1}
\| \tilde h, \tilde \ph \|_{T,\mu} 
\leq C_\mu \big( \| \tilde \ph \|_{T,\mu} + \d(\mu) \| \tilde \ph \|_{T,0} \big) 
\leq C_\mu \big( \| \tilde \ph_1 \|_{\mu} + \d(\mu) \| \tilde \ph_1 \|_0 \big) , \quad \mu \geq 0. 
\end{equation}
Therefore
\begin{equation} \label{hr10}
\| (\Lm^s \tilde h - \bar h) (0) \|_0 
\leq \| \Lm^s \tilde h - \bar h \|_{T,0} 
\leq C_s (\| \tilde \ph_1 \|_{s-1} + \d(s) \| \tilde \ph_1 \|_0).
\end{equation}
Since $S \tilde \ph_1 = \tilde h(0) = \tilde g_2$, 
we have $\Lm^s \tilde h(0) = \Lm^s g_2$. 
Moreover, by the definition of $S$ in \eqref{hr2}-\eqref{hr3}, 
$\bar h(0) = S \Lm^s \tilde \ph_1$. 
Thus 
\begin{equation}\label{hr11}
\| S \Lm^s \tilde \ph_1 \|_0 
\leq \| (\Lm^s \tilde h - \bar h)(0) \|_0 + \| \Lm^s \tilde h(0) \|_0
\leq C_s ( \| \tilde \ph_1 \|_{s-1} + \d(s) \| \tilde \ph_1 \|_0 )
+ \| \tilde g_2 \|_s.
\end{equation}
Since $S$ is an isomorphism of $L^2_x$, 
$\| \Lm^s \tilde \ph_1 \|_0 \leq C \| S \Lm^s \tilde \ph_1 \|_0$, whence 
\begin{equation}\label{lug.15}
\| \tilde \ph_1 \|_s
\leq C_s ( \| \tilde g_2 \|_s + \| \tilde \ph_1 \|_{s-1} + \d(s) \| \tilde \ph_1 \|_0 ).
\end{equation}
Since $\| \tilde \ph_1 \|_0 \leq C \| \tilde g_2 \|_0$, 
by induction we deduce that 
\begin{equation} \label{hr14}
\| \tilde \ph_1 \|_s 
\leq C_s (\| \tilde g_2 \|_s + \d(s) \| \tilde g_2 \|_0 ).
\end{equation}
By \eqref{hr9.1}, we obtain 
\begin{equation} \label{hr15}
\| \tilde h, \tilde \ph \|_{T,s}
\leq C_s (\| \tilde g_2 \|_s + \d(s) \| \tilde g_2 \|_0 ),
\end{equation}
which is the thesis in the case $\tilde g_1 = 0$, $\tilde g_3 = 0$.

Now we prove the higher regularity of $\tilde h, \tilde \ph$ removing 
the assumption $\tilde g_1 = 0$, $\tilde g_3 = 0$. 
Let $\tilde g_1 \in C([0,T], H^s_x)$, $\tilde g_2, \tilde g_3 \in H^s_x$, 
and let $\tilde h, \tilde \ph$ be the solution of \eqref{lug.11}.
Let $w$ be the solution of the problem 
\[
\ttL_5 w = \tilde g_1, \quad 
w(T) = \tilde g_3.
\]
By Lemma \ref{lemma:App2}, $w \in C([0,T], H^s_x)$, with 
\begin{equation}\label{hr17}
\| w \|_{T,s} \leq C_s \{ \| \tilde g_1 \|_{T,s} + \| \tilde g_3 \|_s 
+ \d(s) ( \| \tilde g_1 \|_{T,0} + \| \tilde g_3 \|_0) \}.
\end{equation}
Let $v := \tilde h - w$. Then 
\[
\ttL_5 v = K \tilde \ph, \quad 
v(0) = \tilde g_2 - w(0), \quad 
v(T) = 0.
\]
This means that $v, \tilde \ph$ solve \eqref{lug.11} where $(\tilde g_1, \tilde g_2, \tilde g_3)$ 
are replaced by $(0, \tilde g_2 - w(0), 0)$. 
Hence \eqref{hr15} applies to $v, \tilde \ph$, and we get
\begin{equation} \label{hr19}
\| v, \tilde \ph \|_{T,s}
\leq C_s (\| \tilde g_2 - w(0) \|_s + \d(s) \| \tilde g_2 - w(0) \|_0 ).
\end{equation}
We estimate $\| \tilde g_2 - w(0) \|_s \leq \| \tilde g_2 \|_s + \| w \|_{T,s}$, 
we use \eqref{hr17} and $\| \tilde h \|_{T,s} \leq \| v \|_{T,s} + \| w \|_{T,s}$ to conclude that 
\begin{equation} \label{hr20}
\| \tilde h , \tilde \ph \|_{T,s}
\leq C_s \{ \| \tilde g \|_{T,s} 
+ \d(s) \| \tilde g \|_{T,0} \}
\end{equation}
where we have denoted, in short, 
$\| \tilde g \|_{T,s} := \| \tilde g_1 \|_{T,s} + \| \tilde g_2 \|_s + \| \tilde g_3 \|_s$. 
This proves the higher regularity for the transformed control problem \eqref{lug.11}. 
By the definitions in \eqref{lug.5}, 
\begin{align*}
\| \ph \|_{T,s} 
& \leq C_s (\| \tilde \ph \|_{T,s} + \d(s) \| \tilde \ph \|_{T,0}),
\qquad 
\| h \|_{T,s} 
\leq C_s (\| \tilde h \|_{T,s} + \d(s) \| \tilde h \|_{T,0}),
\\
\| \tilde g \|_{T,s} 
& \leq C_s (\| g \|_{T,s} + \d(s) \| g \|_{T,0}),
\end{align*}
and the proof of \eqref{C3 tame} is complete. 

The bound \eqref{C4} is deduced in a classical way from the fact that $h, \ph$ 
solve the equations 
$\ttL_0^* \ph = 0$, 
$\ttL_0 h = \chi_\om \ph + g_1$.
\end{proof}

\begin{remark}
Another possible way to prove higher regularity for $h,\ph$ is to apply the argument of 
\cite{Dehman-Lebeau, Laurent, ABH} directly to the control problem for $\ttL_0$, 
instead of passing to the transformed problem \eqref{lug.11}, applying that argument, 
and then going back to $h,\ph$. 
Such a more direct method adapted to the present case would require the construction 
of two operators $A_s, B_s$ such that 

$(i)$ $C_1 \| v \|_s \leq \| A_s v \|_0 \leq C_2 \| v \|_s$ (equivalent norm in $H^s$), 

$(ii)$ the commutator $[\ttL_0, A_s]$ is an operator of order $s-1$, 

$(iii)$ the difference $B_s \ttL_0^* - \ttL_0^* A_s$ is also of order $s-1$. 

The construction of such $A_s, B_s$ is possible, 
but probably the proof given above is more straighforward, 
and it fully exploits the advantages of conjugating $\ttL_0$ to $\ttL_5$ (Section \ref{sec:regu}).
The main point is that the commutator $[\ttL_5, \Lm^s]$ is of order $s-1$ (because $\ttL_5$ has constant coefficients up to a \emph{bounded} remainder),
while $[\ttL_0, \Lm^s]$ is of order $s+2$ (because $\ttL_0$, 
which was obtained by linearizing a \emph{quasi-linear} PDE, 
has variable coefficients also at the highest order), 
so that a modified version $A_s$ of $\Lm^s$ is needed.
\end{remark}

In view of the application of Nash-Moser theorem in section \ref{sec:proof},
we define the spaces 
\begin{equation} \label{def Es}
E_s := X_s \times X_s, \quad 
X_s := C([0,T], H^{s+6}_x) \cap C^1([0,T], H^{s+3}_x) \cap C^2([0,T], H^s_x)
\end{equation}
and 
\begin{equation} \label{def Fs}
F_s := \{ g = (g_1, g_2, g_3) : g_1 \in C([0,T], H^{s+6}_x) \cap C^1([0,T], H^s_x), 
g_2, g_3 \in H^{s+6}_x \}
\end{equation}
equipped with the norms 
\begin{equation} \label{def norm Es}
\| u,f \|_{E_s} := \| u \|_{X_s} + \| f \|_{X_s}, \quad 
\| u \|_{X_s} := \| u \|_{T,s+6} + \| \pa_t u \|_{T,s+3} + \| \pa_{tt} u \|_{T,s}
\end{equation}
and
\begin{equation} \label{def norm Fs}
\| g \|_{F_s} := \| g_1 \|_{T,s+6} + \| \pa_t g_1 \|_{T,s} + \| g_2, g_3 \|_{s+6}.
\end{equation}
With this notation, we have proved the following linear inversion result.

\begin{theorem}[Right inverse of the linearized operator] \label{thm:inv}
Let $T>0$, and let $\om \subset \T$ be an open set.
There exist two universal constants $\t,\s \geq 3$ 
and a positive constant $\d_*$ depending on $T,\om$ with the following property.

Let $s \in [0, r-\t]$, 
where $r$ is the regularity of the nonlinearity $\mN$ (see Lemma \ref{lemma:stime ai}). 
Let $g = (g_1, g_2, g_3) \in F_s$, 
and let $(u,f) \in E_{s+\s}$, 
with $\| u \|_{X_\s} \leq \d_*$. 
Then there exists $(h,\ph) := \Psi(u,f)[g] \in E_s$ such that 
\begin{equation} \label{3009}
P'(u)[h] - \chi_\om \ph = g_1, \quad 
h(0) = g_2, \quad 
h(T) = g_3,
\end{equation}
and 
\begin{equation}  \label{2809}
\| h,\ph \|_{E_s} \leq C_s \big( \| g \|_{F_s} + \| u \|_{X_{s+\s}} \| g \|_{F_0} \big)
\end{equation}
where $C_s$ depends on $s,T,\om$.
\end{theorem}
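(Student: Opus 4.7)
The strategy is to assemble the pieces already in place: the HUM control construction (Lemma \ref{lemma:Con1}) yields an $L^2_x$-control, the bootstrap of Lemma \ref{lemma:Con2} promotes it to higher regularity with tame estimates on the coefficients, and Lemma \ref{lemma:stime ai} translates the coefficient norms into norms on $u$. What remains is to fix the losses $\s, \t$ so that the hypotheses and estimates close up on the spaces $E_s, F_s$ introduced in \eqref{def Es}-\eqref{def Fs}.

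First I would verify the structural prerequisite. The linearized operator $P'(u)$ has the form \eqref{L1} with coefficients $a_i(u)$ defined in \eqref{ai}; the splitting \eqref{i2.1}-\eqref{i6} shows that only $\mN_1$ contributes to $a_2, a_3$, and a short computation from the Hamiltonian form in \eqref{i6} yields $a_2 = 2\,\pa_x a_3$, so \eqref{a2a3} holds with $c=2$. Hence Lemmas \ref{lemma:Con1} and \ref{lemma:Con2} apply with $\ttL_0 = P'(u)$, provided the coefficients are small enough in the relevant norms.

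Next I would bound the coefficient quantity $\d(\mu) = \sum_{k\leq 2,\ i\leq 3} \|\pa_t^k a_i(u)\|_{T,\mu+\s_0}$ appearing in Lemma \ref{lemma:Con2}. By Lemma \ref{lemma:stime ai} one has $\d(\mu) \leq C\,\|u,\pa_t u,\pa_{tt} u\|_{T,\mu+\s_0+3}$, and the most demanding term is $\pa_{tt} a_i$, whose control requires $\pa_{tt} u$ at spatial regularity $\mu+\s_0+3$. Comparing with the norm \eqref{def norm Es} of $X_s$, this forces the choice $\s \geq \s_0 + 9$ so that $\|u\|_{X_{s+\s}}$ dominates $\d(s+6)$. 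The constant $\t$ is then determined by requiring $\mN \in C^r$ with $r$ large enough for Lemma \ref{lemma:stime ai} to yield estimates up to spatial index $s+\s_0+9$, i.e. $r - \t \geq s$ with $\t = \s_0 + 12$ (or some such). Having fixed $\s$, the smallness condition $\d(0)\leq \d_*$ in Lemmas \ref{lemma:Con1}-\ref{lemma:Con2} is guaranteed by $\|u\|_{X_\s} \leq \d_*$ with $\d_*$ small enough depending on $T,\om$.

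With these reductions, Lemma \ref{lemma:Con1} produces the control $\ph \in C([0,T],L^2_x)$ driving the solution $h$ of $P'(u)[h] = g_1 + \chi_\om \ph$, $h(0)=g_2$, to $h(T)=g_3$, with the $L^2$-bound \eqref{C2}. Now Lemma \ref{lemma:Con2} applied at level $s+6$ via \eqref{C3 tame} gives
\[
\|h,\ph\|_{T,s+6} \leq C_s\bigl(\|g_1\|_{T,s+6} + \|g_2,g_3\|_{s+6} + \d(s+6)\,\|g\|_{F_0}\bigr),
\]
while \eqref{C4} at level $s$ gives
\[
\|\pa_t h,\pa_t\ph\|_{T,s+3} + \|\pa_{tt}h,\pa_{tt}\ph\|_{T,s} \leq C_s\bigl(\|g_1\|_{T,s+6} + \|\pa_t g_1\|_{T,s} + \|g_2,g_3\|_{s+6} + \d(s+6)\,\|g\|_{F_0}\bigr).
\]
Summing and invoking $\d(s+6) \leq C\|u\|_{X_{s+\s}}$ produces exactly the tame bound \eqref{2809} with $(h,\ph)\in E_s$.

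The only real obstacle is the combinatorial bookkeeping: since $X_s$ is asymmetric (six spatial derivatives on $u$, only $s$ on $\pa_{tt}u$), one must verify that the worst-placed term, namely $\pa_{tt} a_i$ evaluated at spatial index $s+6+\s_0$, is still controlled by $\|u\|_{X_{s+\s}}$ for the chosen $\s$. Once this counting is carried out, the theorem follows immediately by quoting Lemmas \ref{lemma:stime ai}, \ref{lemma:Con1}, and \ref{lemma:Con2}, with $\Psi(u,f)[g] := (h,\ph)$ independent of $f$.
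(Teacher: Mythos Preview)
Your proposal is correct and follows exactly the approach the paper intends: the paper in fact gives no separate proof of Theorem \ref{thm:inv} but simply states ``With this notation, we have proved the following linear inversion result,'' treating it as a reformulation of Lemmas \ref{lemma:Con1}--\ref{lemma:Con2} combined with Lemma \ref{lemma:stime ai} in the scales $E_s,F_s$. Your write-up makes the index bookkeeping (the choice of $\s,\t$ so that $\d(s+6)\leq C\|u\|_{X_{s+\s}}$) explicit, which the paper leaves implicit.
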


\section{Proofs} 
\label{sec:proof}

In this section we prove Theorems \ref{thm:1} and \ref{thm:byproduct}.

\subsection{Proof of Theorem \ref{thm:1}} \label{subsec:proof thm 1}

The spaces defined in \eqref{def Es}-\eqref{def norm Fs}, with $s \geq 0$, 
form scales of Banach spaces. 
We define smoothing operators $S_\theta$ in the following way. 
We fix a $C^\infty$ function $\ph : \R \to \R$ with $0 \leq \ph \leq 1$, 
\[
\ph(\xi) = 1 \quad \forall |\xi| \leq 1 \qquad \text{and} \qquad 
\ph(\xi) = 0 \quad \forall |\xi| \geq 2.
\]
For any real number $\theta \geq 1$, let $S_\theta$ be the Fourier multiplier with symbol
$\ph(\xi / \theta)$, namely
\begin{equation}
S_\theta u (x) := \sum_{k \in \Z} \hat u_k \, \ph(k/\theta) \, e^{ikx}
\qquad \text{where} \quad 
u(x) = \sum_{k \in \Z} \hat u_k e^{ikx} \in L^2(\T).
\end{equation}
The definition of $S_\theta$ extends to functions $u(t,x) = \sum_{k \in \Z} \hat u_k(t) \, e^{ikx}$ depending on time in the obvious way. 
Since $S_\theta$ and $\pa_t$ commute, the smoothing operators $S_\theta$ are defined 
on the spaces $E_s$, $F_s$ defined in \eqref{def Es}-\eqref{def Fs} by setting 
$S_\theta(u,f) := (S_\theta u, S_\theta f)$ and similarly on $g = (g_1, g_2, g_3)$. 
One easily verifies that $S_\theta$ satisfies \eqref{S1}-\eqref{S4} on $E_s$ and $F_s$. 
We define the spaces $E_a'$ with norm $\| \ \|'_a$ and 
$F_b'$ with $\| \ \|_b'$ as constructed in section \ref{sec:NM}.

We observe that $\Phi(u,f) := (P(u) - \chi_\om f, \, u(0), \, u(T) )$ 
defined in \eqref{ge2}-\eqref{ge3} belongs to $F_s$ 
when $(u,f) \in E_{s+3}$, $s \in [0, r-6]$, 
with $\| u \|_{T,4} \leq 1$. 
Its second derivative is
\[
\Phi''(u,f)[(h_1, \ph_1), (h_2, \ph_2)]
= \begin{pmatrix} P''(u)[h_1, h_2] \\ 0 \\ 0 \end{pmatrix}.
\]
For $u$ in a fixed ball $\| u \|_{X_1} \leq \d_0$, with $\d_0$ small enough, 
we estimate
\begin{equation} \label{stima Phi''}
\| P''(u)[h,w] \|_{F_s} 
\leq C_s \big( \| h \|_{X_1} \| w \|_{X_{s+3}} 
+ \| h \|_{X_{s+3}} \| w \|_{X_1}
+ \| u \|_{X_{s+3}} \| h \|_{X_1} \| w \|_{X_1} \big)
\end{equation}
for all $s \in [0, r-6]$. 
We fix $V = \{ (u,f) \in E_3 : \| (u,f) \|_{E_3} \leq \d_0 \}$,
$\d_1 = \d_*$,  
\begin{equation} \label{param.1}
a_0 = 1, \quad
\mu = 3, \quad
a_1 = \s, \quad
\a = \b = 2 \s, \quad
a_2 \in ( 3\s , r - \t]
\end{equation}
where $\d_*, \s, \t$ are given by Theorem \ref{thm:inv}, 
and $r$ is the regularity of $\mN$ in Theorem \ref{thm:1}. 
The right inverse $\Psi$ in Theorem \ref{thm:inv} satisfies the assumptions of Theorem \ref{thm:NM}.
Thus by Theorem \ref{thm:NM} we obtain that, 
if $g = (0, u_{in}, u_{end}) \in F_\b'$ with $\| g \|_{F_\b}' \leq \d$, 
then there exists a solution $(u,f) \in E_\a'$ of the equation
$\Phi(u,f) = g$, with $\| u,f \|_{E_\a}' \leq C \| g \|_{F_\b}'$ 
(and recall that $\b = \a$). 
We fix $s_1 := \a + 6$, and \eqref{stimetta} is proved. 
In fact, we have proved slightly more than \eqref{stimetta}, 
because $\| g \|_{F_\b}' \leq C \| g \|_{F_\b}$ 
and $\| u,f \|_{E_a} \leq C_a \| u,f \|_{E_\a}'$ for all $a < \a$. 

We have found a solution $(u,f)$ of the control problem \eqref{i9}-\eqref{i10}. 
Now we prove that $u$ is the unique solution of the Cauchy problem \eqref{i9}, 
with that given $f$. 
Let $u,v$ be two solutions of \eqref{i9} in $E_{s-6}$ for all $s < s_1$. 
We calculate
\[
P(u) - P(v) 
= \int_0^1 P'(v + \lm (u-v))[u-v] \, d\lm
=: \widetilde \mL_0 [u-v]
\]
where 
\[
\widetilde \mL_0 := \pa_t + (1 + \tilde a_3(t,x)) \pa_{xxx} 
+ \tilde a_2(t,x) \pa_{xx} 
+ \tilde a_1(t,x) \pa_{x}
+ \tilde a_0(t,x),
\]
\[
\tilde a_i(t,x) := \int_0^1 a_i(v+\lm(u-v))(t,x) \, d\lm, \quad i = 0,1,2,3,
\]
and $a_i(u)$ is defined in \eqref{ai}.
Note that $\tilde a_2 = 2 \pa_x \tilde a_3$ because 
$a_2(v+\lm(u-v)) = 2 \pa_x a_3(v+\lm(u-v))$ for all $\lm \in [0,1]$. 
The difference $u-v$ satisfies 
$\widetilde \mL_0 (u-v) = 0$, $(u-v)(0) = 0$.
Hence, by Lemma \ref{lemma:App7}, $u-v=0$. 
The proof of Theorem \ref{thm:1} is complete.
\qed

\subsection{Proof of Theorem \ref{thm:byproduct}} \label{subsec:proof thm byproduct}

We define 
\begin{align} \label{def Es bis}
E_s & := C([0,T], H^{s+6}_x) \cap C^1([0,T], H^{s+3}_x) \cap C^2([0,T], H^s_x),
\\
F_s & := \{ g = (g_1, g_2) : g_1 \in C([0,T], H^{s+6}_x) \cap C^1([0,T], H^s_x), 
g_2 \in H^{s+6}_x \}
\end{align}
equipped with norms 
\begin{align} \label{def norm Es bis}
\| u \|_{E_s} & := \| u \|_{T,s+6} + \| \pa_t u \|_{T,s+3} + \| \pa_{tt} u \|_{T,s}
\\
\| g \|_{F_s} & := \| g_1 \|_{T,s+6} + \| \pa_t g_1 \|_{T,s} + \| g_2 \|_{s+6},
\end{align}
and $\Phi(u) := (P(u), u(0))$. 
Given $g := (f,u_{in}) \in F_{s_0}$, the Cauchy problem \eqref{i11} writes $\Phi(u) = g$.
We fix $V, \d_1, a_0, \mu, a_1, \a, \b, a_2$ like in \eqref{param.1},
where the constants $\s,\d_*$ are now given in Lemma \ref{lemma:App7} 
and $\t=\s + 9$ by Lemma \ref{lemma:stime ai} combined with Lemma \ref{lemma:App7} and the definition of the spaces $E_s,F_s$. 
Assumption \eqref{tame in NM} about the right inverse of the linearized operator 
is satisfied by Lemmas \ref{lemma:App7} and \ref{lemma:stime ai}. 
We fix $s_0 := \a + 6$.
Then Theorem \ref{thm:NM} applies, giving the existence part of Theorem \ref{thm:byproduct}. 
The uniqueness of the solution is proved exactly as in the proof of Theorem \ref{thm:1}.
\qed


\section{Appendix A. Well-posedness of linear operators} 
\label{sec:WP}

\begin{lemma} \label{lemma:App1}
Let $T > 0$, $m \in \R$, $s \in \R$, $f \in C([0,T],H^s_x)$, with 
$f(t,x) = \sum_{n \in \Z} f_n(t) e^{inx}$. 
Let $A$ be the linear operator defined by $A f := v$ where $v$ is the solution of 
\begin{equation} \label{App1}
\begin{cases}
\pa_t v + m \pa_{xxx} v = f \quad \forall (t,x) \in [0,T] \times \T, \\
v(0,x) = 0. \end{cases}
\end{equation}
Then 
\begin{equation} \label{App2}
A f(t,x) = \sum_{n \in \Z} (Af)_n(t) e^{inx}, \quad 
(Af)_n(t) = \int_0^t e^{i m n^3 (\t-t)} f_n(\t) \, d\t,
\end{equation}
$Af$ belongs to $C([0,T],H^s_x) \cap C^1([0,T],H^{s-3}_x)$, and 
\begin{equation} \label{App3}
\| A f \|_{T,s} \leq T \| f \|_{T,s} \,.
\end{equation}
\end{lemma}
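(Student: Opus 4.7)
The strategy is completely classical: pass to Fourier series in $x$ to reduce the PDE to a family of uncoupled scalar linear ODEs, solve each by Duhamel, and then sum the resulting bounds back up with Minkowski's integral inequality.

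First, write $f(t,x) = \sum_n f_n(t) e^{inx}$ and look for $v(t,x) = \sum_n v_n(t) e^{inx}$. Since $\partial_{xxx} e^{inx} = (in)^3 e^{inx} = -i n^3 e^{inx}$, projecting the equation $\partial_t v + m \partial_{xxx} v = f$ on the $n$-th Fourier mode gives the scalar ODE
\[
\dot v_n(t) - i m n^3 v_n(t) = f_n(t), \qquad v_n(0) = 0.
\]
Multiplying by the integrating factor $e^{-imn^3 t}$ and integrating from $0$ to $t$ yields the Duhamel formula for $(Af)_n(t) = v_n(t)$ displayed in \eqref{App2} (up to the obvious sign convention in the exponent). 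This defines $v$ as a formal Fourier series; the estimates below will show it converges in $C([0,T],H^s_x)$ and solves \eqref{App1}.

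Next, to obtain \eqref{App3}, use $|e^{imn^3(\tau-t)}|=1$ to bound $|v_n(t)| \le \int_0^t |f_n(\tau)|\, d\tau$, multiply by $\langle n\rangle^s$, take the $\ell^2_n$-norm, and apply Minkowski's integral inequality:
\[
\|v(t)\|_s
\;=\; \Big(\sum_n \langle n\rangle^{2s} |v_n(t)|^2\Big)^{1/2}
\;\le\; \int_0^t \Big(\sum_n \langle n\rangle^{2s} |f_n(\tau)|^2\Big)^{1/2} d\tau
\;=\; \int_0^t \|f(\tau)\|_s\, d\tau \;\le\; T\, \|f\|_{T,s}.
\]
Taking the sup over $t\in[0,T]$ gives \eqref{App3}.

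Finally, for the regularity statement: continuity of $t\mapsto v(t)\in H^s_x$ follows from the explicit Duhamel formula together with dominated convergence, using that $f\in C([0,T],H^s_x)$ and that the Fourier phases $e^{imn^3(\tau-t)}$ are uniformly bounded. For the $C^1([0,T],H^{s-3}_x)$ part, it is enough to read the equation $\partial_t v = f - m \partial_{xxx} v$: the right-hand side is continuous in $t$ with values in $H^{s-3}_x$ because $v\in C([0,T],H^s_x)$ and $\partial_{xxx}\colon H^s_x\to H^{s-3}_x$ is bounded, while $f\in C([0,T],H^s_x)\subset C([0,T],H^{s-3}_x)$. There is no real obstacle here; the only point requiring a small amount of care is swapping $\sum_n$ with $\int_0^t$ in the norm estimate, which is exactly what Minkowski's integral inequality provides.
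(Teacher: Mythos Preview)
Your proof is correct and follows essentially the same strategy as the paper: derive the Duhamel formula \eqref{App2} mode by mode, then estimate. The only minor difference is in the estimate step: the paper applies Cauchy--Schwarz in $\tau$ to get $|(Af)_n(t)|^2 \le t\int_0^t |f_n(\tau)|^2\,d\tau$ and then uses Fubini to swap $\sum_n$ and $\int_0^t$, whereas you use Minkowski's integral inequality directly; both routes yield $\|v(t)\|_s \le t\,\|f\|_{T,s}$.
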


\begin{proof}
Formula \eqref{App2} simply comes from variation of constants. 
By H\"older's inequality,
\[
|(Af)_n(t)| \leq \sqrt{t} \Big( \int_0^t |f_n(\t)|^2 \, d\t \Big)^{\frac12} \quad \forall t \in [0,T]
\]
and therefore, for each $t \in [0,T]$, 
\begin{align*}
\| Af(t) \|_{H^s_x}^2 
& = \sum_{n \in \Z} |(Af)_n(t)|^2 \langle n \rangle^{2s} 
\leq \sum_{n \in \Z} t \int_0^t |f_n(\t)|^2 \, d\t  \langle n \rangle^{2s} 
\\ 
&  \leq t \int_0^t \sum_{n \in \Z} |f_n(\t)|^2 \langle n \rangle^{2s} \, d\t
= t \int_0^t \| f(\t) \|_{H^s_x}^2 \, d\t
\leq t^2 \| f \|_{C([0,t],H^s_x)}^2.
\end{align*}
Taking the sup over $t \in [0,T]$ we get the thesis.
\end{proof}

We remark that for $s \leq 3$ the operator $A$ is well-defined in the sense of distributions.
We also recall that $\mL(H^s_x)$ is the space of linear bounded operators of $H^s_x$ into itself, 
with operator norm $\| L \|_{\mL(H^s_x)} := \sup \{ \| Lh \|_s : h \in H^s_x, \| h \|_s = 1 \}$.  

\begin{lemma}  \label{lemma:App2}
$(i)$ \emph{(LWP).} 
Let $T  > 0$, $s \in \R$, $\mR \in C([0,T], \mL(H^s_x))$, and let
\begin{equation} \label{App3.1}
r_s := \| \mR \|_{C([0,T], \mL(H^s_x))} = \sup_{t \in [0,T]} \| \mR(t) \|_{\mL(H^s_x)},
\qquad  \mL_5:= \pa_t + m \pa_{xxx} + \mR\ .
\end{equation}
Let $\a \in H^s_x$ and $f \in C([0,T], H^s_x)$. 
If $T \, r_s \leq 1/2$, then the Cauchy problem
\begin{equation} \label{App4}
\begin{cases}
\mL_5 u = f \\ 
u(0,x) = \a(x)
\end{cases}
\end{equation}
has a unique solution $u \in C([0,T], H^s_x)$.
The solution $u$ satisfies 
\begin{equation} \label{App25}
\| u \|_{T,s} 
\leq (1 + 2 T r_s) \| \a \|_s + 2 T \| f \|_{T,s}
\leq 2 (\| \a \|_s + T \| f \|_{T,s}).
\end{equation}

\medskip

$(ii)$ \emph{(Tame LWP).} 
Let $T > 0$, $s \in \R$, $s_1 \in \R$ with $s \geq s_1$, 
and let $\mR \in C([0,T], \mL(H^s_x))$ $\cap$ $C([0,T], \mL(H^{s_1}_x))$. 
Assume that 
\begin{equation} \label{App4.1}
\| \mR(t) h \|_s \leq c_1 \| h \|_s + c_s \| h \|_{s_1} \,, \quad 
\| \mR(t) h \|_{s_1} \leq c_1 \| h \|_{s_1} \quad \forall h \in H^s_x \,,
\end{equation}
for all $t \in [0,T]$, where $c_1, c_s$ are positive constants.  
Let $\a \in H^s_x$. 
If 
\begin{equation}  \label{Tc1 small}
T c_1 \leq 1/2, 
\end{equation}
then the solution $u \in C([0,T],H^{s_1}_x)$ of the Cauchy problem \eqref{App4} given in $(i)$ 
belongs to $C([0,T], H^s_x)$, with
\begin{equation} \label{App4.2}
\| u \|_{T,s} \leq 2 T \| f \|_{T,s} 
+ (1 + 2Tc_1) \| \a \|_s 
+ 4 T c_s (T \| f \|_{T,s_1} + \| \a \|_{s_1}).
\end{equation}

\medskip

$(iii)$ \emph{(GWP).}
Let $T > 0$, $s \in \R$, $\mR \in C([0,T], \mL(H^s_x))$, and let $r_s$ be defined in \eqref{App3.1}. 
Let $\a \in H^s_x$. 
Then the Cauchy problem \eqref{App4} has a unique global solution $u \in C([0,T], H^s_x)$, with 
\begin{equation} \label{App19}
\| u \|_{T,s} \leq 2^{4 T r_s} (\| \a \|_s + 4 T \| f \|_{T,s}).
\end{equation}

$(iv)$ \emph{(Tame GWP).} 
Let $T > 0$, $s \in \R$, $s_1 \in \R$ with $s \geq s_1$, 
and let $\mR \in C([0,T], \mL(H^s_x))$ $\cap$ $C([0,T], \mL(H^{s_1}_x))$. 
Assume that \eqref{App4.1} holds for all $t \in [0,T]$, where $c_1, c_s$ are positive constants. 
Let $\a \in H^s_x$. 
Then the global solution $u \in C([0,T],H^s_x)$ of the Cauchy problem \eqref{App4} 
given in $(iii)$ satisfies 
\begin{equation} \label{App20}
\| u \|_{T,s} \leq 2^{4 T c_1} ( \| \a \|_s + 4 T c_s \| \a \|_{s_1}  
+ 2 T \| f \|_{T,s} + 4 T^2 c_s \| f \|_{T,{s_1}}).
\end{equation}
\end{lemma}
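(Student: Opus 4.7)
The plan is to base everything on a single fixed-point formulation via Duhamel. Let $W(t)$ denote the free Airy propagator, i.e.\ the Fourier multiplier with symbol $e^{-imn^3 t}$; since $|e^{-imn^3 t}|=1$, $W(t)$ is an isometry of every $H^s_x$. With $A$ the integral operator of Lemma \ref{lemma:App1}, the Cauchy problem \eqref{App4} is equivalent to
\begin{equation*}
u = W(\cdot)\a + A\bigl(f - \mR u\bigr) =: \Psi(u),
\end{equation*}
which will be analyzed in $C([0,T],H^s_x)$.

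For part $(i)$, Lemma \ref{lemma:App1} yields $\| A(\mR u - \mR v)\|_{T,s} \leq T\,\|\mR(u-v)\|_{T,s} \leq Tr_s\,\|u-v\|_{T,s}$, so under the hypothesis $Tr_s\leq 1/2$ the map $\Psi$ is a $1/2$-contraction on $C([0,T],H^s_x)$; Banach's fixed-point theorem produces a unique solution $u$. Taking the $(T,s)$-norm of the fixed-point identity and absorbing the $Tr_s\,\|u\|_{T,s}$ on the left gives $\|u\|_{T,s}\leq 2(\|\a\|_s+T\|f\|_{T,s})$, which is the second inequality in \eqref{App25}; substituting this bound once more into the right-hand side of the identity produces the sharper form $(1+2Tr_s)\|\a\|_s+2T\|f\|_{T,s}$.

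For part $(ii)$ I first apply $(i)$ at the low regularity $s_1$: the smallness condition \eqref{Tc1 small} together with $r_{s_1}\leq c_1$ legitimizes this and gives $\|u\|_{T,s_1}\leq 2(\|\a\|_{s_1}+T\|f\|_{T,s_1})$. At the high regularity $s$ I take the $(T,s)$-norm in the same fixed-point identity and use the tame bound \eqref{App4.1} to get
\begin{equation*}
\|u\|_{T,s}\leq \|\a\|_s+T\|f\|_{T,s}+Tc_1\|u\|_{T,s}+Tc_s\|u\|_{T,s_1};
\end{equation*}
absorbing the $Tc_1\|u\|_{T,s}$ term (via $(1-Tc_1)^{-1}\leq 1+2Tc_1$ for $Tc_1\leq 1/2$) and inserting the previously obtained $H^{s_1}$-bound delivers \eqref{App4.2}.

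For parts $(iii)$ and $(iv)$ I partition $[0,T]$ into $N$ subintervals of common length $h=T/N$, choosing $N$ as the smallest integer for which $h\,r_s\leq 1/2$ (respectively $h\,c_1\leq 1/2$); thus $N\leq 2Tr_s+1$ (respectively $N\leq 2Tc_1+1$). On each subinterval $[t_k,t_{k+1}]$ the previously proved local theory applies and the pieces are concatenated to yield a global solution. Writing $M_k:=\|u(t_k)\|_s$, the local estimate from $(i)$ gives the linear recursion $M_{k+1}\leq 2M_k+2h\|f\|_{T,s}$ and $\sup_{[t_k,t_{k+1}]}\|u\|_s\leq 2M_k+2h\|f\|_{T,s}$, which by induction yields $M_k\leq 2^k(M_0+2h\|f\|_{T,s})$ and hence $\|u\|_{T,s}\leq 2^{N+1}(\|\a\|_s+2h\|f\|_{T,s})$; using $N\leq 2Tr_s+1$ and $h\leq T$ produces \eqref{App19}. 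For $(iv)$ I first control the $H^{s_1}$-norm by applying $(iii)$ at regularity $s_1$ (with $c_1$ in place of $r_s$), then iterate the tame step from $(ii)$ along the partition, which gives the analogous recursion with an additional $H^{s_1}$ source term; the exponential growth $2^{N}$ combined with $N\le 2Tc_1+1$ yields the factor $2^{4Tc_1}$ in \eqref{App20}.

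The one mildly delicate point is the bookkeeping of the explicit constants: matching $2^{4Tr_s}$ in $(iii)$ and $2^{4Tc_1}$ in $(iv)$ requires tracking both the number of subintervals and the $2^k$ growth in the linear recursion. Beyond this, the argument uses only the isometry property of $W(t)$, the basic bound $\|Af\|_{T,s}\leq T\|f\|_{T,s}$ from Lemma \ref{lemma:App1}, and the contraction principle.
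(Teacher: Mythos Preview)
Your overall strategy—Duhamel formulation, contraction mapping, and time partitioning—matches the paper's. Parts (i) and (iii) are essentially the paper's argument with only cosmetic differences in how the constants are tracked.

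Part (ii), however, has a genuine gap. You take the $(T,s)$-norm of the fixed-point identity and absorb the term $Tc_1\|u\|_{T,s}$; this is an \emph{a priori} estimate, valid only once you already know $\|u\|_{T,s}<\infty$. But the solution $u$ was produced by the contraction in $C([0,T],H^{s_1}_x)$, and under the sole hypothesis $Tc_1\le 1/2$ the map $\Psi$ need not be a contraction on $C([0,T],H^s_x)$: its Lipschitz constant there is bounded only by $T(c_1+c_s)$, not by $Tc_1$. So the membership $u\in C([0,T],H^s_x)$ is not yet established, and your inequality is circular as written.

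The paper fills this gap by going back to the Picard iterates. With $w_0=0$ and $w_{n+1}=\Psi(w_n)$, each $w_n$ does lie in $C([0,T],H^s_x)$ (since $\alpha\in H^s_x$, $f\in C([0,T],H^s_x)$, and $\mR$ maps $H^s_x$ to itself). The telescoping differences $h_n:=w_n-w_{n-1}$ then satisfy the two-level recursion
\[
\|h_{n+1}\|_{T,s}\le Tc_1\|h_n\|_{T,s}+Tc_s\|h_n\|_{T,s_1},\qquad
\|h_{n+1}\|_{T,s_1}\le Tc_1\|h_n\|_{T,s_1},
\]
which yields $\|h_n\|_{T,s}\le (Tc_1)^{n-1}\|h_1\|_{T,s}+(n-1)(Tc_1)^{n-2}Tc_s\|h_1\|_{T,s_1}$ and hence $\sum_n\|h_n\|_{T,s}<\infty$. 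This shows $w_n\to u$ in $C([0,T],H^s_x)$ and delivers \eqref{App4.2}. Your a priori inequality is correct and gives the same constants, but some device of this kind is needed first to justify finiteness of $\|u\|_{T,s}$. The same issue propagates to part (iv), which invokes (ii) on each subinterval.
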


\begin{proof}
$(i)$ Write $u = v + w$, where $v(t,x)$ is the solution of 
\begin{equation}  \label{App5}
\pa_t v + m \pa_{xxx} v = 0,  \quad v(0,x) = \a(x). 
\end{equation}
Hence $u$ solves \eqref{App4} if and only if $w(t,x)$ solves 
\begin{equation}  \label{App6}
\pa_t w + m \pa_{xxx} w + \mR w = - \mR v + f, \quad 
w(0,x) = 0.
\end{equation}
By Lemma \ref{lemma:App1}, \eqref{App6} is the fixed point problem 
\begin{equation}  \label{App7}
w = \Psi(w), 
\end{equation}
where $\Psi(w) := A [f - \mR (v+w)]$.
Let $B_\rho := \{ w \in C([0,T], H^s_x) : \| u \|_{T,s} \leq \rho \}$, $\rho \geq 0$. 
Then 
\begin{equation}  \label{App8}
\| \Psi(w) \|_{T,s} \leq T \, (\| f \|_{T,s} + r_s \| \a \|_s + r_s \rho), \qquad 
\| \Psi(w_1) - \Psi(w_2) \|_{T,s} \leq T \, r_s \| w_1 - w_2 \|_{T,s} 
\end{equation}
for all $w, w_1, w_2 \in B_\rho$. 
By assumption, $T \, r_s \leq 1/2$. 
Therefore, for any $\rho \geq 2 T (\| f \|_{T,s} + r_s \| \a \|_s)$, 
$\Psi$ is a contraction in $B_\rho$.
In particular, we fix $\rho = \rho_0 := 2 T (\| f \|_{T,s} + r_s \| \a \|_s)$.
Hence there exists a fixed point $w \in B_{\rho_0}$ of $\Psi$, with 
$\| w \|_{T,s} \leq \rho_0 
\leq 2T \| f \|_{T,s} + \| \a \|_s$. 
As a consequence, there exists a solution $u \in C([0,T],H^s_x)$ of \eqref{App4} with 
$\| u \|_{T,s} \leq 2 (T \| f \|_{T,s} + \| \a \|_s)$. 
By the contraction lemma, the solution $u$ is unique in any ball $B_\rho$, $\rho \geq \rho_0$, 
and therefore it is unique in $C([0,T],H^s_x)$. 

\medskip

$(ii)$ By assumption, $T c_1 \leq 1/2$, and therefore, by $(i)$, there exists a unique solution 
$u \in C([0,T],H^{s_1}_x)$. It remains to prove that $u$ satisfies \eqref{App4.2}. 
By construction, $u = v + w$, where $v \in C([0,T],H^s_x)$ is the solution of \eqref{App5}, 
with $\| v(t) \|_s = \| \a \|_s$ for all $t \in [0,T]$, 
and $w \in C([0,T], H^{s_1}_x)$ solves \eqref{App7}. 
By the iterative scheme of the contraction lemma, $w$ is the limit in $C([0,T],H^{s_1}_x)$ of the sequence $(w_n)$, where $w_0 := 0$, and $w_{n+1} := \Psi(w_n)$ for all $n \in \N$. 
By \eqref{App4.1} and \eqref{App3}, $\Psi$ maps $C([0,T], H^s_x)$ into itself, 
therefore $w_n \in C([0,T], H^s_x)$ for all $n \geq 0$. 
Let $h_n := w_n - w_{n-1}$, $n \geq 1$, so that $w_n = \sum_{k=1}^n h_k$.
One has $h_{n+1} = - A \mR h_n$ for all $n \geq 1$, and 
\[
\| h_{n+1} \|_{T,s} \leq T c_1 \| h_n \|_{T,s} + T c_s \| h_n \|_{T,s_1}, \quad 
\| h_{n+1} \|_{T,s_1} \leq T c_1 \| h_n \|_{T,s_1}, \quad \forall n \geq 1.
\]
Hence, by induction, for all $n \geq 1$ we have
\begin{equation}  \label{App11}
\begin{aligned}
\| h_n \|_{T,s} & \leq (T c_1)^{n-1} \| h_1 \|_{T,s} 
+ (n-1) (T c_1)^{n-2} T c_s \| h_1 \|_{T,s_1}, \\
\| h_n \|_{T,s_1} & \leq (T c_1)^{n-1} \| h_1 \|_{T,s_1}.
\end{aligned}
\end{equation}
Also, $\| h_1 \|_{T,s} \leq T \| f \|_{T,s} + T c_1 \| \a \|_s + T c_s \| \a \|_{s_1}$ 
and $\| h_1 \|_{T,s_1} \leq T \| f \|_{T,s_1} + T c_1 \| \a \|_{s_1}$. 
Therefore
\begin{align}  \notag
\| h_n \|_{T,s} 
& \leq (T c_1)^{n-1} T \| f \|_{T,s} 
+ (T c_1)^n \| \a \|_s 
+ (n-1) (T c_1)^{n-2} T c_s T \| f \|_{T,s_1} 
\\ & \quad \notag 
+ n (T c_1)^{n-1} T c_s \| \a \|_{s_1}, 
\\
\| h_n \|_{T,s_1} 
& \leq (T c_1)^{n-1} T \| f \|_{T,s_1} 
+ (T c_1)^n \| \a \|_{s_1} \qquad \forall n \geq 1.
\label{App11.2}
\end{align}
Since $T c_1 \leq 1/2$, the sequence $w_n = \sum_{k=1}^n h_k$ converges in $C([0,T],H^s_x)$
to some limit $\tilde w \in C([0,T],H^s_x)$. Since $w_n$ converges to $w$ in $C([0,T],H^{s_1}_x)$, 
the two limits coincide, and $w \in C([0,T],H^s_x)$. 
Since $\| w \|_{T,s} \leq \sum_{k=1}^\infty \| h_k \|_{T,s}$, 
we get 
\begin{equation}  \label{App12}
\| w \|_{T,s} \leq 2 T (\| f \|_{T,s} + c_1 \| \a \|_s)
+ 4 T c_s (T \| f \|_{T,s_1} + \| \a \|_{s_1}). 
\end{equation}
Since $u = v+w$, we deduce \eqref{App4.2}. 

\medskip

$(iii)$. If $T r_s \leq 1/2$, the result is given by $(i)$. 
Let $T r_s > 1/2$, and fix $N \in \N$ such that $2 T r_s \leq N \leq 4 T r_s$. 
Let $T_0 := T/N$, so that $1/4 \leq T_0 r_s \leq 1/2$. 
Divide the interval $[0,T]$ in the union $I_1 \cup \ldots \cup I_N$, where 
$I_n := [(n-1)T_0, nT_0]$. 
Applying $(i)$ on the time interval $I_1 = [0,T_0]$ 
gives the solution $u_1 \in C(I_1, H^s_x)$, 
with $\| u_1 \|_{C(I_1,H^s_x)} \leq b \| \a \|_s + 2T_0 \| f \|_{T,s}$, where 
$b := 1 + 2 T_0 r_s$. 
Now consider the Cauchy problem on $I_2$ with initial datum $u(T_0) = u_1(T_0)$. 
Applying $(i)$ on $I_2$ gives the solution $u_2 \in C(I_2, H^s_x)$, with 
\[
\| u_2 \|_{C(I_2, H^s_x)} 
\leq b \| u_1(T_0) \|_s + 2 T_0 \| f \|_{T,s}
\leq b^2 \| \a \|_s + (1+b) 2 T_0 \| f \|_{T,s}.
\]
We iterate the procedure $N$ times. At the last step, we find the solution $u_N$ defined on $I_N$, with $\| u_N \|_{C(I_N, H^s_x)} \leq b^N \| \a \|_s + (b^N-1) \frac{1}{b-1} 
2 T_0 \| f \|_{T,s}$.
We define $u(t) := u_n(t)$ for $t \in I_n$, and the thesis follows, using that $b \leq 2$. 

\medskip

$(iv)$ If $T c_1 \leq 1/2$, the result is given by $(ii)$. 
Let $T c_1 > 1/2$, and fix $N \in \N$ such that $2 T c_1 \leq N \leq 4 T c_1$. 
Let $T_0 := T/N$, so that $1/4 \leq T_0 c_1 \leq 1/2$. 
Split $[0,T] = I_1 \cup \ldots \cup I_N$, where $I_n := [(n-1)T_0, nT_0]$. 
Perform the same procedure as above. Using \eqref{App4.2}, and $1 + 2 T_0 c_1 \leq 2$, 
by induction we get 
\begin{align*} 
\| u_n \|_{C(I_n, H^s_x)} 
& \leq 2^n \| \a \|_s 
+ (2^n -1) 2 T_0 \| f \|_{T,s} 
+ n 2^{n-1} 4 T_0 c_s \| \a \|_{s_1} 
\\ & \quad 
+ [2^n(n-1) + 1] 4 T_0 c_s T_0 \| f \|_{T,s_1} \,,
\\
\| u_n \|_{C(I_n, H^{s_1}_x)} 
& \leq 2^n \| \a \|_{s_1}
+ (2^n-1) 2 T_0 \| f \|_{T,s_1}.
\end{align*}
This implies \eqref{App20}, recalling that $T_0 c_1 \leq 1/2$ and also $N T_0 = T$, $N \geq 1$.
\end{proof}

\begin{lemma}\label{lemma:App3}
There exist universal positive constants $\s, \d_*$ with the following properties.
Let $s \geq 0$, let $m\geq 1/2$, and let $a_{14}(t,x), a_{15}(t,x)$ be two functions with 
$a_{14}, \pa_t a_{14}, a_{15} \in C([0,T],H^{s+\s}_x)$ and $\int_\T a_{14} (t,x) \, dx = 0$,
and let $\mL_4:=\pa_t + m \pa_{xxx} + a_{14} \pa_x + a_{15}$.
Let 
\[
\d(\mu) := \| a_{14}, \pa_t a_{14}, a_{15} \|_{T,\mu + \s} 
\quad \forall \mu \in [0,s].
\]
Assume $\d(0)\leq \d_*$.
Let $f\in C([0,T],H^s_x)$, $\a\in H^s_x$.
Then the Cauchy problem
\begin{equation} \label{AppL4}
\ttL_4 u = f, \quad 
u(0) = \a
\end{equation}
admits a unique solution $u \in C([0,T], H^s_x)$, with 
\begin{equation} \label{en.est.30}
\| u \|_{T,s} \leq C_s \left\{\| f \|_{T,s} + \| \a \|_s + \d (s) (\| f \|_{T,0} + \| \a \|_0 ) \right\}\ .
\end{equation}
\end{lemma}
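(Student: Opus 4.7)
The plan is to reduce the variable-coefficient problem $\ttL_4 u = f$ to the form already handled by Lemma \ref{lemma:App2}, via the pseudodifferential conjugation $\mS$ of Section \ref{step-5}. Since $a_{14}(t,\cdot)$ has zero spatial mean, Lemma \ref{lemma:mS} applies: choosing $\s$ at least as large as the constant provided there, and $\d_*$ at most the one provided there, the operator $\mS$ defined in \eqref{L30}, \eqref{L33} is invertible on $C([0,T],\mL(H^\mu_x))$ for all $\mu\in[0,s]$, and the conjugate $\ttL_5=\mS^{-1}\ttL_4\mS=\pa_t+m\pa_{xxx}+\mR$ is of the form treated by Lemma \ref{lemma:App2}, with $\mR$ a bounded operator satisfying the tame bound \eqref{m.15}.

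First I would set $v:=\mS^{-1}u$, $\tilde f:=\mS^{-1}f$, $\tilde\a:=\mS^{-1}(0)\a$. Since $\mL_4\mS=\mS\mL_5$, a function $u\in C([0,T],H^s_x)$ solves \eqref{AppL4} if and only if $v$ solves
\begin{equation*}
\ttL_5 v=\tilde f,\qquad v(0)=\tilde\a.
\end{equation*}
By Lemma \ref{lemma:mS} (applied with $\mu=0$ and $\mu=s$),
\begin{equation*}
\|\mR(t)h\|_0\leq C_0\,\d(0)\,\|h\|_0,\qquad
\|\mR(t)h\|_s\leq C_0\,\d(0)\,\|h\|_s+C_s\,\d(s)\,\|h\|_0,
\end{equation*}
so the hypotheses \eqref{App4.1} of Lemma \ref{lemma:App2}(iv) hold with $s_1=0$, $c_1=C_0\d(0)$, $c_s=C_s\d(s)$.

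Next I would invoke Lemma \ref{lemma:App2}(iv), which gives existence, uniqueness and the global tame estimate
\begin{equation*}
\|v\|_{T,s}\leq 2^{4Tc_1}\bigl(\|\tilde\a\|_s+4Tc_s\|\tilde\a\|_0+2T\|\tilde f\|_{T,s}+4T^2c_s\|\tilde f\|_{T,0}\bigr).
\end{equation*}
Since $c_1=C_0\d(0)\leq C_0\d_*$, the factor $2^{4Tc_1}$ is bounded by a constant depending only on $T$. Finally I would use the estimate \eqref{m.16} of Lemma \ref{lemma:mS} for $\mS,\mS^{-1}$ to bound $\|\tilde\a\|_s,\|\tilde f\|_{T,s}$ in terms of $\|\a\|_s,\|f\|_{T,s}$ (with the corresponding low-norm terms producing a $\d(s)$ coefficient), and the same estimate to pass from $\|v\|_{T,s}$ back to $\|u\|_{T,s}=\|\mS v\|_{T,s}$. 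Collecting the terms and absorbing products like $\d(0)\d(s)$ into $\d(s)$ (which is legitimate because $\d(0)\leq\d_*$) yields \eqref{en.est.30}.

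There is no serious obstacle: the construction is routine once the conjugation lemma and the tame GWP for the constant-coefficient-plus-bounded-remainder operator are in hand. The only point requiring care is the bookkeeping to ensure that all constants depending on $s$ multiply only the low-norm quantities $\|f\|_{T,0}+\|\a\|_0$ and not the high-norm ones, so that the resulting bound is genuinely tame; this is guaranteed by the tame form of \eqref{m.15} and \eqref{m.16} and by the product-rule structure of \eqref{App20}.
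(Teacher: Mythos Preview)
Your proposal is correct and follows exactly the same route as the paper's proof: conjugate $\ttL_4$ to $\ttL_5=\pa_t+m\pa_{xxx}+\mR$ via the operator $\mS$ of Section~\ref{step-5}, apply Lemma~\ref{lemma:App2} to the resulting problem, and transfer the estimates back using Lemma~\ref{lemma:mS}. You have spelled out the invocation of part~(iv) of Lemma~\ref{lemma:App2} and the bookkeeping in more detail than the paper does, but the argument is the same.
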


\begin{proof}
Following the procedure given in Section \ref{step-5}, we define $\mS := I + \g(t,x) \pa_x^{-1}$ (see \eqref{L30}) with $\g(t,x):=-\tfrac{1}{3m}\pa_x^{-1}a_{14}(t,x)$.
We have that $u$ solves \eqref{AppL4}
if and only if $\widetilde u := \mS^{-1} u$ satisfies
\[
\ttL_5 \widetilde u = \widetilde f, \quad  
\widetilde u(0) = \widetilde \a
\]
where $\widetilde f := \mS^{-1} f$, $\widetilde \a := \mS^{-1}(0) \a$ and $\ttL_5=\pa_t+m\pa_{xxx}+\mR$, with $\mR=\mS^{-1}\{a_{15}+(a_{14}\g-(a_{14})_x)\pi_0 + (\ttL_4 \g) \pa_x^{-1}\}$.
Then the thesis follows by Lemmas \ref{lemma:App2} and \ref{lemma:mS}.
\end{proof}

\begin{lemma}\label{lemma:App4}
There exist universal positive constants $\s, \d_*$ with the following properties.
Let $s \geq 0$, let $m\geq 1/2$, and let $a_{12}(t,x), a_{13}(t,x)$ be two functions with 
$a_{12}, \pa_t a_{12}, a_{13} \in C([0,T],H^{s+\s}_x)$,
and let $\mL_3:=\pa_t + m \pa_{xxx} + a_{12} \pa_x + a_{13}$.
Let 
\[ 
\d(\mu) := \| a_{12}, \pa_t a_{12}, a_{13} \|_{T,\mu + \s} 
\quad \forall \mu \in [0,s].
\]
Assume $\d(0)\leq \d_*$.
Let $f\in C([0,T],H^s_x)$, $\a\in H^s_x$.
Then the Cauchy problem
\begin{equation} \label{AppL3}
\ttL_3 u = f, \quad 
u(0) = \a
\end{equation}
admits a unique solution $u \in C([0,T], H^s_x)$, with 
\begin{equation} \label{en.est.31}
\| u \|_{T,s} \leq C_s \left\{\| f \|_{T,s} + \| \a \|_s + \d (s) (\| f \|_{T,0} + \| \a \|_0 ) \right\}\ .
\end{equation}
\end{lemma}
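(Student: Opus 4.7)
The plan is to reduce the statement to Lemma \ref{lemma:App3} via the space-translation change of variable $\mT$ described in Section \ref{step-4}. More precisely, following \eqref{L28}, I would define
\[
p(\t) := -\frac{1}{2\pi}\int_0^\t \int_\T a_{12}(s,y)\,dy\,ds
\]
and $\mT$ as in \eqref{L25}. Then the computation \eqref{L26}-\eqref{L29} shows that $\mT^{-1} \ttL_3 \mT = \ttL_4 = \pa_t + m\pa_{xxx} + a_{14}(t,x)\pa_x + a_{15}(t,x)$ with coefficients $a_{14}, a_{15}$ given by \eqref{L27} and, crucially, $\int_\T a_{14}(t,x)\,dx = 0$ for every $t \in [0,T]$.

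Next, I would observe that the Cauchy problem \eqref{AppL3} is equivalent to
\[
\ttL_4 \tilde u = \tilde f, \quad \tilde u(0) = \tilde \a,
\]
where $\tilde u := \mT^{-1} u$, $\tilde f := \mT^{-1} f$, $\tilde \a := \mT^{-1}|_{t=0} \a$. By Lemma \ref{lemma:mT}, the operator $\mT$ is an isometry of $C([0,T],H^\mu_x)$ for every $\mu \geq 0$, so $\|\tilde f\|_{T,\mu} = \|f\|_{T,\mu}$ and $\|\tilde \a\|_\mu = \|\a\|_\mu$. Moreover, the quantitative bound \eqref{m.13} gives
\[
\|a_{14}, \pa_t a_{14}, a_{15}\|_{T,\mu+\s'} \leq C\,\|a_{12}, \pa_t a_{12}, a_{13}\|_{T,\mu+\s'+1}
\]
for a suitable $\s'$, provided $\|a_{12}\|_{T,0} \leq 1$, which is ensured by $\d(0) \leq \d_*$ after choosing $\s$ large enough.

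I would then choose $\s$ in Lemma \ref{lemma:App4} to be the $\s$ of Lemma \ref{lemma:App3} plus one (to absorb the derivative loss in \eqref{m.13}) and choose $\d_*$ small enough so that the smallness hypothesis of Lemma \ref{lemma:App3} is satisfied by $a_{14}, a_{15}$. Applying Lemma \ref{lemma:App3} to the Cauchy problem for $\ttL_4$ yields a unique solution $\tilde u \in C([0,T],H^s_x)$ with
\[
\|\tilde u\|_{T,s} \leq C_s\bigl\{ \|\tilde f\|_{T,s} + \|\tilde \a\|_s + \tilde\d(s)\bigl(\|\tilde f\|_{T,0} + \|\tilde \a\|_0\bigr)\bigr\},
\]
where $\tilde\d(s)$ is the analogue of $\d(s)$ for $\ttL_4$. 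Setting $u := \mT \tilde u$, the isometry property of $\mT$ together with the bound on $\tilde\d(s)$ in terms of $\d(s)$ gives precisely \eqref{en.est.31}. Uniqueness transfers from $\tilde u$ to $u$ because $\mT$ is a bijection on $C([0,T],H^s_x)$.

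The main (minor) technical point is the bookkeeping of the universal constant $\s$ and the smallness threshold $\d_*$: one must verify that the derivative loss in Lemma \ref{lemma:mT} and the smallness required by Lemma \ref{lemma:App3} can be simultaneously absorbed by choosing $\s$ in the present lemma slightly larger than the one in Lemma \ref{lemma:App3}; no genuine analytical difficulty arises beyond this.
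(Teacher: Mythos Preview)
Your proposal is correct and follows essentially the same route as the paper: conjugate $\ttL_3$ to $\ttL_4$ via the translation $\mT$ of Section~\ref{step-4}, then invoke Lemma~\ref{lemma:App3} together with the bounds of Lemma~\ref{lemma:mT}. The only simplification you missed is that $p(0)=0$, so $\mT(0)$ is the identity and $\tilde\a=\a$ outright, making the isometry argument for the initial datum unnecessary.
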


\begin{proof}
Following the procedure given in Section \ref{step-4}, we define $\mT h (t,x) := h (t, x + p(t))$ (see \eqref{L25}) with $p(t):=-\tfrac{1}{2\pi}\int_0^t \int_\T a_{12}(s,x) \; dx ds$.
We have that $u$ solves \eqref{AppL3}
if and only if $\widetilde u := \mT^{-1} u$ satisfies
\[
\ttL_4 \widetilde u = \widetilde f, \quad  
\widetilde u(0) = \a
\]
(note that $\mT(0)$ is the identity) where $\widetilde f := \mT^{-1} f$, and $\ttL_4=\pa_t+m\pa_{xxx}+ a_{14} \pa_x + a_{15}$, with $a_{14},a_{15}$ given by formula \eqref{L27}.
Then the thesis follows by Lemmas \ref{lemma:App3} and \ref{lemma:mT}.
\end{proof}

\begin{lemma}\label{lemma:App5}
There exist universal positive constants $\s, \d_*$ with the following properties.
Let $s \geq 0$, let $m\geq 1/2$, and let $a_8(t,x), a_9(t,x),a_{10}(t,x)$ be three functions with 
$a_8, \pa_t a_8, a_9$, $\pa_t a_9, a_{10} \in C([0,T],H^{s+\s}_x)$ and $\int_\T a_8(t,x)\; dx = 0$,
and let $\mL_2:=\pa_t + m \pa_{xxx} + a_8 \pa_{xx} + a_9 \pa_x + a_{10}$.
Let 
\[ 
\d(\mu) := \| a_8, \pa_t a_8, a_9, \pa_t a_9, a_{10} \|_{T,\mu + \s} 
\quad \forall \mu \in [0,s].
\] 
Assume $\d(0)\leq \d_*$.
Let $f\in C([0,T],H^s_x)$, $\a\in H^s_x$.
Then the Cauchy problem
\begin{equation} \label{AppL2}
\ttL_2 u = f, \quad u(0) = \a
\end{equation}
admits a unique solution $u \in C([0,T], H^s_x)$, with 
\begin{equation} \label{en.est.32}
\| u \|_{T,s} \leq 
C_s \left\{\| f \|_{T,s} + \| \a \|_s + \d (s) (\| f \|_{T,0} + \| \a \|_0 ) \right\}.
\end{equation}
\end{lemma}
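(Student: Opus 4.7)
The plan is to imitate the pattern of Lemmas \ref{lemma:App3} and \ref{lemma:App4}: perform the same conjugation as in Section \ref{step-3} to reduce the Cauchy problem for $\ttL_2$ to one for $\ttL_3$, and then invoke Lemma \ref{lemma:App4}.

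More precisely, I would define the multiplication operator $\mM h(t,x) := q(t,x) h(t,x)$ with $q(t,x) := \exp\{-\frac{1}{3m} (\pa_x\inv a_8)(t,x)\}$, as in \eqref{M1}, \eqref{M5}. The hypothesis $\int_\T a_8(t,x) \, dx = 0$ is precisely what makes $\pa_x\inv a_8$ well-defined and periodic, hence $q$ is periodic and well-defined. By the calculation \eqref{M7},
\[
\mM\inv \ttL_2 \mM = \ttL_3 := \pa_t + m\pa_{xxx} + a_{12}(t,x) \pa_x + a_{13}(t,x),
\]
with $a_{12}, a_{13}$ given by \eqref{M3}. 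Setting $\widetilde u := \mM\inv u$, the Cauchy problem $\ttL_2 u = f$, $u(0) = \a$ is equivalent to
\[
\ttL_3 \widetilde u = \widetilde f, \qquad \widetilde u(0) = \widetilde \a,
\]
where $\widetilde f := \mM\inv f$ and $\widetilde\a := \mM\inv(0)\a$.

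Next, I would fix $\s$ in the statement of Lemma \ref{lemma:App5} large enough to absorb both the $\s$ appearing in Lemma \ref{lemma:App4} and the loss of regularity introduced by $\mM$ in Lemma \ref{lemma:mM}. Under the smallness assumption $\d(0) \leq \d_*$, Lemma \ref{lemma:mM} guarantees (i) tame bounds $\| \mM^{\pm 1} h \|_{T,\mu} \leq C_\mu (\| h \|_{T,\mu} + \d(\mu) \| h \|_{T,0})$ for $\mu \in [0,s]$, and (ii) that the new coefficients $a_{12}, \pa_t a_{12}, a_{13}$ satisfy $\| a_{12}, \pa_t a_{12}, a_{13} \|_{T,\mu} \leq C_\mu \d(\mu)$, so that (after renaming $\s$) the hypotheses of Lemma \ref{lemma:App4} hold for $\ttL_3$. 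Applying Lemma \ref{lemma:App4} to $\widetilde u$ yields
\[
\| \widetilde u \|_{T,s}
\leq C_s \{ \| \widetilde f \|_{T,s} + \| \widetilde \a \|_s
+ \d(s) (\| \widetilde f \|_{T,0} + \| \widetilde \a \|_0) \}.
\]
Returning to $u = \mM \widetilde u$ and applying \eqref{M9} once to $u$ and once to $\widetilde f, \widetilde \a$ in the right-hand side, a direct bookkeeping with the product-of-tame-estimates pattern $\| AB \| \lesssim \| \cdot \|_s + \d(s) \| \cdot \|_0$ gives \eqref{en.est.32}. Uniqueness follows from the uniqueness part of Lemma \ref{lemma:App4}, since $\mM$ is an isomorphism on $C([0,T], H^s_x)$.

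The only slightly delicate point is purely bookkeeping: I must choose $\s$ in the present lemma strictly larger than the $\s$'s of Lemmas \ref{lemma:App4} and \ref{lemma:mM}, and check that the tame factors do not produce a product of two large norms (which would violate the tame structure). This is straightforward because $\mM$ is tame-Lipschitz and $a_{12}, a_{13}$ are already controlled by $\d(\mu)$ linearly, so every composition remains linear in the highest norm.
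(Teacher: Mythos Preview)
Your proof is correct and follows essentially the same approach as the paper: conjugate $\ttL_2$ by the multiplication operator $\mM$ of Section~\ref{step-3} to reduce to $\ttL_3$, then invoke Lemma~\ref{lemma:App4} together with the tame bounds of Lemma~\ref{lemma:mM}. The paper's proof is more terse, but the content is identical.
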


\begin{proof}
Following the procedure given in Section \ref{step-3}, we define $\mM h (t,x) := q(t,x) h(t,x)$ (see \eqref{M1}) with $q(t,x):=\exp\{-\tfrac{1}{3m} (\pa_x^{-1} a_8) (t,x) \}$.
We have that $u$ solves \eqref{AppL2}
if and only if $\widetilde u := \mM^{-1} u$ satisfies
\[
\ttL_3 \widetilde u = \widetilde f, \quad 
\widetilde u(0) = \widetilde \a
\]
where $\widetilde f := \mM^{-1} f$, $\widetilde \a:= \mM^{-1}(0) \a$, and $\ttL_3=\pa_t+m\pa_{xxx}+ a_{12} \pa_x + a_{13}$, with $a_{12},a_{13}$ given by formula \eqref{M3}.
Then the thesis follows by Lemmas \ref{lemma:App4} and \ref{lemma:mM}.
\end{proof}

\begin{lemma}\label{lemma:App6}
There exist universal positive constants $\s, \d_*$ with the following properties.
Let $s \geq 0$ and let $a_4(t), a_5(t,x),a_6(t,x),a_7(t,x)$ be four functions with $a_4 \in C^1 ([0,T], \R)$, $a_5, \pa_t a_5, a_6, \pa_t a_6, a_7 \in C([0,T],H^{s+\s}_x)$ and $\int_\T a_5(t,x)\; dx = 0$, and let $\mL_1:=\pa_t + a_4 \pa_{xxx} + a_5 \pa_{xx} + a_6 \pa_x + a_7$.
Let 
\begin{equation}\label{m.17}
\d(\mu) := \sup_{t\in[0,T]} | a_4(t) -1 | + \sup_{t\in(0,T)} |a_4' (t)| + \| a_5, \pa_t a_5, a_6, \pa_t a_6, a_7 \|_{T,\mu + \s} 
\quad \forall \mu \in [0,s].
\end{equation}
Assume $\d(0)\leq \d_*$.
Let $f\in C([0,T],H^s_x)$, $\a\in H^s_x$.
Then the Cauchy problem
\begin{equation} \label{AppL1}
\ttL_1 u = f, \quad u(0) = \a
\end{equation}
admits a unique solution $u \in C([0,T], H^s_x)$, with 
\begin{equation} \label{en.est.33}
\| u \|_{T,s} \leq C_s \left\{\| f \|_{T,s} + \| \a \|_s + \d (s) (\| f \|_{T,0} + \| \a \|_0 ) \right\}.
\end{equation}
\end{lemma}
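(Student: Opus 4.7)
The plan is to mirror the structure of Lemmas \ref{lemma:App3}, \ref{lemma:App4}, \ref{lemma:App5}, reducing the problem for $\ttL_1$ to the already-established result for $\ttL_2$ by applying the time reparametrization $\mB$ from Section \ref{step-2}. Concretely, I define $\psi$ and $m$ as in \eqref{L22} and set $\mB$, $\rho$ as in \eqref{L17}, \eqref{L18}. By \eqref{L23} we have the conjugation identity
\begin{equation*}
\mB^{-1} \ttL_1 \mB = \rho \, \ttL_2, \qquad
\ttL_2 = \pa_\tau + m \pa_{yyy} + a_8(\tau,y) \pa_{yy} + a_9(\tau,y) \pa_y + a_{10}(\tau,y),
\end{equation*}
with $a_8,a_9,a_{10}$ given by \eqref{L24}. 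Setting $\widetilde u := \mB^{-1} u$ and $\widetilde f := \rho^{-1} \mB^{-1} f$, the Cauchy problem \eqref{AppL1} is equivalent to
\begin{equation*}
\ttL_2 \widetilde u = \widetilde f, \qquad \widetilde u(0) = \a,
\end{equation*}
since $\psi(0)=0$ implies $(\mB^{-1} u)(0,y) = u(0,y) = \a(y)$.

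Next I verify the hypotheses of Lemma \ref{lemma:App5}. By Lemma \ref{lemma:mB} (taking $\s$ in the present statement larger than the $\s$ of Lemma \ref{lemma:mB} plus a fixed constant, and $\d_*$ correspondingly small), one has $|m-1| \leq C \d(0) \leq 1/2$, hence $m \geq 1/2$; the zero-mean condition $\int_\T a_8(\tau,y)\,dy=0$ holds by \eqref{m.12}; and the tame bounds \eqref{m.11} give
\begin{equation*}
\| a_8, \pa_\tau a_8, a_9, \pa_\tau a_9, a_{10} \|_{T,\mu+\s'}
\leq C \, \d(\mu), \qquad \mu \in [0,s],
\end{equation*}
for the value of $\s'$ required by Lemma \ref{lemma:App5}. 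Choosing $\d_*$ sufficiently small one fits the smallness hypothesis there, so Lemma \ref{lemma:App5} gives a unique $\widetilde u \in C([0,T], H^s_x)$ solving the transformed Cauchy problem, and
\begin{equation*}
\| \widetilde u \|_{T,s}
\leq C_s \big\{ \| \widetilde f \|_{T,s} + \| \a \|_s + \d(s) ( \| \widetilde f \|_{T,0} + \| \a \|_0 ) \big\}.
\end{equation*}

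Finally I undo the change of variables. By \eqref{m.9} the operator $\mB$ is an isometry of $C([0,T], H^\mu_x)$ for every $\mu \geq 0$, so $\| u \|_{T,s} = \| \widetilde u \|_{T,s}$ and similarly $\| \mB^{-1} f \|_{T,\mu} = \| f \|_{T,\mu}$. Combined with the pointwise bound $|\rho^{-1}(\tau)-1| \leq C\d(0) \leq 1/2$ from \eqref{m.11}, this yields $\| \widetilde f \|_{T,\mu} \leq 2 \| f \|_{T,\mu}$, and substituting back one obtains \eqref{en.est.33} with a constant $C_s$ depending only on $s$ (after absorbing the $\d$-dependent factors coming from $\rho^{-1}$ into the $\d(s)$ term). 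Uniqueness in $C([0,T], H^s_x)$ transfers from $\widetilde u$ to $u$ because $\mB$ is a bijection between these spaces.

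There is no real obstacle beyond bookkeeping: the only points requiring care are (i) checking that the constant $\s$ in \eqref{m.17} is large enough to feed Lemma \ref{lemma:App5} through the loss incurred in \eqref{m.11}, and (ii) ensuring that the smallness threshold $\d_*$ in the present lemma is taken smaller than both the threshold of Lemma \ref{lemma:mB} and the threshold needed to apply Lemma \ref{lemma:App5} to the transformed coefficients.
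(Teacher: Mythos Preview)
Your proof is correct and follows essentially the same route as the paper: conjugate $\ttL_1$ to $\ttL_2$ via the time reparametrization $\mB$ of Section~\ref{step-2}, then invoke Lemma~\ref{lemma:App5} and Lemma~\ref{lemma:mB}. You are in fact slightly more careful than the paper in writing $\widetilde f = \rho^{-1}\mB^{-1} f$ (the paper's proof writes $\widetilde f = \mB^{-1} f$, silently absorbing the harmless factor $\rho^{-1}\approx 1$), and your remarks on choosing $\s$ and $\d_*$ compatibly with the thresholds of Lemmas~\ref{lemma:mB} and~\ref{lemma:App5} are exactly the bookkeeping the paper leaves implicit.
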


\begin{proof}
Following the procedure given in Section \ref{step-2}, we define $\mB h (t,x) := h( \psi(t) ,x)$ (see \eqref{L17}) with $\psi(t):=\tfrac{1}{m}\int_0^t a_4(s)\, ds$, 
where $m:=\tfrac{1}{T} \int_0^T a_4(t) \, dt$.
We have that $u$ solves \eqref{AppL1}
if and only if $\widetilde u := \mB^{-1} u$ satisfies
\[
\ttL_2 \widetilde u = \widetilde f, \quad  
\widetilde u(0) = \a
\]
(note that $\mB(0)$ is the identity) where $\widetilde f := \mB^{-1} f$, and $\ttL_2=\pa_t+m\pa_{xxx} + a_8 \pa_{xx} + a_9 \pa_x + a_{10}$, with $a_8, a_9,  a_{10}$ given by formula \eqref{L24} (see also \eqref{L18}).
Then the thesis follows by Lemma \ref{lemma:App5} and \ref{lemma:mB}.
\end{proof}

\begin{lemma}\label{lemma:App7}
There exist universal positive constants $\s, \d_*$ with the following properties.
Let $s \geq 0$ and let $a_3(t,x), a_2(t,x), a_1(t,x), a_0(t,x)$ be four functions with 
$a_3$, $\pa_t a_3$, $\pa_{tt} a_3$, $a_1$, $\pa_t a_1$, $a_0 \in C([0,T],H^{s+\s}_x)$ and $a_2 = c \pa_x a_3$ for some $c\in\R$. Let 
\begin{equation}\label{m.18}
\d(\mu) := \| a_3, \pa_t a_3, \pa_{tt} a_3, a_1, \pa_t a_1, a_0 \|_{T,\mu + \s} 
\quad \forall \mu \in [0,s].
\end{equation}
Assume $\d(0)\leq \d_*$.
Let $\mL_0:=\pa_t + (1+a_3) \pa_{xxx} + a_2 \pa_{xx} + a_1 \pa_x + a_0$.
Let $f\in C([0,T],H^s_x)$, $\a\in H^s_x$.
Then the Cauchy problem
\begin{equation} \label{AppL0}
\ttL_0 u = f, \quad u(0) = \a
\end{equation}
admits a unique solution $u \in C([0,T], H^s_x)$, with 
\begin{equation} \label{en.est.34}
\| u \|_{T,s} 
\leq C_s \left\{\| f \|_{T,s} + \| \a \|_s + \d (s) (\| f \|_{T,0} + \| \a \|_0 ) \right\}.
\end{equation}
\end{lemma}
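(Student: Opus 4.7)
The plan is to follow the exact pattern established by Lemmas \ref{lemma:App3}--\ref{lemma:App6}: conjugate $\ttL_0$ to $\ttL_1$ by means of the change of space variable $\mA$ introduced in Section \ref{step-1}, and then invoke Lemma \ref{lemma:App6}.

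Concretely, I would fix $\s$ large enough so that both Lemma \ref{lemma:mA} and Lemma \ref{lemma:App6} apply, and fix $\d_*$ small enough to satisfy the smallness thresholds in both. Define $\mA$ by \eqref{L4}, \eqref{L12}, \eqref{L13}, \eqref{L14}. Lemma \ref{lemma:mA} then provides: (i) tame boundedness of $\mA$ and $\mA^{-1}$ on $C([0,T], H^\mu_x)$, cf.\ \eqref{m.2}; (ii) tame estimates \eqref{m.3} on the coefficients $a_4(t), a_5(t,x), a_6(t,x), a_7(t,x)$ of the conjugated operator $\ttL_1 = \mA^{-1}\ttL_0 \mA$ given in \eqref{L15}; and (iii) the crucial zero-mean property $\int_\T a_5(t,y)\,dy = 0$ in \eqref{m.4}, which uses precisely the structural assumption $a_2 = c\, \pa_x a_3$. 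Hence the coefficients of $\ttL_1$ fulfill the hypotheses of Lemma \ref{lemma:App6}, with the $\d$ appearing there controlled by a constant times the $\d$ defined in \eqref{m.18} (provided that $\s$ in Lemma \ref{lemma:App7} is taken larger than the $\s$ of Lemma \ref{lemma:App6} by a fixed amount).

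With this setup, $u \in C([0,T], H^s_x)$ solves \eqref{AppL0} if and only if $\widetilde u := \mA^{-1} u$ solves the Cauchy problem
\[
\ttL_1 \widetilde u = \widetilde f, \qquad \widetilde u(0) = \widetilde \a,
\]
where $\widetilde f := \mA^{-1} f$ and $\widetilde \a := \mA^{-1}(0) \a$. Lemma \ref{lemma:App6} yields existence, uniqueness and a tame bound of the form \eqref{en.est.33} for $\widetilde u$. To obtain \eqref{en.est.34} for $u$, I would apply \eqref{m.2} in Lemma \ref{lemma:mA} three times: to estimate $\|\widetilde f\|_{T,\mu}, \|\widetilde \a\|_\mu$ for $\mu = 0$ and $\mu = s$ in terms of $\|f\|_{T,\mu}, \|\a\|_\mu$, and then to pass from $\widetilde u = \mA^{-1} u$ back to $u = \mA \widetilde u$. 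Products of the form $\d(s)\d(0)$ are absorbed using $\d(0) \leq \d_* \leq 1$.

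The only real subtlety is the bookkeeping of $\s$ and of the smallness constants through the chain $\ttL_0 \rightsquigarrow \ttL_1 \rightsquigarrow \cdots \rightsquigarrow \ttL_5$: each conjugation costs a fixed number of derivatives and requires the previous threshold to be met after composition, so the $\s, \d_*$ here must be chosen strictly larger (resp.\ smaller) than those of Lemma \ref{lemma:App6}. No new analytic difficulty arises beyond what was already handled in Lemmas \ref{lemma:App3}--\ref{lemma:App6}; the proof is the final, and most routine, step in the cascade.
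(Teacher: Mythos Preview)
Your proposal is correct and follows essentially the same approach as the paper: conjugate $\ttL_0$ to $\ttL_1$ via the space-variable change $\mA$ of Section~\ref{step-1}, apply Lemma~\ref{lemma:App6} to the transformed Cauchy problem $\ttL_1 \widetilde u = \mA^{-1} f$, $\widetilde u(0) = \mA^{-1}(0)\a$, and transfer the tame estimate back using Lemma~\ref{lemma:mA}. Your write-up is in fact more explicit than the paper's about the bookkeeping of $\s,\d_*$ and the role of the zero-mean property \eqref{m.4}, but the argument is identical.
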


\begin{proof}
Following the procedure given in Section \ref{step-1}, we define $(\mA h) (t,x) := h( t ,x + \b(t,x))$ (see \eqref{L4}) with $\b(t,x):= (\pa_x^{-1} \rho_0) (t,x)$, where 
$\rho_0$ is defined in \eqref{L12}-\eqref{L13}.
We have that $u$ solves \eqref{AppL0}
if and only if $\widetilde u := \mA^{-1} u$ satisfies
\[
\ttL_1 \widetilde u = \widetilde f, \quad  
\widetilde u(0) = \widetilde \a
\]
where $\widetilde f := \mA^{-1} f$, $\widetilde \a := \mA^{-1}(0) \a$, and $\ttL_1=\pa_t+a_4\pa_{xxx} + a_5 \pa_{xx} + a_6 \pa_x + a_7$, with $a_4$ not depending on the space variable $x$ and with $a_4, a_5,  a_6,a_7$ given by formula \eqref{L8}.
Then the thesis follows by Lemmas \ref{lemma:App6} and \ref{lemma:mA}.
\end{proof}

\begin{remark}
\label{rem:Lk}
\let\qed\relax
Consider the operators $\mL_0$, \ldots, $\mL_5$ 
defined in Lemmas \ref{lemma:App2}-\ref{lemma:App7}. 
Define 
\begin{align*}
\mL_0^* h & := -\pa_t h - \pa_{xxx} [(1+a_3) h] + \pa_{xx} (a_2 h) -\pa_x (a_1 h) + a_0 h\\
\mL_1^* h & := -\pa_t h - a_4 \pa_{xxx}  h + \pa_{xx} (a_5 h) -\pa_x (a_6 h) + a_7 h\\
\mL_2^* h & := -\pa_t h - m \pa_{xxx}  h + \pa_{xx} (a_8 h) -\pa_x (a_9 h) + a_{10} h\\
\mL_3^* h & := -\pa_t h - m \pa_{xxx}  h -\pa_x (a_{12} h) + a_{13} h\\
\mL_4^* h & := -\pa_t h - m \pa_{xxx}  h -\pa_x (a_{14} h) + a_{15} h\\
\mL_5^* h & := -\pa_t h - m \pa_{xxx}  h + \mR^T h.
\end{align*}
It is straightforward to check that Lemmas \ref{lemma:App2}-\ref{lemma:App7} also hold when the operator $\mL_k$ ($k=0, \ldots , 5$) is replaced by $\mL_k^*$. 
The crucial observation is that for all $k=0, \ldots , 5$ (see Remark \ref{come fosse Ham} for the case $k=0$) the operator $-\mL_k^*$ has the same structure as $\mL_k$ (one might need to worsen the constants $\sigma$ since the coefficients of $-\mL_k^*$ involve space derivatives of the coefficients of $\mL_k$). 
It is also immediate to verify that the same estimates also hold for the backward Cauchy problems
\begin{equation} \label{App13}
\begin{cases}
\mL_k u = f \\ 
u(T) = \a
\end{cases}
\qquad\qquad
\begin{cases}
\mL_k^* u = f \\ 
u(T) = \a
\end{cases}
\qquad \qquad k=0,\ldots,5. 
\end{equation}
\end{remark}

\section{Appendix B. Nash-Moser theorem} 
\label{sec:NM}

In this section we prove a Nash-Moser implicit function theorem 
that is a modified version of the theorem in H\"ormander \cite{Olli}.
With respect to \cite{Olli}, here (Theorem \ref{thm:NM}) 
we assume slightly stronger hypotheses on the nonlinear operator $\Phi$ 
and its second derivative. 
These hypotheses are naturally verified in applications to PDEs.
We use the iteration scheme of \cite{Geodesy} 
(called \emph{discrete Nash method} by H\"ormander),
which is neither the Newton scheme with smoothings 
used in \cite{BBPro}, \cite{BCPro}, \cite{BBM-auto}, 
nor the scheme in \cite{Olli} and \cite{AG}. 
The scheme of \cite{Geodesy} is based on a telescoping series like in \cite{Olli}, 
but some corrections $y_n$ (see \eqref{Olli.8}) are also introduced.
In this way the scheme converges directly to a solution of the equation 
$\Phi(u) = \Phi(0) + g$, avoiding the intermediate step in \cite{Olli} 
where Leray-Schauder theorem is applied. 
This makes it possible to remove two assumptions of H\"ormander's theorem 
\cite{Olli}, which are 
the compact embeddings $F_b \hookrightarrow F_a$ in the codomain scale 
of Banach spaces $(F_a)_{a \geq 0}$, 
and the continuity of the approximate right inverse $\Psi(v)$ 
with respect to the approximate linearization point $v$.
We point out that, unlike Theorem 2.2.2 of \cite{Geodesy}, 
our Theorem \ref{thm:NM} also applies to the case of Sobolev spaces.

Let us begin with recalling the construction of ``weak'' spaces in \cite{Olli}.

\medskip

Let $E_a$, $a \geq 0$, be a decreasing family of Banach spaces with injections  
$E_b \hookrightarrow E_a$ of norm $\leq 1$ when $b \geq a$. 
Set $E_\infty = \cap_{a\geq 0} E_a$ with the weakest topology making the 
injections $E_\infty \hookrightarrow E_a$ continuous. 
Assume that $S_\theta : E_0 \to E_\infty$ for $\theta \geq 1$ are linear operators 
such that, with constants $C$ bounded when $a$ and $b$ are bounded,
\begin{alignat}{2}
\label{S1} \| S_{\theta} u \|_b & \leq C \|u\|_a & & \text{ if }\; b\leq a; \\
\label{S2} \| S_{\theta} u\|_b & \leq C\theta^{b-a} \|u\|_a & & \text{ if }\; a<b; \\
\label{S3} \| u-S_{\theta}u\|_b & \leq C\theta^{b-a}\|u\|_a & & \text{ if }\; a>b; \\ 
\label{S4} \Big\|\frac{d}{d\theta} S_{\theta} u \Big\|_b & \leq C\theta^{b-a-1}\|u\|_a \,. & &
\intertext{
From \eqref{S2}-\eqref{S3} one can obtain the logarithmic convexity of the norms}
\label{S5} \|u\|_{\lambda a +(1-\lambda) b } & \leq C\|u\|_a^\lambda\|u\|_b^{1-\lambda} 
&& \text{ if } \; 0<\lambda<1.
\end{alignat}
Consider the sequence $\{\theta_j\}_{j\in\N},$ with $1=\theta_0 < \theta_1 < \ldots \rightarrow \infty$, such that $\frac{\theta_{j+1}}{\theta_j}$ is bounded. 
Set $\Delta_{j} := \theta_{j+1}-\theta_j$ and 
\begin{equation} \label{new.24}
R_0 u := \frac{S_{\theta_1}u}{\Delta_0}\,, \qquad 
R_j u := \frac{S_{\theta_{j+1}} u - S_{\theta_j}u}{\Delta_j}\,, \quad  j \geq 1.
\end{equation}
By \eqref{S3} we deduce that, if $u\in E_b$ for some $b>a$, then 
\begin{equation} \label{u serie} 
u = \sum_{j=0}^\infty \Delta_j R_j u
\end{equation}
with convergence in $E_a$.
Moreover, \eqref{S4} implies that, for all $b$, 
\begin{equation}\label{Rj}
\|R_ju\|_b\leq C_{a,b}\theta_j^{b-a-1} \| u \|_a \,. 
\end{equation}
Conversely, assume that $a_1<a<a_2$, that $u_j \in E_{a_2}$ and that 
\begin{equation}\label{u_j}
\|u_j\|_b\leq M \theta_j^{b-a-1} \quad \text{if} \ \ b=a_1 \ \ \text{or} \ \ b=a_2.
\end{equation}
By \eqref{S5} this remains true with a constant factor on the right-hand side if $a_1<b<a_2$, 
so that $u=\sum\Delta_j u_j$ converges in $E_b$ if $b<a$. 

Let $E'_a$ be the set of all sums $u=\sum\Delta_j u_j$ 
with $u_j$ satisfying \eqref{u_j} and introduce the norm 
$\|u\|'_a$ as the infimum of $M$ over all such decompositions.
It follows that $\| \ \|^\prime_a$ is stronger than $\| \ \|_b$ if $a > b$, 
while \eqref{u serie} and \eqref{Rj} show that $\| \ \|^\prime_a$ is weaker than $\| \ \|_a$.
Moreover 
$(i)$ the space $E'_a$ and, up to equivalence, its norm are independent of the choice of $a_1$ and $a_2$; 
$(ii)$ $E'_a$ is defined by \eqref{Rj} for any values of $b$ to the left and to the right of $a$;
$(iii)$ $E'_a$ does not depend on the smoothing operators;
$(iv)$ in \eqref{S3} we can replace $\|u\|_a$ by $\|u\|_a'$, namely 
\begin{equation} \label{S3 plus} 
\| u - S_\theta u \|_b 
\leq C'_{a,b} \theta^{b-a} \| u \|_a' \quad \text{if} \ \ a > b,  
\end{equation}
if we take another constant $C'_{a,b}$, 
which may tend to $\infty$ as $b$ approaches $a$. 
All these four statements $(i)$-$(iv)$ are proved in \cite{Olli}.

%

Now let us suppose that we have another family $F_a$ of decreasing Banach spaces with smoothing operators having the same properties as above. We use the same notation also for the smoothing operators. 
Unlike \cite{Olli}, here we do not need to assume that the embedding 
$F_b \hookrightarrow F_a$ is compact for $b>a$.

\begin{theorem} \label{thm:NM}
Let $a_1, a_2, \a, \b, a_0, \mu$ be real numbers with 
\begin{equation} \label{ineq 2016}
0 \leq a_0 \leq \mu \leq a_1, \quad 
a_1 + \frac{\b}{2} \, \leq \a < a_1 + \b \leq a_2, \quad 
2\a < a_1 + a_2. 
\end{equation}
Let $V$ be a convex neighborhood of $0$ in $E_\mu$. 
Let $\Phi$ be a map from $V$ to $F_0$ such that $\Phi : V \cap E_{a+\mu} \to F_a$ 
is of class $C^2$ for all $a \in [0, a_2 - \mu]$, with 
\begin{equation}\label{Phi sec}
\|\Phi''(u)[v,w] \|_a \leq C \big( \| v \|_{a+\mu} \| w \|_{a_0} 
+ \| v \|_{a_0} \| w \|_{a+\mu}
+ \| u \|_{a+\mu} \| v \|_{a_0} \| w \|_{a_0} \big)
\end{equation}
for all $u \in V \cap E_{a+\mu}$, $v,w \in E_{a+\mu}$.
Also assume that $\Phi'(v)$, for $v \in E_\infty \cap V$ 
belonging to some ball $\| v \|_{a_1} \leq \d_1$,
has a right inverse $\Psi(v)$ mapping $F_\infty$ to $E_{a_2}$, and that
\begin{equation}  \label{tame in NM}
\|\Psi(v)g\|_a\leq C(\|g\|_{a + \b - \a} + \| g \|_0 \| v \|_{a + \b}) 
\quad \forall a \in [a_1, a_2].
\end{equation}
There exists $\d > 0$ such that, for every $g \in F'_\b$ in the ball $\| g \|_\b' \leq \d$,
there exists $u \in E_\a'$, with $\| u \|_\a' \leq C \| g \|_\b'$, 
solving $\Phi(u) = \Phi(0) + g$.
\end{theorem}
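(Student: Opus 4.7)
I will implement the discrete Nash method of H\"ormander \cite{Geodesy}, building the solution as a telescoping series $u=\sum_{j\ge 0}\D_j u_j$ whose pieces satisfy the a priori decay
\[
\|u_j\|_b\leq M\,\theta_j^{b-\a-1}\qquad\text{for }b\in\{a_1,a_2\}.
\]
By the characterization \eqref{u_j} of the weak space $E'_\a$, this will give $u\in E'_\a$ with $\|u\|'_\a\leq CM$. The first step is to decompose the datum: since $g\in F'_\b$, one can write $g=\sum_j\D_j g_j$ with $\|g_j\|_b\leq C\|g\|'_\b\,\theta_j^{b-\b-1}$ for $b$ in a range covering $\{0,\,a_1+\b-\a,\,a_2+\b-\a\}$; all three values are admissible thanks to \eqref{ineq 2016} (in particular $a_1+\b-\a\geq 0$ because $\a<a_1+\b$).

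For the iteration, set $w_n:=\sum_{j<n}\D_j u_j$, $v_n:=S_{\theta_n}w_n$, and
\[
u_n \defn \Psi(v_n)\bigl(g_n+y_n\bigr),
\]
where $y_n$ is a correction engineered so that the residual $E_n:=\Phi(w_n)-\Phi(0)-\sum_{j<n}\D_j(g_j+y_j)$ telescopes to zero. Concretely, from $\Phi(w_{n+1})-\Phi(w_n)=\D_n\Phi'(v_n)u_n+Q_n$ with
\[
Q_n = \int_0^1(1-t)\Phi''(w_n+t\D_n u_n)[\D_n u_n,\D_n u_n]\,dt + [\Phi'(w_n)-\Phi'(v_n)]\D_n u_n,
\]
the prescription $\Phi'(v_n)u_n=g_n+y_n$ forces $y_n$ to be (essentially) the sum of the quadratic Taylor remainder from step $n-1$, the defect $[\Phi'(w_{n-1})-\Phi'(v_{n-1})]u_{n-1}$, and the high-frequency remainder $(I-S_{\theta_{n+1}})g-(I-S_{\theta_n})g$ at scale $\theta_n$. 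The quadratic term is controlled by \eqref{Phi sec}, the $\Phi'(w_n)-\Phi'(v_n)$ defect by the same tame bound applied to $v_n-w_n$, and the smoothing defects by \eqref{S3}.

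The core of the argument is the inductive closure of the bound $\|u_n\|_b\leq M\theta_n^{b-\a-1}$ at $b\in\{a_1,a_2\}$. Assuming it for $j<n$, interpolation \eqref{S5} extends the decay to all $b\in[a_1,a_2]$; summing the series and applying \eqref{S2} yields
\[
\|v_n\|_{b+\b}\leq CM\,\theta_n^{(b+\b-\a)\vee 0},
\]
while \eqref{Phi sec} combined with $\|\D_n u_n\|_{a_0}\leq CM\theta_n^{a_0-\a}$ and the interpolated bound at intermediate levels translates into
\[
\|y_n\|_{b+\b-\a}\leq CM^2\,\theta_n^{b-\a-1},\qquad
\|y_n\|_0\leq CM^2\,\theta_n^{-\b-1}.
\]
Plugging into \eqref{tame in NM},
\[
\|u_n\|_b\leq C\bigl(\|g_n+y_n\|_{b+\b-\a}+\|g_n+y_n\|_0\,\|v_n\|_{b+\b}\bigr)\leq\bigl(C\|g\|'_\b+CM^2\bigr)\theta_n^{b-\a-1}.
\]
For $M$ of order $\|g\|'_\b$ and $\|g\|'_\b$ sufficiently small, this closes the induction. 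The arithmetic condition $2\a<a_1+a_2$ is exactly what is needed to make the $M^2$ contribution beat the target exponent at both endpoints $b=a_1$ and $b=a_2$ simultaneously — the familiar superlinear-convergence budget of Nash--Moser.

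Once the inductive bound is established, $u=\sum\D_j u_j$ defines an element of $E'_\a$ with $\|u\|'_\a\leq C\|g\|'_\b$. Passing to the limit in the residual identity (using \eqref{S3 plus} to send $S_{\theta_n}g\to g$ in $F_b$ for $b<\b$, which is legitimate since $g\in F'_\b$) gives $\Phi(u)=\Phi(0)+g$. I expect the main obstacles to be: (i) the precise algebraic design of $y_n$ so that the telescoping is \emph{exact} rather than merely approximate, which entails careful bookkeeping of the three error contributions (quadratic remainder, $\Phi'(w_n)-\Phi'(v_n)$ defect, and smoothing defect); and (ii) verifying the exponent balance at $b=a_2$ simultaneously with $b=a_1$, where the growing factor $\|v_n\|_{a_2+\b}$ multiplies $\|g_n+y_n\|_0$ and the condition $2\a<a_1+a_2$ must be invoked in its sharp form.
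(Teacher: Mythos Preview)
Your plan is essentially the paper's proof: the discrete Nash iteration of \cite{Geodesy} with corrections $y_n$, inductive decay $\|h_j\|_b\le K\|g\|'_\b\,\theta_j^{b-\a-1}$ at the endpoints $b\in\{a_1,a_2\}$, and closure via \eqref{tame in NM} together with \eqref{Phi sec}.

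One correction to your description of $y_n$, since getting this right is exactly your obstacle~(i). There is no ``high-frequency remainder $(I-S_{\theta_{n+1}})g-(I-S_{\theta_n})g$'' term: the pieces $g_j=R_j g$ already exhaust $g$, so no smoothing defect on $g$ enters $y_n$. More importantly, $y_n$ cannot be built from $e_{n-1}$ alone; it must redistribute \emph{all} earlier errors $e_0,\dots,e_{n-1}$, suitably smoothed. The paper takes
\[
y_j \;=\; -S_{\theta_j}e_{j-1}\;-\;R_{j-1}\!\sum_{i=0}^{j-2}e_i,
\]
which is engineered so that $\sum_{j\le k}(e_j+y_j)=e_k+(I-S_{\theta_k})\sum_{j<k}e_j$ telescopes exactly. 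It is the second sum $R_{j-1}\sum_{i\le j-2}e_i$, estimated at the high index $a_2+\b-\a$, that invokes $a_1+a_2>2\a$ in its sharp form (the paper's \eqref{new.18}--\eqref{new.19}); the naive choice $y_n=-e_{n-1}$ would not give the required decay $\|y_n\|_{a_2+\b-\a}\lesssim\theta_n^{a_2-\a-1}$ because $e_{n-1}$ is not controlled at indices above $a_2-\mu$.
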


\begin{proof}
We follow the proof in \cite{Olli} where possible, 
but we use a different iteration scheme. 
Let $\theta_j := j+1$, 
so that $\D_j = 1$ for all $j$. 
Let $g \in F_\b'$ and $g_j := R_j g$. Thus 
\begin{equation} \label{Olli.7}
g = \sum_{j=0}^\infty g_j, \quad
\| g_j \|_b \leq C_b \theta_j^{b-\b-1} \| g \|_\b'
\quad \forall b \in [0,+\infty). 
\end{equation}
We claim that if $\| g \|_\b'$ is small enough, 
then we can define a sequence $u_j \in V \cap E_{a_2}$ with $u_0 := 0$ by the recursion formula
\begin{equation} \label{Olli.8}
u_{j+1} := u_j + h_j, \quad
v_j := S_{\theta_j} u_j, \quad
h_j := \Psi(v_j) (g_j + y_j) \quad \forall j \geq 0,
\end{equation}
where $y_0 := 0$, 
\begin{equation} \label{new.1}
y_1 := - S_{\theta_1} e_0, \qquad
y_j := - S_{\theta_j} e_{j-1} - R_{j-1} \sum_{i=0}^{j-2} e_i 
\quad \ \forall j \geq 2,
\end{equation}
and $e_j := e_j' + e_j''$, 
\begin{equation} \label{new.2}
e_j' := \Phi(u_j + h_j) - \Phi(u_j) - \Phi'(u_j) h_j , 
\qquad
e_j'' := (\Phi'(u_j) - \Phi'(v_j)) h_j.
\end{equation}
We prove that for all $j \geq 0$
\begin{alignat}{2}
\| h_j \|_a 
& \leq K_1 \| g \|_\b' \, \theta_j^{a-\a-1} \quad && \forall a \in [a_1, a_2], 
\label{Olli.9}
\vspace{2pt} \\
\| v_j \|_a 
& \leq K_2 \| g \|_\b' \, \theta_j^{a-\a} \quad && \forall a \in [a_1 + \b, a_2 + \b], 
\label{Olli.10}
\vspace{2pt} \\
\| u_j - v_j \|_a 
& \leq K_3 \| g \|_\b' \, \theta_j^{a-\a} \quad && \forall a \in [0, a_2].
\label{Olli.11}
\end{alignat}
For $j=0$, \eqref{Olli.10} and \eqref{Olli.11} are trivially satisfied, 
and \eqref{Olli.9} follows from \eqref{Olli.7} because $h_0 = \Psi(0)g_0$ and $\theta_0 = 1$.

Now assume that \eqref{Olli.9}, \eqref{Olli.10}, \eqref{Olli.11} hold for $j=0,\ldots,k$, 
for some $k \geq 0$. 
First we prove \eqref{Olli.11} for $j=k+1$.
Since $u_{k+1} = \sum_{j=0}^k h_j$, 
the definition of the norm of $E_\a'$ and \eqref{Olli.9} for $j=0,\ldots,k$ imply that
$ 
\| u_{k+1} \|_\a' \leq K_1 \| g \|_\b'.
$ 
By \eqref{S3 plus} one has
\begin{equation} \label{new.4}
\| u_{k+1} - v_{k+1} \|_0 \leq C K_1 \| g \|_\b' \theta_{k+1}^{-\a}
\end{equation}
where the constant $C$ depends on $\a$. 
From now until the end of this proof we denote by $C$ 
any constant (possibly different from line to line) 
depending only on $a_1, a_2, \a, \b, \mu, a_0$, which are fixed parameters.
From \eqref{Olli.9} with $j=0,\ldots,k$ we get
\begin{equation} \label{new.5}
\| u_{k+1} \|_a \leq K_1 \| g \|_\b' \sum_{j=0}^k \theta_j^{a-\a-1} \quad 
\forall a \in [a_1, a_2].
\end{equation}
We note that 
\begin{equation} \label{new.7}
\sum_{j=0}^k \theta_j^{p-1} \leq \frac{2}{p} \, \theta_{k+1}^p \quad
\forall p > 0. 
\end{equation}
For $a = a_2$, by \eqref{S1} one gets $\| v_{k+1} \|_{a_2} \leq C \| u_{k+1} \|_{a_2}$. 
Thus, using \eqref{new.7} at $p = a_2 - \a$,
\begin{equation} \label{new.6}
\| u_{k+1} - v_{k+1} \|_{a_2} \leq C \| u_{k+1} \|_{a_2} 
\leq C K_1 \| g \|_\b' \theta_{k+1}^{a_2-\a}.
\end{equation}
Using \eqref{S5} to interpolate between \eqref{new.4} and \eqref{new.6}, 
we get \eqref{Olli.11} for $j=k+1$, for all $a \in [0,a_2]$, 
provided that $K_3 \geq C K_1$. 

To prove \eqref{Olli.10} for $j=k+1$, we use \eqref{S2}, \eqref{new.5} 
and \eqref{new.7} and we get
\[
\| v_{k+1} \|_a 
\leq C \theta_{k+1}^{a - a_1 - \b} \| u_{k+1} \|_{a_1 + \b}
\leq C \theta_{k+1}^{a - a_1 - \b} K_1 \| g \|_\b' \sum_{j=0}^k \theta_j^{a_1 + \b -\a - 1}
\leq C K_1 \| g \|_\b' \, \theta_{k+1}^{a -\a}
\]
for all $a \in [a_1 + \b, a_2 + \b]$. 
This gives \eqref{Olli.10} for $j = k+1$ provided that 
$K_2 \geq C K_1$.

To prove \eqref{Olli.9} for $j=k+1$, we begin with proving that 
\begin{equation} \label{new.8}
\| y_{k+1} \|_b \leq C K_1 (K_1 + K_3) \| g \|_\b'^2 \, \theta_{k+1}^{b-\b-1} 
\quad \forall b \in [0,a_2 + \b - \a].
\end{equation}
Since $u_j, v_j, u_j + h_j$ belong to $V$ for all $j = 0,\ldots, k$, 
we use Taylor formula and \eqref{Phi sec} to deduce that, for $j = 0, \ldots, k$ 
and $a \in [0, a_2 - \mu]$,
\begin{align} \label{new.9}
\| e_j \|_a 
& \leq C ( \| h_j \|_{a_0} \| h_j \|_{a+\mu} 
+ \| u_j \|_{a+\mu} \| h_j \|_{a_0}^2 
+ \| h_j \|_{a_0} \| v_j - u_j \|_{a+\mu}
\notag \\ & \qquad \ 
+ \| h_j \|_{a + \mu} \| v_j - u_j \|_{a_0}
+ \| u_j \|_{a + \mu} \| h_j \|_{a_0} \| v_j - u_j \|_{a_0} ).
\end{align}
Hence at $j = k$, using \eqref{S2} and then \eqref{new.9}, we have
\begin{align} \label{new.10}
\| S_{\theta_{k+1}} e_k \|_{a_2 + \b - \a} 
& \leq C \theta_{k+1}^p \| e_k \|_{a_2 + \b - \a - p} 
\notag \\ & 
\leq C \theta_{k+1}^p ( \| h_k \|_{a_0} \| h_k \|_{q} 
+ \| u_k \|_{q} \| h_k \|_{a_0}^2 
+ \| h_k \|_{a_0} \| v_k - u_k \|_{q}
\notag \\ & \qquad \quad
+ \| h_k \|_{q} \| v_k - u_k \|_{a_0}
+ \| u_k \|_{q} \| h_k \|_{a_0} \| v_k - u_k \|_{a_0})
\end{align}
where $p := \max \{ 0, \b - \a + \mu \}$ 
and $q := a_2 + \b - \a - p + \mu$. 
Note that $a_2 + \b - \a - p \geq 0$ because $a_2 \geq \mu$.
Since $q \leq a_2$, using also \eqref{new.7} we have
\begin{equation} \label{new.11}
\| u_k \|_q \leq \| u_k \|_{a_2} 
\leq \sum_{j=0}^{k-1} \| h_j \|_{a_2}
\leq K_1 \| g \|_\b' \sum_{j=0}^{k-1} \theta_j^{a_2-\a-1}
\leq C K_1 \| g \|_\b' \, \theta_k^{a_2-\a} \,.
\end{equation}
By \eqref{new.11}, \eqref{Olli.9}, \eqref{Olli.11}, and since $a_0 \leq a_1$, 
the bound \eqref{new.10} implies that
\[ 
\| S_{\theta_{k+1}} e_k \|_{a_2 + \b - \a} 
\leq C K_1 (K_1 + K_3) \| g \|_\b'^2 \,
\theta_{k+1}^p ( \theta_k^{a_1 + q - 2\a - 1} + \theta_k^{a_2 + 2a_1 - 3\a - 1} )
\] 
provided that $K_1 \| g \|_\b' \leq 1$. 
We assume that 
\begin{equation} \label{new.20}
K_1 \| g \|_\b' \leq 1.	
\end{equation}
Both the exponents $(a_1 + q - 2\a -1)$ and $(a_2 + 2a_1 - 3\a - 1)$
are $\leq (a_2 - \a - 1 - p)$ because $a_1 < \a$ and $a_1 + \b + \mu \leq 2\a$. 
Thus 
\begin{equation} \label{new.13}
\| S_{\theta_{k+1}} e_k \|_{a_2 + \b - \a} 
\leq C K_1 (K_1 + K_3) \| g \|_\b'^2 \, \theta_{k+1}^{a_2 - \a - 1}.
\end{equation}
Now we estimate $\| S_{\theta_{k+1}} e_k \|_0$. 
Since $a_0, \mu \leq a_1$, by \eqref{S1} and \eqref{new.9} we get
\begin{equation} \label{new.14}
\| S_{\theta_{k+1}} e_k \|_0 
\leq C \| e_k \|_0
\leq C (1 + \| u_k \|_\mu) 
( \| h_k \|_{a_1}^2 + \| h_k \|_{a_1} \| v_k - u_k \|_{a_1} ).
\end{equation}
By \eqref{Olli.9} and \eqref{new.20},
\begin{equation} \label{new.15}
\| u_k \|_\mu 
\leq \| u_k \|_{a_1}
\leq \sum_{j=0}^{k-1} \| h_j \|_{a_1} 
\leq K_1 \| g \|_\b' \sum_{j=0}^\infty \theta_j^{a_1-\a-1}
= C K_1 \| g \|_\b'
\leq C.
\end{equation}
We use \eqref{Olli.9}, \eqref{Olli.11} and \eqref{new.15} in \eqref{new.14},
and the bound $\theta_{k+1}^{2a_1 - 2\a - 1} \leq \theta_{k+1}^{-\b-1}$, 
to deduce that
\begin{equation} \label{new.16}
\| S_{\theta_{k+1}} e_k \|_0 
\leq C K_1 (K_1 + K_3) \| g \|_\b'^2 \, \theta_{k+1}^{-\b - 1}. 
\end{equation}
Using \eqref{S5} to interpolate between \eqref{new.13} and \eqref{new.16} we obtain
\begin{equation} \label{new.17}
\| S_{\theta_{k+1}} e_k \|_b 
\leq C K_1 (K_1 + K_3) \| g \|_\b'^2 \, \theta_{k+1}^{b -\b - 1}
\quad \forall b \in [0,a_2 + \b - \a]. 
\end{equation}

Now we estimate the other terms in $y_{k+1}$ (see \eqref{new.1}). 
By \eqref{Rj}, \eqref{new.9}, \eqref{Olli.9}, \eqref{Olli.11} and \eqref{new.7},
\begin{align} \label{new.18}
\sum_{i=0}^{k-1} \| R_k e_i \|_b
& \leq \sum_{i=0}^{k-1} C \theta_k^{b - a_2 + \mu - 1} \| e_i \|_{a_2 - \mu}
\notag \\ & 
\leq C K_1 (K_1 + K_3) \| g \|_\b'^2 \, \theta_k^{b - a_2 + \mu - 1}
\sum_{i=0}^{k-1} \theta_i^{a_1 + a_2 -2\a - 1}
\end{align}
for all $b \in [0, a_2 + \b - \a]$.
Since $a_1 + a_2 -2\a > 0$, we apply \eqref{new.7} to the last sum in \eqref{new.18}.
Then, recalling that $\theta_k / \theta_{k+1} \in [\frac12, 1]$, and using the bound 
$a_1 + \b + \mu \leq 2\a$, we deduce that 
\begin{align} \label{new.19}
\sum_{i=0}^{k-1} \| R_k e_i \|_b
\leq C K_1 (K_1 + K_3) \| g \|_\b'^2 \, \theta_{k+1}^{b - \b - 1} 
\quad \forall b \in [0, a_2 + \b - \a].
\end{align}
The sum of \eqref{new.17} and \eqref{new.19} completes the proof of \eqref{new.8}.

Now we are ready to prove \eqref{Olli.9} at $j=k+1$. 
By \eqref{S1} and \eqref{new.5} we have
$\| v_{k+1} \|_{a_1} \leq C \| u_{k+1} \|_{a_1} \leq C K_1 \| g \|_\b'$,
and we assume that $C K_1 \| g \|_\b' \leq \d_1$, so that $\Psi(v_{k+1})$ is defined.
By \eqref{Olli.8}, \eqref{tame in NM}, \eqref{Olli.7}, \eqref{new.8}, \eqref{Olli.10}
one has, for all $a \in [a_1, a_2]$,
\begin{equation} \label{new.21}
\| h_{k+1} \|_a \leq C \| g \|_\b' \{ 1 + (K_1 + K_3) K_1 \| g \|_\b' \} \, \theta_{k+1}^{a - \a - 1}
\end{equation}
provided that $K_2 \| g \|_\b' \leq 1$.
Bound \eqref{new.21} implies \eqref{Olli.9} provided that 
$C \{ 1 + (K_1 + K_3) K_1 \| g \|_\b' \} \leq K_1$.

The induction proof of \eqref{Olli.9}, \eqref{Olli.10}, \eqref{Olli.11} is complete if
$K_1, K_2, K_3, \| g \|_\b'$ satisfy 
\[ 
K_3 \geq C_0 K_1, \ \   
K_2 \geq C_0 K_1, \ \ 
C_0 K_1 \| g \|_\b' \leq 1, \ \ 
K_2 \| g \|_\b' \leq 1, \ \ 
C_0 \{ 1 + (K_1 + K_3) K_1 \| g \|_\b' \}	\leq K_1
\] 
where $C_0$ is the largest of the constants appearing above. 
First we fix $K_1 \geq 2 C_0$. 
Then we fix $K_2$ and $K_3$ larger than $C_0 K_1$, 
and finally we fix $\d_0 > 0$ such that the last three inequalities hold for all $\| g \|_\b' \leq \d_0$.
This completes the proof of \eqref{Olli.9}, \eqref{Olli.10}, \eqref{Olli.11}.

Bound \eqref{Olli.9} implies that the sequence $(u_k)$ converges 
in $E_a$ for all $a \in [0,\a)$. We call $u$ its limit.
Since $u = \sum_{j=0}^\infty h_j$ and each term $h_j$ satisfies \eqref{Olli.9}, 
it follows that $u \in E_\a'$ and $\| u \|_\a' \leq K_1 \| g \|_\b'$ 
by the definition of the norm in $E_\a'$. 

Finally, we prove the convergence of the Nash-Moser scheme. 
By \eqref{new.1} and \eqref{new.24} one proves by induction that
\[ 
\sum_{j=0}^k (e_j + y_j) = e_k + r_k, \quad \text{where} \ \ 
r_k := (I - S_{\theta_k}) \sum_{j=0}^{k-1} e_j,
\quad \forall k \geq 1.	
\] 
Hence, by \eqref{Olli.8} and \eqref{new.2}, 
recalling that $\Phi'(v_j) \Psi(v_j)$ is the identity map,
one has
\[ 
\Phi(u_{k+1}) - \Phi(u_0)
= \sum_{j=0}^{k}[\Phi(u_{j+1}) - \Phi(u_j)] 
= \sum_{j=0}^{k} (e_j + g_j + y_j) 
= G_k + e_k + r_k
\]
where $G_k := \sum_{j=0}^{k} g_j$. 
By \eqref{Olli.7}, $\| G_k - g \|_b \to 0$ as $k \to \infty$, for all $b \in [0,\b)$.
Let $a \in [a_1 - \mu, \a - \mu)$. 
By \eqref{new.5} and \eqref{new.20} we get $\| u_j \|_{a+\mu} \leq C$. 
By \eqref{new.9}, \eqref{Olli.9} and \eqref{Olli.11} we deduce that
\begin{equation} \label{new.26}
\| e_j \|_a \leq C K_1 (K_1 + K_3) \| g \|_\b'^2 \, \theta_j^{a_1 + a + \mu - 2\a - 1}.
\end{equation}
Hence $\| e_k \|_a \to 0$ as $k \to \infty$ because $a_1 + a + \mu - 2\a < 0$, 
and, moreover, $\sum_{j=0}^\infty \| e_j \|_a$ converges. 
By \eqref{S3} and \eqref{new.26}, for all $\rho \in [0,a)$ we have 
\begin{equation}
\| r_k \|_\rho 
\leq \sum_{j=0}^{k-1} \| (I - S_{\theta_k}) e_j	\|_\rho
\leq C \sum_{j=0}^{k-1} \theta_k^{\rho - a} \| e_j	\|_a
\leq C \theta_k^{\rho - a},
\end{equation}
so that $\| r_k \|_\rho \to 0$ as $k \to \infty$.
We have proved that $\| \Phi(u_k) - \Phi(u_0) - g \|_\rho \to 0$ as $k \to \infty$ 
for all $\rho$ in the interval $0 \leq \rho < \min \{ \a - \mu, \b \}$. 
Since $u_k \to u$ in $E_a$ for all $a \in [0,\a)$, 
it follows that $\Phi(u_k) \to \Phi(u)$ in $F_b$ for all $b \in [0,\a - \mu)$.
The theorem is proved.
\end{proof}

\section{Appendix C. Tame estimates}
\label{sec:tame}

In this appendix we recall classical tame estimates for products, compositions of functions 
and changes of variables which are repeatedly used in the paper. 
Recall the notation \eqref{i7} for functions $u(x)$, $x \in \T$, 
in the Sobolev space $H^s := H^s(\T,\R)$.

\begin{lemma} \label{lemma:tame basic}
Let $s_0, s_1, s_2, s$ denote nonnegative real numbers, with $s_0 > 1/2$.
There exist positive constants $C_s$, $s \geq s_0$, 
with the following properties.

\noindent
\emph{(Embedding and algebra)} 
For all $u,v \in H^{s_0}$, 
\begin{equation} \label{1605.2}
\| u \|_{L^\infty} \leq C_{s_0} \| u \|_{s_0}, \quad 
\| uv \|_{s_0} \leq C_{s_0} \| u \|_{s_0} \| v \|_{s_0}.
\end{equation}
\emph{(Interpolation)} 
For $0 \leq s_1 \leq s \leq s_2$ and $s = \lm s_1 + (1-\lm) s_2$, for all $u \in H^{s_2}$,
\begin{equation} \label{interpolation GN}
\| u \|_{s} \leq  \| u \|_{s_1}^\lm \| u \|_{s_2}^{1-\lm}. 
\end{equation}
\emph{(Tame product)} 
For $s \geq s_0$, for all $u,v \in H^s$, 
\begin{equation} \label{asymmetric tame product}
\| uv \|_s \leq C_{s_0} \|u\|_s \|v\|_{s_0} + C_s \|u\|_{s_0} \| v \|_s, 
\end{equation}
and for $s \in [0,s_0]$, for all $u \in H^{s_0}$, $v \in H^s$, 
\begin{equation} \label{1605.1}
\| uv \|_s \leq C_{s_0} \| u \|_{s_0} \|v\|_s.
\end{equation}
\end{lemma}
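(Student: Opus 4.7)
\medskip

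\noindent\textbf{Proof plan.} All five estimates are classical and follow directly from the Fourier-side definition \eqref{i7} of $H^s$ together with elementary inequalities. The single combinatorial fact I will use repeatedly is the elementary bound
\[
\langle n\rangle^{s} \leq C_s\bigl(\langle n-k\rangle^{s} + \langle k\rangle^{s}\bigr), \qquad n,k\in\Z,\ s\geq 0,
\]
which is immediate from $\langle n\rangle \leq \sqrt{2}\,\langle n-k\rangle\langle k\rangle$ combined with $\min(\langle n-k\rangle,\langle k\rangle)\geq 1$.

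\medskip

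\noindent\emph{Embedding and interpolation.} For \eqref{1605.2}, I would write $|u(x)|\leq \sum_n |u_n|$ and split $|u_n|=(\langle n\rangle^{s_0}|u_n|)\langle n\rangle^{-s_0}$; Cauchy--Schwarz then yields the bound, with constant $C_{s_0}^2=\sum_n\langle n\rangle^{-2s_0}$, which is finite exactly because $s_0>1/2$. For \eqref{interpolation GN}, I would apply H\"older on the Fourier side: writing $s=\lambda s_1+(1-\lambda)s_2$,
\[
\|u\|_s^2 = \sum_n \bigl(|u_n|^2\langle n\rangle^{2s_1}\bigr)^\lambda \bigl(|u_n|^2\langle n\rangle^{2s_2}\bigr)^{1-\lambda} \leq \|u\|_{s_1}^{2\lambda}\,\|u\|_{s_2}^{2(1-\lambda)},
\]
using H\"older with conjugate exponents $1/\lambda$ and $1/(1-\lambda)$; this gives \eqref{interpolation GN} with constant $1$.

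\medskip

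\noindent\emph{Tame product.} Starting from $(uv)_n=\sum_k u_{n-k}v_k$ and applying the elementary bound above,
\[
\langle n\rangle^s|(uv)_n|\leq C_s\sum_k \bigl(\langle n-k\rangle^s|u_{n-k}|\bigr)|v_k| + C_s\sum_k |u_{n-k}|\bigl(\langle k\rangle^s|v_k|\bigr).
\]
Taking $\ell^2_n$ norms and using Young's convolution inequality $\|a*b\|_{\ell^2}\leq\|a\|_{\ell^2}\|b\|_{\ell^1}$ on each piece, I reduce to controlling the $\ell^1$-norm of $(|u_n|)$ or $(|v_n|)$. The same Cauchy--Schwarz split as in the embedding step gives $\sum_n|v_n|\leq C_{s_0}\|v\|_{s_0}$ and analogously for $u$, proving \eqref{asymmetric tame product}. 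The algebra property in \eqref{1605.2} is then just the case $s=s_0$. For \eqref{1605.1} with $s\in[0,s_0]$, the same computation shows that the term containing $\|v\|_s$ dominates: keep the bound $\langle n\rangle^s\leq C_{s_0}\langle n-k\rangle^{s_0}$ (valid since $s\leq s_0$) on the first piece, and absorb the second piece via $\langle k\rangle^s\leq \langle k\rangle^{s_0}$, yielding $\|uv\|_s\leq C_{s_0}\|u\|_{s_0}\|v\|_s$.

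\medskip

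\noindent\emph{Main obstacle.} There is no real obstacle; the only point requiring care is to ensure the constant $C_{s_0}$ multiplying the $\|u\|_s\|v\|_{s_0}$ term in \eqref{asymmetric tame product} is independent of $s$, as this is what makes the estimate genuinely \emph{tame} and is used repeatedly in the Nash--Moser scheme. This is guaranteed by the argument above because the factor that depends on $s$ in the decomposition is always the one carrying the high-norm, while the $\ell^1$ bound on the low-norm factor uses only $s_0$.
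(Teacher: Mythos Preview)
Your arguments for the embedding, algebra, interpolation, and the tame product \eqref{asymmetric tame product} are correct and coincide with the paper's approach (Fourier series, Cauchy--Schwarz/H\"older, Young's convolution inequality). One minor quibble: the combinatorial bound $\langle n\rangle^s \leq C_s(\langle n-k\rangle^s + \langle k\rangle^s)$ carries a constant $C_s = 2^{s/2}$ that multiplies \emph{both} terms of the split, so your argument as written gives $C_s$ in front of $\|u\|_s\|v\|_{s_0}$ too, not $C_{s_0}$ as stated. This is harmless for the tameness needed in the Nash--Moser scheme, but your last paragraph overstates what the decomposition delivers.

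The genuine gap is in your treatment of \eqref{1605.1}. The inequality you invoke, $\langle n\rangle^s \leq C_{s_0}\langle n-k\rangle^{s_0}$, is simply false (take $k=n$). If instead you mean to bound $\langle n-k\rangle^s \leq \langle n-k\rangle^{s_0}$ on the first piece of your additive split, you are left with $\sum_k \langle n-k\rangle^{s_0}|u_{n-k}|\,|v_k|$, and Young then produces $\|u\|_{s_0}\|v\|_{\ell^1}$; but $\|v\|_{\ell^1}$ is not controlled by $\|v\|_s$ when $s \leq 1/2$. The second piece likewise cannot be ``absorbed'' by replacing $\langle k\rangle^s$ with $\langle k\rangle^{s_0}$, since that moves the norm on $v$ in the wrong direction. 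In short, the Young-convolution route does not reach \eqref{1605.1} for small $s$.

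The paper closes this gap differently: apply Cauchy--Schwarz in $k$ to $\langle n\rangle^s(uv)_n = \sum_k K(n,k)\,(\langle n-k\rangle^{s_0}u_{n-k})(\langle k\rangle^s v_k)$ with kernel $K(n,k)=\langle n\rangle^s\langle n-k\rangle^{-s_0}\langle k\rangle^{-s}$, and then use the uniform bound
\[
\sup_{n\in\Z}\ \sum_{k\in\Z}\frac{\langle n\rangle^{2s}}{\langle k\rangle^{2s}\langle n-k\rangle^{2s_0}} \leq C_{s_0}
\quad (0\leq s\leq s_0),
\]
proved by splitting into the regions $2|k|\leq |n|$ and $2|k|>|n|$. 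This yields $\|uv\|_s \leq C_{s_0}\|u\|_{s_0}\|v\|_s$ directly, with a constant depending only on $s_0$.
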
 

\begin{proof} 
The lemma can be proved by using Fourier series and H\"older inequality. 
Otherwise, for \eqref{interpolation GN} see, e.g., \cite{AG} (page 82)
or \cite{Moser-Pisa-66} (p.\,269); 
for \eqref{asymmetric tame product} adapt \cite{BBPro} (appendix)
or \cite{AG} (p.\,84).
For \eqref{1605.1} use the bound 
$\sum_{j \in \Z} \la n \ra^{2s} \la j \ra^{-2s} \la n-j \ra^{-2s_0} \leq C_{s_0}$ 
for all $n \in \Z$, all $0 \leq s \leq s_0$, which can be proved 
by splitting the two cases $2|j| \leq |n|$ and $2|j| > |n|$.
\end{proof}

A function $f : \T \times B \to \R$, where $B := \{ y \in \R^{p+1} : |y| < R \} $, 
induces the composition operator 
\begin{equation} \label{comp}
\tilde f(u)(x) := f(x, u(x), u'(x), u''(x), \ldots, u^{(p)}(x)) 
\end{equation}
where $u^{(k)}(x)$ denotes the $k$-th derivative of $u(x)$. 
Let $B_p$ be a ball in $W^{p,\infty}(\T,\R)$ such that, if $u \in B_p$, then 
the vector $(u(x), u'(x), \ldots, u^{(p)}(x))$ belongs to $B$ for all $x \in \T$.

\begin{lemma}[Composition of functions]
\label{lemma:tame cf}
Assume $ f \in C^r (\T \times B)$. 
Then, for all $ u \in H^{s+p} \cap B_p$, $s \in [0,r]$, 
the composition operator \eqref{comp} is well defined and 
\[
\| \tilde f(u) \|_s
\leq C \| f \|_{C^r} (\|u\|_{s+p} + 1) 
\]
where $C$ depends on $r,p$. 
If, in addition, $f \in C^{r+2}$, then, for $u,h \in H^{s+p}$ with 
$u, u+h \in B_p$, one has  
\begin{align*}
\big\| \tilde f(u+h) - \tilde f (u) \big\|_s 
& \leq C \| f \|_{C^{r+1}} \, ( \| h \|_{s+p} + \| h \|_{W^{p,\infty}} \| u \|_{s+p}) \,,  
\\
\big\| \tilde f(u+h) - \tilde f (u) - \tilde f'(u) [h] \big\|_s 
& \leq C \| f \|_{C^{r+2}} \, \| h \|_{W^{p,\infty}} 
(\| h \|_{s+p} + \| h \|_{W^{p,\infty}} \| u \|_{s+p}). 
\end{align*}
\end{lemma}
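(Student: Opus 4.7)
The plan is a standard application of the Faà di Bruno formula combined with the tame product and Gagliardo--Nirenberg interpolation estimates from Lemma \ref{lemma:tame basic}. I will first prove the $L^\infty$ boundedness of the composition on $B_p$ and the estimate for $\| \tilde f(u) \|_s$, then deduce the Lipschitz and quadratic remainder bounds by integration of the derivative.

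First, I would reduce the estimate for $\| \tilde f(u) \|_s$ to the case of integer $s$ by interpolating between two consecutive integers via \eqref{interpolation GN}; the sublinear term $+1$ absorbs the interpolation constants. For integer $s \in \{0,1,\ldots,r\}$, I would apply $\pa_x^s$ to $\tilde f(u)$ using the chain rule (Faà di Bruno), writing $\pa_x^s \tilde f(u)$ as a finite sum (with universal coefficients depending only on $s,p$) of terms of the schematic form
\[
(\pa^\alpha f)(x, u, u', \ldots, u^{(p)}) \, \prod_{j=1}^\ell u^{(\b_j)}, \qquad
|\alpha| + \sum_{j=1}^\ell (\b_j - p) \mathds{1}_{\b_j>p} \ \text{matching the total order } s,
\]
with $|\alpha|\leq s$, $1\leq \b_j \leq s+p$ and $\ell \leq s$. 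The precise combinatorial identity only matters to the extent that $\max_j \b_j \leq s+p$ and $\sum_j \b_j \leq s+p$ for some index in each summand; this is the standard structure of Faà di Bruno.

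Second, I would bound each such summand in $L^2$. The coefficient $(\pa^\alpha f)(x, u, \ldots, u^{(p)})$ is bounded pointwise by $\|f\|_{C^r}$ since $|\alpha|\leq s \leq r$ and $u \in B_p$. For the product of derivatives of $u$, I would apply iterated Gagliardo--Nirenberg \eqref{interpolation GN} and the tame product rule \eqref{asymmetric tame product}, \eqref{1605.1} to obtain
\[
\Big\| \prod_{j=1}^\ell u^{(\b_j)} \Big\|_{L^2}
\leq C \, \| u \|_{W^{p,\infty}}^{\ell-1} \, \| u \|_{s+p},
\]
where $C$ depends on $r,p$. Since $u \in B_p$ implies $\|u\|_{W^{p,\infty}} \leq R$ is bounded, summing over the finitely many terms in Faà di Bruno gives $\|\tilde f(u)\|_s \leq C \|f\|_{C^r}(\|u\|_{s+p}+1)$.

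Third, for the Lipschitz-type estimate I would use
\[
\tilde f(u+h) - \tilde f(u) = \int_0^1 \tilde f'(u+\theta h)[h]\, d\theta,
\qquad
\tilde f'(v)[h] = \sum_{k=0}^{p} (\pa_{z_k}f)(x, v, v', \ldots, v^{(p)}) \, h^{(k)},
\]
apply the first estimate to each $\pa_{z_k} f \in C^{r+1}$ composed with $v = u+\theta h$, and then use the tame product rule \eqref{asymmetric tame product} together with $\|v\|_{s+p} \leq \|u\|_{s+p}+\|h\|_{s+p}$. For the quadratic remainder I would use the analogous Taylor identity
\[
\tilde f(u+h) - \tilde f(u) - \tilde f'(u)[h] = \int_0^1 (1-\theta)\,\tilde f''(u+\theta h)[h,h]\, d\theta,
\]
where $\tilde f''(v)[h,h]$ is a bilinear combination (in the jet of $h$) of terms $(\pa_{z_j}\pa_{z_k}f)(x,v,\ldots,v^{(p)})\, h^{(j)}h^{(k)}$. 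Applying once more the first estimate to $\pa_{z_j}\pa_{z_k}f \in C^r$ and the tame product rule delivers the claimed bound with the extra factor $\|h\|_{W^{p,\infty}}$.

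The main obstacle is purely organizational: keeping the Faà di Bruno combinatorics under control so that only products whose top-norm factor appears \emph{linearly} arise, which is what makes the final estimate tame rather than merely continuous. Once the polynomial structure of $\pa_x^s \tilde f(u)$ is laid out, every other step is a mechanical application of Lemma \ref{lemma:tame basic}, so no genuinely new ideas are required beyond careful bookkeeping.
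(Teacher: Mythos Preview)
The paper does not actually prove this lemma: it simply cites Moser \cite{Moser-Pisa-66} and Rabinowitz \cite{Rabinowitz-tesi-1967} for integer $s$, and Alinhac--G\'erard \cite{AG} for non-integer $s$. Your Fa\`a di Bruno plus Gagliardo--Nirenberg argument is precisely the classical Moser--Rabinowitz proof for $s\in\N$, so on that part your proposal coincides with what the paper is implicitly invoking. The Taylor-expansion derivation of the Lipschitz and quadratic remainder bounds from the basic composition estimate is also standard and correct.

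There is, however, a genuine gap in your reduction to integer $s$. You claim that the non-integer case follows by interpolating via \eqref{interpolation GN} between $s_1=\lfloor s\rfloor$ and $s_2=\lceil s\rceil$. But interpolating the \emph{output} gives
\[
\|\tilde f(u)\|_s \leq \|\tilde f(u)\|_{s_1}^\lambda \|\tilde f(u)\|_{s_2}^{1-\lambda}
\leq C\|f\|_{C^r}\,(\|u\|_{s_1+p}+1)^\lambda(\|u\|_{s_2+p}+1)^{1-\lambda},
\]
and the right-hand side involves $\|u\|_{s_2+p}$ with $s_2>s$. The statement only assumes $u\in H^{s+p}$, so this norm may be infinite; and even for smooth $u$ there is no bound of the form $\|u\|_{s_2+p}\leq C\|u\|_{s+p}$. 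Linear interpolation of the target norm does not interpolate the nonlinear operator $u\mapsto\tilde f(u)$ in the way you need. This is exactly why the paper refers separately to \cite{AG} for $s\notin\N$: there the fractional case is handled via the Littlewood--Paley characterisation of $H^s$ (equivalently, paradifferential calculus or the finite-difference seminorm for the fractional part), not by naive interpolation. To complete your proof you should either carry out the Fa\`a di Bruno argument directly in the fractional seminorm, or invoke one of those references for the non-integer step.
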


\begin{proof}
For $s \in \N$ see \cite{Moser-Pisa-66} (p.\,272--275) 
and \cite{Rabinowitz-tesi-1967} (Lemma 7, p.\,202--203).
For $s \notin \N$ see \cite{AG} (Proposition 2.2, p.\,87).
\end{proof}


\begin{lemma}[Change of variable]  
\label{lemma:tame cv} 
Let $p \in W^{s,\infty}(\T,\R)$, 
$s \geq 1$, with $\| p \|_{W^{1,\infty}} \leq 1/2 $. 
Let $f(x) = x + p(x)$. 
Then $f$ is invertible, its inverse is $f\inv(y) = g(y) = y + q(y)$ where $q$ is $ 2 \pi $-periodic, $q \in W^{s,\infty}(\T,\R)$, and 
$\| q \|_{W^{s, \infty}} \leq C \| p \|_{W^{s, \infty}}$,
where $C$ depends on $d, s$. 

Moreover, if $u \in H^s(\T,\R)$, then $u \circ f(x) = u(x+p(x))$ also belongs to $H^s$, 
and 
\begin{equation}
\| u \circ f \|_s + \| u \circ g \|_s
\leq  C (\|u\|_s + \| p \|_{W^{s, \infty}} \|u\|_1).
\label{tame-cambio-di-variabile}
\end{equation}
\end{lemma}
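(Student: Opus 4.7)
My plan is to break the proof into three blocks: invertibility of $f$ and identification of $q$; $W^{s,\infty}$ bounds on $q$; and the tame $H^s$ estimate for composition.

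First I would establish that $f$ is a diffeomorphism of $\T$. Since $\|p'\|_{L^\infty} \leq 1/2$, we have $f'(x) = 1 + p'(x) \in [1/2, 3/2]$, so $f:\R\to\R$ is strictly increasing. The periodicity of $p$ gives $f(x+2\pi) = f(x) + 2\pi$, hence $f$ descends to a diffeomorphism of $\T$, and its inverse $g$ satisfies $g(y+2\pi)=g(y)+2\pi$, so $q(y):=g(y)-y$ is $2\pi$-periodic. From $g(y)+p(g(y))=y$ one reads off the crucial identity $q(y)=-p(g(y))$, which immediately yields $\|q\|_{L^\infty}=\|p\|_{L^\infty}$.

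Next I would estimate higher derivatives of $q$. Differentiating the implicit relation, $g'(y)=1/(1+p'(g(y)))$, and in particular $\|g'\|_{L^\infty}\leq 2$. Applying Fa\`a di Bruno to $q=-p\circ g$ and proceeding by induction on $k\leq s$, every derivative $q^{(k)}$ is a polynomial expression in $p^{(j)}\circ g$ (with $1\leq j\leq k$) and $q^{(i)}$ (with $1\leq i\leq k-1$), with coefficients controlled by powers of $(1+p')^{-1}$. Since $\|p'\|_{L^\infty}\leq 1/2$, each factor is bounded by a universal constant, and the inductive hypothesis gives $\|q\|_{W^{s,\infty}}\leq C_s\|p\|_{W^{s,\infty}}$. (Exactly the same mechanism yields the bound for $u\circ g$ in terms of $u\circ f$, so it suffices to treat $u\circ f$ below.)

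Finally, for the tame estimate \eqref{tame-cambio-di-variabile} I would first treat the case of integer $s$. The change of variable $y=f(x)$, $dy=(1+p'(x))\,dx$, combined with $1/2\leq 1+p'\leq 3/2$, yields $\|u\circ f\|_{L^2}\leq C\|u\|_{L^2}$. For the top derivative, Fa\`a di Bruno expands
\[
(u\circ f)^{(s)}(x) = u^{(s)}(f(x))\,(1+p'(x))^s + \sum_{\pi} C_\pi\, u^{(|\pi|)}(f(x))\prod_{B\in\pi,\,|B|\geq 2} p^{(|B|)}(x),
\]
where the sum runs over partitions $\pi$ of $\{1,\dots,s\}$ with at least one block of size $\geq 2$. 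The leading term is handled by another change of variable, giving $\|u^{(s)}\circ f\,(1+p')^s\|_{L^2}\leq C\|u\|_s$. Each remaining term is a product of $(u^{(j)}\circ f)$ with $j<s$ and derivatives $p^{(i)}$ of total order $s-j\geq 1$; applying Lemma \ref{lemma:tame basic}'s tame product together with interpolation (Gagliardo--Nirenberg) between $\|u\|_1$ and $\|u\|_s$, and between $\|p\|_{W^{1,\infty}}$ and $\|p\|_{W^{s,\infty}}$, yields a bound of the form $C(\|u\|_s+\|p\|_{W^{s,\infty}}\|u\|_1)$. For non-integer $s$, I would interpolate between consecutive integer indices, which preserves the tame form because the tame inequality is stable under real interpolation.

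The main obstacle is precisely the non-integer case: a direct Fa\`a di Bruno expansion is unavailable, so one must either interpolate between integer $s$ or appeal to the Littlewood--Paley characterization of $H^s$ and paralinearize the composition. I would choose the interpolation route, since the integer-$s$ estimate has exactly the bilinear tame shape that passes to fractional orders via \eqref{interpolation GN}. The whole argument is standard and can be quoted from \cite{AG} or \cite{Moser-Pisa-66}; the point of including it here is that the constants depend only on $\|p\|_{W^{1,\infty}}\leq 1/2$ and on $s$, a fact which is essential for the applications in Section \ref{sec:regu}.
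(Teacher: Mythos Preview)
Your treatment of invertibility, the identity $q=-p\circ g$, the inductive $W^{s,\infty}$ bound on $q$, and the integer-$s$ Fa\`a di Bruno argument is correct and is exactly what the paper's references to \cite{Baldi-Benj-Ono} and \cite{Hamilton} contain. For integer $s$ your sketch and the paper's proof (which is just a list of citations) coincide.

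The gap is the non-integer case. Your assertion that ``the tame inequality is stable under real interpolation'' is not justified, and the naive scheme you describe fails. Interpolating $\|u\circ f\|_s \leq \|u\circ f\|_{s_1}^\lambda \|u\circ f\|_{s_2}^{1-\lambda}$ between consecutive integers and inserting the tame bounds produces, among the cross terms, the factor $\|u\|_{s_1}^\lambda \|u\|_{s_2}^{1-\lambda}$; by \eqref{interpolation GN} this dominates $\|u\|_s$ from \emph{above}, not below, so it cannot be absorbed into $C\|u\|_s$. Equivalently, interpolating the linear operator $u\mapsto u\circ f$ between $H^{s_1}$ and $H^{s_2}$ only yields $\|u\circ f\|_s\leq C(1+\|p\|_{W^{s_2,\infty}})\|u\|_s$ with $s_2>s$, which is not tame in $p$. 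The paper handles fractional $s$ by a genuinely different mechanism: it studies the conjugate of the pseudo-differential operator $|D_x|^s$ under the change of variable, either via Egorov's theorem (\cite{Taylor}, \cite{ABH}) or via the asymptotic symbol calculus (\cite{AG}), obtaining the tame bound directly at each real $s$ without any interpolation step. The Littlewood--Paley/paralinearization route you mention and then discard is in fact the one that works here; the bare interpolation shortcut does not.
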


\begin{proof} 
For $s \in \N$ see, e.g., \cite{Baldi-Benj-Ono} (Lemma B.4 in the appendix),
where this lemma is proved by adapting \cite{Hamilton} (Lemma 2.3.6, p.\,149).
For $s \notin \N$ the lemma can be proved by studying the conjugate of the pseudo-differential operator $|D_x|^s$ by a change of variable, either by Egorov's Theorem, 
see \cite{Taylor} (ch.\,VIII, sec.\,1, p.\,150) and \cite{ABH} (appendix C, sec.\,C.1),
or by asymptotic formula, see \cite{AG} (Proposition 7.1, p.\,37).
\end{proof}

\begin{remark} For time-dependent functions $u(t,x)$, 
$u \in C([0,T], H^s(\T,\R))$, all the estimates of the present appendix hold 
with $\| u \|_s$ replaced by $\| u \|_{T,s} := \sup_{t \in [0,T]} \| u(t) \|_{s}$.
\end{remark}

{\footnotesize 

}

\bigskip

\begin{flushright}

Pietro Baldi, Giuseppe Floridia, Emanuele Haus 

\medskip

Dipartimento di Matematica e Applicazioni ``R. Caccioppoli''

Universit\`a di Napoli Federico II  

Via Cintia, 80126 Napoli, Italy

\medskip

\texttt{pietro.baldi@unina.it} 

\texttt{giuseppe.floridia@unina.it} 

\texttt{emanuele.haus@unina.it}
\end{flushright}


\begin{thebibliography}{10}

\bibitem{ACO} 
F. Alabau-Boussouira, J.-M. Coron, G. Olive, 
\emph{Internal controllability of first order quasilinear hyperbolic systems with a reduced number of controls}. (2015) https://hal.archives-ouvertes.fr/hal-01139980.

\bibitem{Alaz}
T. Alazard, \emph{Boundary observability of gravity water waves}. 
Preprint 2015 (arXiv:1506.08520). 

\bibitem{ABH}
T. Alazard, P. Baldi, D. Han-Kwan, \emph{Control of water waves}. 
J. Eur. Math. Soc. (JEMS), accepted (preprint arxiv:1501.06366).

\bibitem{AG} 
S. Alinhac, P. G\'erard, 
\emph{Pseudo-differential operators and the Nash-Moser theorem}. 
Grad. Studies in Math. 82. Amer. Math. Soc., Providence, RI, 2007. 

\bibitem{Baldi-Benj-Ono}
P. Baldi, 
\emph{Periodic solutions of fully nonlinear autonomous equations of Benjamin-Ono type}, 
Ann. Inst. H. Poincar\'e (C) Anal. Non Lin{\'e}aire 30 (2013), 33-77.

\bibitem{BBM-Airy}
P. Baldi, M. Berti, R. Montalto, 
\emph{KAM for quasi-linear and fully nonlinear forced perturbations of Airy equation}.
Math. Annalen 359 (2014), 471-536.

\bibitem{BBM-auto}
P. Baldi, M. Berti, R. Montalto,
\emph{KAM for autonomous quasi-linear perturbations of KdV}.
Ann. Inst. H. Poincar\'e Anal. Non Lin\'eaire, 
in press (available online august 2015).
   	
\bibitem{BBM-mKdV}
P. Baldi, M. Berti, R. Montalto,
\emph{KAM for autonomous quasi-linear perturbations of mKdV}.
Boll. Unione Mat. Ital. 9 (2016), 143-188.

\bibitem{BLR}
C. Bardos, G. Lebeau, J. Rauch,
\emph{Sharp sufficient conditions for the observation, control, 
and stabilization of waves from the boundary}.
SIAM J. Control Optim. 30 (1992), no.\,5, 1024-1065.

\bibitem{Beau1}
K. Beauchard,
\emph{Local controllability of a 1-D Schrodinger equation}.  
J. Math. Pures Appl. (9) 84 (2005), no.\,7, 851-956.

\bibitem{Beau2}
K. Beauchard,
\emph{Local controllability of a one-dimensional beam equation}. 
SIAM J. Control Optim. 47 (2008), no.\,3, 1219-1273.

\bibitem{BC-JFA-2006} 
K. Beauchard, J.-M. Coron, 
\emph{Controllability of a quantum particle in a moving potential well}.
J. Funct. Anal. 232 (2006), no.\,2, 328-389. 

\bibitem{BL} 
K. Beauchard, C. Laurent, 
\emph{Local controllability of 1D linear and nonlinear Schr\"odinger equations with bilinear control}. 
J. Math. Pures Appl. (9) 94 (2010), no.\,5, 520-554. 

\bibitem{BBPro}
M. Berti, Ph. Bolle, M. Procesi, 
\emph{An abstract Nash-Moser theorem with parameters and applications to PDEs}. 
Ann. Inst. H. Poincar\'e Anal. Non Lin\'eaire 27 (2010), no.\,1, 377-399. 

\bibitem{BCPro}
M. Berti, L. Corsi, M. Procesi, 
\emph{An abstract Nash-Moser theorem and quasi-periodic solutions for NLW and NLS 
on compact Lie groups and homogeneous manifolds}.
Comm. Math. Phys. 334 (2015), no.\,3, 1413-1454. 

\bibitem{BG} 
N. Burq, P. G\'erard, 
\emph{Condition n\'ecessaire et suffisante pour la contr\^olabilit\'e exacte des ondes}. 
C. R. Acad. Sci. Paris S\'er. I Math. 325 (1997), no.\,7, 749-752. 

\bibitem{BZ}
N. Burq, M. Zworski, 
\emph{Control for Schr{\"o}dinger operators on tori}. 
Math. Res. Lett. 19 (2012), no. 2, 309-324.

\bibitem{Coron}
J.-M. Coron, 
\emph{Control and Nonlinearity}.
Math. Surveys Monogr. 136, AMS, Providence, RI, 2007.

\bibitem{CGW}
J.-M. Coron, O. Glass, Z. Wang, 
\emph{Exact boundary controllability for 1-D quasilinear hyperbolic systems with a vanishing characteristic speed}. 
SIAM J. Control Optim. 48 (2009/10), no.\,5, 3105-3122.

\bibitem{Dehman-Lebeau}
B. Dehman, G. Lebeau,
\emph{Analysis of the HUM control operator and exact controllability for semilinear waves in uniform time}, SIAM J. Control Optim. 48 (2009), no.\,2, 521-550.

\bibitem{Ekeland}
I. Ekeland, 
\emph{An inverse function theorem in Fr\'echet spaces}. 
Ann. Inst. H. Poincar\'e Anal. Non Lin\'eaire 28 (2011), no.\,1, 91-105. 

\bibitem{ES} 
I. Ekeland, E. S\'er\'e,
\emph{An implicit function theorem for non-smooth maps between Fr\'echet spaces}.
Preprint 2015 (arXiv:1502.01561).

\bibitem{Gromov} 
M.L. Gromov, 
\emph{Smoothing and inversion of differential operators}. 
Mat. Sb. (N.S.) 88(130) (1972), 382-441. 

\bibitem{Guan-Kuksin}
H. Guan, S. Kuksin, 
\emph{The KdV equation under periodic boundary conditions and its perturbations}.
Nonlinearity 27 (2014), no.\,9, R61-R88. 

\bibitem{Hamilton}
R.S. Hamilton, 
\emph{The inverse function theorem of Nash and Moser}. 
Bull. Amer. Math. Soc. (N.S.) 7 (1982), no.\,1, 65-222. 

\bibitem{H Lacun} 
A. Haraux, 
\emph{S\'eries lacunaires et contr\^ole semi-interne des vibrations d'une plaque rectangulaire}.  
J. Math. Pures Appl. (9) 68 (1989), no.\,4, 457-465 (1990).

\bibitem{Geodesy} 
L. H{\"o}rmander, 
\emph{The boundary problems of physical geodesy}. 
Arch. Rational Mech. Anal. 62 (1976), no.\,1, 1-52.

\bibitem{Olli}
L. H{\"o}rmander, \emph{On the Nash-Moser implicit function theorem}.
Ann. Acad. Sci. Fenn. Ser. A I Math. 10 (1985), 255-259. 

\bibitem{H-90} 
L. H{\"o}rmander, 
\emph{The Nash-Moser theorem and paradifferential operators}.
Analysis, et cetera, 429-449, Academic Press, Boston, MA, 1990. 

\bibitem{Kahane} 
J.-P. Kahane, \emph{Pseudo-p\'eriodicit\'e et s\'eries de Fourier lacunaires}.
Ann. Sci. \'Ecole Norm. Sup., 79 (1962), 93-150.

\bibitem{KP}
T. Kappeler, J. P{\"o}schel,
\emph{KdV \& KAM}, Springer-Verlag, Berlin, 2003.

\bibitem{Komornik}
V. Komornik,
\emph{Exact controllability and stabilization}
RAM: Research in Applied Mathematics, The multiplier method,
Masson, Paris; John Wiley \& Sons, Ltd., Chichester, 1994.

\bibitem{Komornik-Loreti}
V. Komornik, P. Loreti. 
\emph{Fourier series in control theory}.
Springer Monog. in Math, Springer-Verlag, New York, 2005.
  		
\bibitem{Laurent}
C. Laurent,
\emph{Global controllability and stabilization for the nonlinear Schr{\"o}dinger equation on an interval}. ESAIM-COCV 16 (2010), 356-379.

\bibitem{LRZ}
C. Laurent, L. Rosier, B.-Y. Zhang,
\emph{Control and stabilization of the Korteweg-de Vries equation on a periodic domain}.
Comm. Partial Differential Equations 35 (2010), no.\,4, 707-744.

\bibitem{Li-Rao} T. Li, B. Rao, 
\emph{Exact boundary controllability for quasi-linear hyperbolic systems}, 
SIAM J. Control Optim. 41 (2003), no.\,6, 1748-1755. 

\bibitem{Li-Zhang}
T. Li, B.-Y. Zhang, 
\emph{Global exact controllability of a class of quasilinear hyperbolic systems}. 
J. Math. Anal. Appl. 225 (1998), no.\,1, 289-311.

\bibitem{Lions}
J.-L. Lions,
\emph{Exact controllability, stabilization and perturbations for distributed systems}.
SIAM Rev. 30 (1988), no.\,1, 1-68. 

\bibitem{Micu-Zuazua}
S. Micu, E. Zuazua,
\emph{An introduction to the controllability of partial differential equations}.
Quelques questions de th{\'e}orie du contr{\^o}le. In: Sari T. (ed.) 
Collection Travaux en Cours (2004), 69-157.

\bibitem{Moser}
J. Moser, 
\emph{A new technique for the construction of solutions of nonlinear differential equations}. 
Proc. Nat. Acad. Sci. U.S.A. 47 (1961), 1824-1831. 

\bibitem{Moser-Pisa-66}
J. Moser, 
\emph{A rapidly convergent iteration method and non-linear differential equations. I}. 
Ann. Scuola Norm. Sup. Pisa (3) 20 (1966), 265-315.

\bibitem{Nash}
J. Nash, 
\emph{The imbedding problem for Riemannian manifolds}. 
Ann. of Math. (2) 63 (1956), 20-63. 

\bibitem{Rabinowitz-tesi-1967}
P. Rabinowitz, 
\emph{Periodic solutions of nonlinear hyperbolic partial differential equations. I}.
Comm. Pure Appl. Math. 20 (1967), 145-205. 

\bibitem{RZ} 
L. Rosier, B.-Y. Zhang, 
\emph{Control and stabilization of the Korteweg-de Vries equation: recent progresses}. 
J. Syst. Sci. Complex. 22 (2009), no.\,4, 647-682.

\bibitem{Russell} 
D.L. Russell, 
\emph{Computational study of the Korteweg-de Vries equation with localized control action}.
Distributed Parameter Control Systems: New Trends and Applications, 
Lecture Notes in Pure and Appl. Math., Marcel Dekker, New York,
1991, 128: 195-203.

\bibitem{Russell-Zhang}
D.L. Russell, B.-Y. Zhang, 
\emph{Controllability and stabilizability of the third-order linear dispersion equation 
on a periodic domain}.
SIAM J. Control Optim. 31 (1993), no.\,3, 659-676. 

\bibitem{Taylor} 
M.E. Taylor, 
\emph{Pseudodifferential operators}.
Princeton Mathematical Series, 34. 
Princeton University Press, Princeton, N.J., 1981. 

\bibitem{Zehnder}
E. Zehnder, 
\emph{Generalized implicit function theorems with applications to some small divisor problems. I-II}. 
Comm. Pure Appl. Math. 28 (1975), 91-140; and 29 (1976), 49-111. 

\bibitem{Zhang} B.-Y. Zhang, 
\emph{Some results of nonlinear dispersive wave equations with applications to control}, 
Ph.D. thesis, University of Wisconsin-Madison, 1990.

\end{thebibliography}
\end{document}